\newtheorem{definition}{Definition}[section]
\newtheorem{theorem}[definition]{Theorem}
\newtheorem{lemma}[definition]{Lemma}
\newtheorem{proposition}[definition]{Proposition}
\theoremstyle{definition}
\newtheorem{remark}[definition]{Remark}
\newtheorem{note}[definition]{Note}
\newtheorem{example}[definition]{Example}
\newtheorem{notation}[definition]{Notation}
\newcommand{\diX}{\int^\oplus_{X}}
\newcommand{\dilX}{\int^{\oplus_\text{loc}}_{X_{\text{loc}}}}
\newcommand{\dmu}{\mathrm{d} \mu (p)}
\newcommand{\la}{\left\langle}
\newcommand{\ra}{\right\rangle}
\newcommand\up[1]{\mbox{\raisebox{1pt}{\ensuremath{#1}}}} 
\title[Direct Integral of Locally Hilbert Spaces Over a Locally Measure Space]{Direct Integral of Locally Hilbert Spaces Over a Locally Measure Space}
\author[Kulkarni]{Chaitanya J. Kulkarni}
\address{Chaitanya J. Kulkarni, Indian Institute of Science  Education and Research (IISER) Mohali, Knowledge City, S.A.S Nagar, Punjab 140306, India.}
\email{chaitanyakulkarni58@gmail.com}
\author[Pamula]{Santhosh Kumar Pamula}
\address{Santhosh Kumar Pamula, Indian Institute of Science  Education and Research (IISER) Mohali, Knowledge City, S.A.S Nagar, Punjab 140306, India.}
\email{santhoshkp@iisermohali.ac.in}
\subjclass{Primary (2020) 46A13; 46M40; 47L10; Secondary (2020) 46A03; 46C05}
\keywords{direct integrals, locally Hilbert spaces,  inductive limit, projective limit, locally von Neumann algebra, locally measure space, decomposable operators, diagonalizable operators}
\date{}
\begin{document}

\maketitle

\begin{abstract}
In this work, we introduce the concept of the direct integral of locally Hilbert spaces by generalizing the classical notion of a measure space to that of a locally measure space. We establish that the direct integral of a family of locally Hilbert spaces over a locally measure space forms a locally Hilbert space. We then define two important subclasses of locally bounded operators on such direct integrals, namely  decomposable locally bounded operators and diagonalizable locally bounded operators. We show that each of these subclasses forms a locally von Neumann algebra, and in particular, that the locally von Neumann algebra of diagonalizable operators is abelian. Finally, we prove that the locally von Neumann algebra of diagonalizable operators coincides with the commutant of the locally von Neumann algebra of decomposable operators.
   

\end{abstract}


\section{Introduction} \label{sec;Introduction}
The concept of the direct integral of Hilbert spaces extends the idea of the direct sum of Hilbert spaces by replacing the discrete index set used in direct sums with an appropriate measure space. The notion of the direct integral of Hilbert spaces corresponds to an abelian von Neumann algebra, known as the algebra of diagonalizable operators. Conversely, given a separable Hilbert space $\mathcal{H}$ and an abelian von Neumann algebra in the algebra of bounded operators $\mathcal{B}(\mathcal{H})$, one can obtain a measure space and a family of separable Hilbert spaces such that the given Hilbert space can be identified with the direct integral of Hilbert spaces, whereas the abelian von Neumann algebra can be identified with the algebra of diagonalizable operators (see \cite[Part II, Chapter 6, Theorem 2]{DixV}). This process is known as the disintegration of the Hilbert space. The disintegration of Hilbert spaces (see \cite{DJ, Wils1, Wils2}) plays a fundamental role in various areas of operator algebras, particularly in decomposing a representation of a separable $C^\ast$-algebra into irreducible components, or in decomposing a von Neumann algebra into factors  (see \cite{OB1, DixC, DixV, KR2, Tak1} for further details on this topic). 

In \cite{AD}, the concept of the (finite) direct sum of locally Hilbert spaces was introduced. In this article, we extend this notion by replacing the discrete measure space with a more general structure, referred to as a locally measure space. Based on this framework, we define the direct integral of locally Hilbert spaces over a locally measure space. In the special case where the underlying measure space is discrete, our construction coincides with the classical direct sum. 

To develop the concept of the direct integral of locally Hilbert spaces, we begin by introducing the notion of a locally measure space, which is defined by a projective system of measures over an inductive system of measurable spaces (see Definition \ref{def; lms}).  Using this framework, we define the direct integral of locally Hilbert spaces over a locally measure space and prove that it is again a locally Hilbert space. It is important to note that the structure of the locally measure space plays a crucial role, even when each locally Hilbert space is a Hilbert space, their direct integral over a locally measure space need not be a Hilbert space. 
Since the direct integral of locally Hilbert spaces over a locally measure space is a locally Hilbert space, we are naturally led to define two important classes of locally bounded operators: decomposable and diagonalizable locally bounded operators. These notions are inspired by the decomposable and diagonalizable operators on the direct integrals of Hilbert spaces. We show that each of these classes forms a locally von Neumann algebra, which we see by showing it as the projective limit of a projective system of von Neumann algebras. In particular, the locally von Neumann algebra of diagonalizable locally bounded operators is shown to be abelian. Furthermore, in certain cases, we establish that the locally von Neumann algebra of decomposable locally bounded operators coincides with the commutant of the locally von Neumann algebra of diagonalizable locally bounded operators. Throughout this article, we make use of inductive and projective limits of locally convex spaces, combined with the theory of direct integrals. For this purpose, we refer to the results presented in \cite{BGP, AD, AG, AI, MJ3, NCP}. We also employ techniques related to the theory of locally von Neumann algebras developed in \cite{MF, MJ1, MJ2}.

This article is divided into four sections. In Section \ref{sec;Preliminaries} we review key definitions and results from the theories of direct integrals of Hilbert spaces, locally $C^\ast$-algebras and locally von Neumann algebras. In Section \ref{sec; Direct integrals}, we introduce the concept of a locally measure space (Definition \ref{def; lms}) and provide an illustrative example (Example \ref{ex;lsms}). We then define the direct integral of locally Hilbert spaces over such spaces and show that it forms a locally Hilbert space (Proposition \ref{prop;dilhs}). Several examples are presented. In Example \ref{ex;dilhs direct sum and direct integral}, we show that this construction coincides with the usual direct sum when the underlying measure space is discrete. Furthermore, Examples \ref{ex;dilhs direct integral of C} and \ref{ex;dilhs direct integral of L2 R mu} illustrate that, even if each locally Hilbert space is a Hilbert space, their direct integral over a locally measure space need not be a Hilbert space. However, it forms a locally Hilbert space. In Section \ref{sec; Decomposable and Diagonalizable Locally Bounded Operators}, we introduce and study two new classes of locally bounded operators on the direct integral of locally Hilbert spaces, namely decomposable locally bounded operators and diagonalizable locally bounded operators (see Definition \ref{def;DecDiag(lbo)}). We provide examples for both classes. By definition, every diagonalizable locally bounded operator is decomposable. However, we give an example of a decomposable locally bounded operator that is not  diagonalizable (Example \ref{eg; Dec but not Diag 1} and \ref{eg; Dec but not Diag 2}). We also present an example of locally bounded operators that is not decomposable (Example \ref{eg; LBO but not Dec}). In Theorem \ref{thm;DEC and DIAG LvNA}, we prove that each of these classes forms a locally von Neumann algebra. Finally, we study the relationship between these locally von Neumann algebras and establish that they are commutant of each other in certain specific cases (Theorem \ref{thm; M DEC = M DIAG Commutant}).

\section{Preliminaries} \label{sec;Preliminaries}

\subsection{Direct integral of Hilbert spaces}
We begin by recalling a few fundamental definitions and results from the theory of direct integrals of Hilbert spaces. For a comprehensive treatment of this subject, the reader is referred to \cite{OB1, DixC, DixV, KR2, Tak1}. In particular, throughout this article, we follow the terminology and notational conventions presented in Chapter 14 of \cite{KR2}. 

\begin{definition}\cite[Definition 14.1.1]{KR2} \label{def;dihs}
If $X$ is a $\sigma$-compact locally compact (Borel measure) space, $\mu$ is the completion of a (positive) Borel measure on $X$, and $\{ \mathcal{H}_p \}_{p \in X}$ is a family of separable Hilbert spaces indexed by points $p$ in $X$, we say that a separable Hilbert space $\mathcal{H}$ is the direct integral of $\{ \mathcal{H}_p \}$ over $(X, \mu)$ \Big(we write: $\mathcal{H} = \diX \mathcal{H}_p \, \dmu$ \Big) when, to each $x \in \mathcal{H}$, there corresponds a function $ p \mapsto x(p)$ on $X$ such that $x(p) \in \mathcal{H}_p$ for each $p$ and 
\begin{enumerate}
\item $p \mapsto \la x(p), y(p) \ra$ is $\mu$-integrable, when $x, y \in \mathcal{H}$ and
\begin{align*}
\la x, y \ra = \int_X \la x(p), y(p) \ra \, \dmu
\end{align*}
\item if $x_p \in \mathcal{H}_p$ for all $p$ in $X$ and $p \mapsto \la x_p, y(p) \ra$ is integrable for each $y \in \mathcal{H}$, then there is a $x \in \mathcal{H}$ such that 
$x(p) = x_p$ for a.e. $p \in X$. 
\end{enumerate}
We say that $\diX \mathcal{H}_p \, \dmu$ and $p \mapsto x(p)$ are the (direct integral) decompositions of $\mathcal{H}$ and $x$ respectively.
\end{definition}


Given a measure space $(X, \mu)$, we denote by $\text{L}^\infty(X, \mu)$ the space of all essentially bounded measurable functions from $X$ to $\mathbb{C}$. We now recall the definitions of decomposable and diagonalizable bounded operators on the direct integral of Hilbert spaces.
\begin{definition}\cite[Definition 14.1.6]{KR2} \label{def;Debo}
Let $(X, \mu)$ be a measure space, and let  $\{ \mathcal{H}_p \}_{p \in X}$ be a family of separable Hilbert spaces with $\mathcal{H} = \diX \mathcal{H}_p \, \dmu$.
\begin{enumerate}
    \item \label{def;Decbo} An operator $T$ in $\mathcal{B}(\mathcal{H})$ is said to be {\it decomposable}, if there is a family  $\{ T_p \in \mathcal{B}(\mathcal{H}_p) \}_{p \in X}$ such that for each $x \in \mathcal{H}$, we have 
\begin{align*}
(Tx)(p) = T_p x(p) \; \; \; \text{for a.e.} \; \; p \in X.
\end{align*}
Subsequently, $T$ is denoted by $\int^\oplus_X T_p \, \dmu$. Moreover, the norm of $T$ is given by
\begin{equation} \label{eq;norm of T}
\| T \| = \text{ess sup} \big \{ \| T_p \| \; : \; p \in X \big \}.
\end{equation}
\item \label{def;Diagbo} An operator $T$ in $\mathcal{B}(\mathcal{H})$ is said to be {\it diagonalizable}, if $T$ is decomposable and there exists a function $f \in \text{L}^\infty(X, \mu)$ such that for each $x \in \mathcal{H}$, we have 
\begin{align*}
(Tx)(p) = f(p) x(p) \; \; \; \text{ for a.e.} \; \; p \in X.
\end{align*}
\end{enumerate} 
\end{definition}

The following theorem describes the structure of the set of all decomposable and diagonalizable operators on a direct integral of Hilbert spaces, and describes the relationship between these two classes of operators.

\begin{theorem}\cite[Theorem 14.1.10]{KR2} \label{thm;DeDibo vNA}
Let $\mathcal{H} = \diX \mathcal{H}_p \, \dmu$  be as in Definition \ref{def;Debo}. Then the set of all decomposable operators is a von Neumann algebra with the abelian commutant coinciding with the set of all diagonalizable operators.  
\end{theorem}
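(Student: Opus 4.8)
The plan is to verify the von Neumann algebra axioms for the set $\mathcal{D}$ of decomposable operators and the set $\mathcal{A}$ of diagonalizable operators, and then to establish $\mathcal{A} = \mathcal{D}'$, from which Theorem \ref{thm;DeDibo vNA} follows by the double commutant theorem. First I would check that $\mathcal{D}$ is a unital $*$-subalgebra of $\mathcal{B}(\mathcal{H})$: if $T = \int^\oplus_X T_p\,\dmu$ and $S = \int^\oplus_X S_p\,\dmu$, then $\alpha T + \beta S$, $TS$, and $T^*$ are decomposable with fibres $\alpha T_p + \beta S_p$, $T_p S_p$, and $T_p^*$ respectively (checked on a fixed $x \in \mathcal{H}$, using that the decomposition map $x \mapsto x(p)$ is linear and, for $T^*$, the formula $\langle T x, y\rangle = \int_X \langle T_p x(p), y(p)\rangle\,\dmu$ together with uniqueness of the decomposition of $T^* y$); the identity operator is $\int^\oplus_X I_{\mathcal{H}_p}\,\dmu$. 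The norm identity \eqref{eq;norm of T} is quoted from Definition \ref{def;Debo}. The same fibrewise computations show $\mathcal{A}$ is a unital $*$-subalgebra, and it is abelian because multiplication operators by scalar functions commute pointwise; note $f \mapsto \int^\oplus_X f(p)\,I_{\mathcal{H}_p}\,\dmu$ is a $*$-isomorphism of $\mathrm{L}^\infty(X,\mu)$ onto $\mathcal{A}$.

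Next I would prove the two inclusions $\mathcal{A} \subseteq \mathcal{D}'$ and $\mathcal{D}' \subseteq \mathcal{A}$. The first is immediate: a diagonalizable operator has fibres $f(p)I_{\mathcal{H}_p}$, which are scalars, hence commute with every $T_p \in \mathcal{B}(\mathcal{H}_p)$, so fibrewise $\int^\oplus f(p)\,\dmu$ commutes with every $\int^\oplus T_p\,\dmu$; since $\mathcal{D}$ contains in particular all diagonalizable operators, this also shows $\mathcal{A} \subseteq \mathcal{A}'$, i.e. $\mathcal{A}$ is abelian. The hard direction is $\mathcal{D}' \subseteq \mathcal{A}$, and this is the main obstacle. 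Here one takes $T \in \mathcal{D}'$; in particular $T$ commutes with all diagonalizable operators, i.e. with every multiplication operator $M_f$, $f \in \mathrm{L}^\infty(X,\mu)$. Using a countable generating family of measurable sets for the Borel structure on $X$ and the separability of $\mathcal{H}$, one shows that commuting with all $M_f$ forces $T$ itself to be decomposable with fibres $T_p$, and then commuting with the decomposable operators built from a countable dense set of fibre operators forces each $T_p$ to be a scalar $f(p)I_{\mathcal{H}_p}$ with $f \in \mathrm{L}^\infty(X,\mu)$. This is the standard but delicate measurable-selection argument (cf. \cite[Chapter 14]{KR2}); the subtleties are all in handling "almost everywhere" statements uniformly over an uncountable index set, which is where $\sigma$-compactness of $X$ and separability of the Hilbert spaces are essential.

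Finally, combining the pieces: $\mathcal{D}' = \mathcal{A}$ is a von Neumann algebra (it is a commutant), hence $\mathcal{D}'' = \mathcal{A}'$ is a von Neumann algebra; and $\mathcal{D} \subseteq \mathcal{D}''$ always, while the reverse inclusion $\mathcal{D}'' \subseteq \mathcal{D}$ follows once we know $\mathcal{D}'' = \mathcal{A}' \subseteq \mathcal{D}$ — the latter inclusion $\mathcal{A}' \subseteq \mathcal{D}$ being the statement that any operator commuting with all multiplication operators is decomposable, which was exactly the first half of the hard argument above. Thus $\mathcal{D} = \mathcal{D}''$ is a von Neumann algebra with commutant $\mathcal{D}' = \mathcal{A}$, the algebra of diagonalizable operators, which is abelian. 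I expect the routine algebraic closure properties and the norm formula to be quick, whereas the measurable-selection step identifying $\mathcal{D}'$ with scalar-valued multiplication operators will carry essentially all the technical weight; since this is \cite[Theorem 14.1.10]{KR2}, I would cite the argument there rather than reproduce it in full.
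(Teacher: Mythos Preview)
The paper does not supply its own proof of this theorem: it appears in the Preliminaries section as a result quoted verbatim from \cite[Theorem 14.1.10]{KR2}, with no argument given. Your proposal sketches the standard Kadison--Ringrose proof and ends by recommending exactly that citation, so your treatment is consistent with the paper's; there is nothing further to compare.
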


In this work, our goal is to develop appropriate notions such as direct integrals, decomposable operators, diagonalizable operators, and related constructions within the framework of locally Hilbert spaces. Before proceeding to these definitions, we first review the basic notations, terminology, and fundamental concepts from the theories of locally Hilbert spaces, locally C*-algebras and locally von Neumann algebras. The following definitions and ideas are primarily based on \cite{AG, MJ1}. For a more detailed exposition of these topics, the reader is referred to \cite{BGP, AD, MF, AG, AG2, AG3, AI, MJ1, MJ2, MJ3, NCP}.

\subsection{Locally Hilbert space} 
A locally Hilbert space is defined as the inductive limit of a strictly inductive system (also referred to as an upward filtered family) of Hilbert spaces. The formal definition is given below.

\begin{definition}\cite[Subsection 1.3]{AG} \label{def;sis}
Let $( \mathcal{H}_\alpha,\; \la \cdot, \cdot \ra_{\mathcal{H}_\alpha} )_{\alpha \in \Lambda}$ be a net of Hilbert spaces. Then $\mathcal{F} = \{\mathcal{H}_\alpha \}_{\alpha \in \Lambda}$ is said to be a strictly inductive system (or an upward filtered family) of Hilbert spaces if:
\begin{enumerate}
\item $(\Lambda, \leq)$ is a directed partially ordered set (poset);
\item for each $\alpha, \beta \in \Lambda$ with $\alpha \leq \beta \in \Lambda$ we have $\mathcal{H}_\alpha \subseteq \mathcal{H}_\beta$;
\item for each $\alpha, \beta \in \Lambda$ with $\alpha \leq \beta \in \Lambda$ the inclusion map $J_{\beta, \alpha} : \mathcal{H}_\alpha \rightarrow \mathcal{H}_\beta$ is isometric, that is,
$\la u, v \ra_{\mathcal{H}_\alpha} = \la u, v \ra_{\mathcal{H}_\beta}$ for all $u, v \in \mathcal{H}_\alpha$.
\end{enumerate}
\end{definition}

As shown in Equation (1.13) of \cite{AG}, for a strictly inductive system $\mathcal{F} = \{\mathcal{H}_\alpha \}_{\alpha \in \Lambda}$ of Hilbert spaces, the inductive limit denoted by $\varinjlim\limits_{\alpha \in \Lambda} \mathcal{H}_\alpha$ exists, and it is given by
\begin{equation} \label{eq;lhs}
\varinjlim_{\alpha \in \Lambda} \mathcal{H}_\alpha = \bigcup_{\alpha \in \Lambda} \mathcal{H}_\alpha.
\end{equation}

\begin{definition}\cite[Subsection 1.3]{AG} \label{def;lhs}
A locally Hilbert space $\mathcal{D}$ is defined as the inductive limit of some strictly inductive system $\mathcal{F} = \{\mathcal{H}_\alpha \}_{\alpha \in \Lambda}$ of Hilbert spaces. 
\end{definition}

It is worth noting that in the works \cite{BGP, AD}, the authors use the term quantized domain in place of what we refer to as a locally Hilbert space. In particular, the article \cite{BGP} adopts the notation $\{\mathcal{H}; \mathcal{F}; \mathcal{D} \}$ to denote a quantized domain, where $\mathcal{H}$ is the Hilbert space obtained as the completion of the locally Hilbert space $\mathcal{D}$ which is obtained by the inductive limit of the strictly inductive system $\mathcal{F}$. On the other hand, in \cite{MJ3}, the author uses the notation $\mathcal{D}_\mathcal{F}$. Throughout this work, we adopt the notation 
\begin{equation} \label{eq; quantized domain}
\big \{ \mathcal{H}; \mathcal{F} = \{\mathcal{H}_\alpha\}_{\alpha \in \Lambda}; \mathcal{D} \big \}    
\end{equation}
to represent a quantized domain, where the Hilbert space $\mathcal{H}$ is the completion of the locally Hilbert space $\mathcal{D}$ given by the inductive limit of the strictly inductive system  $\mathcal{F} = \{\mathcal{H}_\alpha\}_{\alpha \in \Lambda}$. Thus, we have 
\begin{equation*}
\mathcal{D} = \varinjlim_{\alpha \in \Lambda} \mathcal{H}_\alpha = \bigcup_{\alpha \in \Lambda} \mathcal{H}_\alpha \; \; \; \; \text{and} \; \; \; \; \overline{\mathcal{D}} = \mathcal{H}.
\end{equation*}
Moreover, if $\Lambda = \mathbb{N}$, then $\mathcal{E}$ is countable and in this case $\big \{ \mathcal{H}; \mathcal{F} = \{\mathcal{H}_\alpha\}_{\alpha \in \Lambda}; \mathcal{D} \big \}$ is called a \textit{quantized Fr\'echet domain}. For details, see Definition 2.3 of \cite{BGP}.


\begin{example} \label{ex;lhs}
Let $e_n$ be a sequence in $\ell^2(\mathbb{N})$, where the $n^\text{th}$ term of the sequence $e_n$ is 1 and 0 elsewhere. Then $\{e_{n}: n \in \mathbb{N}\}$ is a Hilbert basis of $\ell^2(\mathbb{N}).$ For each $k \in \mathbb{N}$, define the closed (in fact, finite dimensional) subspace  $\mathcal{H}_k := \text{span} \{ e_1, e_2,...,e_k \}$. It follows that $\mathcal{H}_{m} \subseteq \mathcal{H}_{n}$ for $m \leq n$. In other words, the family $\mathcal{F} = \big \{\mathcal{H}_n \big \}_{n \in \mathbb{N}}$ forms a strictly inductive system of Hilbert spaces. The inductive limit $\mathcal{D}$ of the strictly inductive system $\mathcal{F} = \big \{\mathcal{H}_n \big \}_{n \in \mathbb{N}}$ is given by $\mathcal{D} = \varinjlim\limits_{n \in \mathbb{N}} \mathcal{H}_n = \bigcup\limits_{n \in \mathbb{N}} \mathcal{H}_n$, and thus
\begin{align*}
\mathcal{D} = \big \{ x = (x_1, x_2, \cdots, ) \in \ell^2(\mathbb{N}) \; : \; x_n =0 \;\text{for all but finitely many}  \; n \in \mathbb{N} \big \}.
\end{align*}
Here $\mathcal{D}$ is the locally Hilbert space and $\overline{\mathcal{D}} = \ell^2(\mathbb{N})$. In other words, $\big \{ \ell^2(\mathbb{N}), \mathcal{F} = \{\mathcal{H}_n\}_{n \in \mathbb{N}}, \mathcal{D} \big \}$ is a quantized Frechet domain. For a more general construction, one can see Example 2.9 of \cite{BGP}. 
\end{example}

For the remainder of this section, unless stated otherwise, we assume that $(\Lambda, \leq)$ is a directed poset. We now recall the notion of a locally bounded operator between locally Hilbert spaces. Let $\big \{ \mathcal{H}; \mathcal{F} = \{\mathcal{H}_\alpha\}_{\alpha \in \Lambda}; \mathcal{D} \big \}$  and $\big \{ \mathcal{K}; \mathcal{G} = \{\mathcal{K}_\alpha\}_{\alpha \in \Lambda}; \mathcal{O} \big \}$  be two quantized domains. Consider the families $\{ P_\alpha : \mathcal{H} \rightarrow \mathcal{H}_\alpha \}_{\alpha \in \Lambda}$ and $\{ Q_\alpha : \mathcal{K} \rightarrow \mathcal{K}_\alpha\}_{\alpha \in \Lambda}$ of orthogonal projections. Suppose $\mathcal{L}(\mathcal{D}, \mathcal{O})$ denotes the collection of all densely defined (in $\mathcal{H}$) operators from $\mathcal{D}$ to $\mathcal{O}$, and in particular, $\mathcal{L}(\mathcal{D}, \mathcal{D}) = \mathcal{L}(\mathcal{D})$. Let us consider a subclass of $\mathcal{L}(\mathcal{D}, \mathcal{O})$ defined as   
\begin{equation*}
C_{\mathcal{F}, \mathcal{G}}(\mathcal{D}, \mathcal{O}) := \big \{ T \in \mathcal{L}(\mathcal{D}, \mathcal{O}) \; \; : \; \; TP_\alpha = Q_\alpha T P_\alpha \in \mathcal{B}(\mathcal{H}, \mathcal{K}) \; \; \text{for each} \; \alpha \in \Lambda \big \}.
\end{equation*}
Note that, if $T \in \mathcal{L}(\mathcal{D}, \mathcal{O})$, then $T \in C_{\mathcal{F}, \mathcal{G}}(\mathcal{D}, \mathcal{O})$ if and only if $T(\mathcal{H}_{\alpha}) \subseteq \mathcal{K}_{\alpha}$ and $T\big|_{\mathcal{H}_{\alpha}} \in \mathcal{B}(\mathcal{H}_{\alpha}, \mathcal{K}_\alpha)$ for each $\alpha \in \Lambda$ (refer \cite[Section 2.2]{GIS}). In particular, $C_{\mathcal{F}, \mathcal{F}}(\mathcal{D}, \mathcal{D}) = C_{\mathcal{F}}(\mathcal{D})$ (see \cite[Equation (3.1)]{AD}).  Consider a subclass of $C_{\mathcal{F}, \mathcal{G}}(\mathcal{D}, \mathcal{O})$ given by
\begin{equation*}
C^\ast_{\mathcal{F}, \mathcal{G}}(\mathcal{D}, \mathcal{O}) := \big \{ T \in C_{\mathcal{F}, \mathcal{G}}(\mathcal{D}, \mathcal{O}) \; \; : \; \; Q_\alpha T \subseteq T P_\alpha \; \; \text{for each} \; \alpha \in \Lambda \big \}.    
\end{equation*}
By following \cite[Section 2.1]{BGP}, we get that an operator $T \in \mathcal{L}(\mathcal{D}, \mathcal{O})$ belongs to $C^\ast_{\mathcal{F}, \mathcal{G}}(\mathcal{D}, \mathcal{O})$ if and only if $T(\mathcal{H}_{\alpha}) \subseteq \mathcal{K}_{\alpha}$,  $T(\mathcal{H}_{\alpha}^{\bot}\cap \mathcal{D}) \subseteq \mathcal{K}_{\alpha}^{\bot}\cap \mathcal{O}$ and  $T\big|_{\mathcal{H}_{\alpha}} \in \mathcal{B}(\mathcal{H}_{\alpha}, \mathcal{K}_\alpha)$ for each $\alpha \in \Lambda$ (also, see \cite[Section 2.2]{GIS}).

\begin{definition}\cite[Section 2.2]{GIS} \label{def;lbo}
Let  $\big \{ \mathcal{H}; \mathcal{F} = \{\mathcal{H}_\alpha\}_{\alpha \in \Lambda}; \mathcal{D} \big \}$  and $\big \{ \mathcal{K}; \mathcal{G} = \{\mathcal{K}_\alpha\}_{\alpha \in \Lambda}; \mathcal{O} \big \}$  be two quantized domains. Then an operator $T \in \mathcal{L}(\mathcal{D}, \mathcal{O})$ is said to be locally bounded if $T \in C^\ast_{\mathcal{F}, \mathcal{G}}(\mathcal{D}, \mathcal{O})$.
\end{definition}

\noindent
Now, we give an attention to $C^\ast_{\mathcal{F}, \mathcal{F}}(\mathcal{D}, \mathcal{D}) = C^\ast_{\mathcal{F}}(\mathcal{D})$ which can be seen as a (proper) subclass of $\mathcal{L}(\mathcal{D})$. Suppose $T \in C^\ast_\mathcal{F}(\mathcal{D})$, then $T$ has an unbounded dual $T^\bigstar$ satisfying  \begin{equation*}
\mathcal{D} \subseteq \text{dom}(T^\bigstar), \;  \;  \; T^\bigstar(\mathcal{D}) \subseteq \mathcal{D}.
\end{equation*}
If we define $T^\ast := T^\bigstar\big|_{\mathcal{D}}$, then it follows that the correspondence $T \mapsto T^\ast$ is an involution on $C^\ast_\mathcal{F}(\mathcal{D})$. Conversely, if $T \in \mathcal{L}(\mathcal{D})$  with
\begin{equation} \label{eqn; locally bounded operator on D}
T(\mathcal{H}_\alpha) \subseteq \mathcal{H}_\alpha; \; \; \; T^\ast(\mathcal{H}_\alpha) \subseteq \mathcal{H}_\alpha; \; \; \; T\big|_{\mathcal{H}_\alpha}, T^\ast\big|_{\mathcal{H}_\alpha} \in \mathcal{B}(\mathcal{H}_\alpha) \; \; \text{for each} \; \; \alpha \in \Lambda,
\end{equation}
then $T \in C^\ast_\mathcal{F}(\mathcal{D})$ \big(see \cite[Proposition 3.1]{AD} \big). As a result $C^\ast_\mathcal{F}(\mathcal{D})$ is unital $\ast$-algebra.

\begin{example} \label{ex;lbo}
Let $\big \{ \ell^2(\mathbb{N}), \mathcal{F} = \{\mathcal{H}_n\}_{n \in \mathbb{N}}, \mathcal{D} \big \}$ be the quantized Fr\'echet domain as given in Example \ref{ex;lhs}. Now, consider the linear operator $S$ in $\ell^2(\mathbb{N})$ given by the matrix 
\begin{align*}
S= \begin{bmatrix}
1      & 0      & 0      & \cdots & 0      & \cdots \\
0      & 2      & 0      & \cdots & 0      & \cdots \\
0      & 0      & 3      & \cdots & 0      & \cdots \\
\vdots & \vdots & \vdots & \ddots & \vdots & \vdots\\
0      & 0      & 0      & \cdots & m      &  \cdots\\
0      & 0      & 0      & \cdots & 0      & \ddots
\end{bmatrix}.
\end{align*}
with the domain 
\begin{align*}
\text{dom}(S) = \Big \{ \big \{x_n \big\}_{n \in \mathbb{N}} \in \ell^2(\mathbb{N}) \;\; \big| \;\; \sum^\infty_{n =1} n^2 | x_n |^2 < \infty \Big \}.
\end{align*}
Since the locally Hilbert space $\mathcal{D}$ consists of sequences with finite support, this gives the inclusion $\mathcal{D} \subset \text{dom}(S)$. Here the inclusion is proper because the sequence $\big \{\frac{1}{n^2} \big\}_{n \in \mathbb{N}} \in \text{dom}(S) \setminus \mathcal{D}$. Since for each $n \in \mathbb{N}$, we have $Se_n = n e_n$, we obtain $S(\mathcal{H}_n) \subseteq \mathcal{H}_n$ and $S(\mathcal{H}_{n}^{\bot}\cap \mathcal{D}) \subseteq \mathcal{H}_{n}^{\bot}\cap \mathcal{D}$. Moreover, $\big \| S\big|_{\mathcal{H}_n} \big \| = n$ for all $n \in \mathbb{N}$, that is, $S\big|_{\mathcal{H}_n} \in \mathcal{B}(\mathcal{H}_n)$. It follows from Equation \eqref{eqn; locally bounded operator on D} that $T := S\big|_{\mathcal{D}} \in C^\ast_\mathcal{F}(\mathcal{D})$. Equivalently, $T$ is a locally bounded operator on $\mathcal{D}$. 
\end{example}

\begin{remark}\cite[Subection 1.4]{AG} \label{rem;lbo}
Let  $\big \{ \mathcal{H}; \mathcal{F} = \{\mathcal{H}_\alpha\}_{\alpha \in \Lambda}; \mathcal{D} \big \}$  and $\big \{ \mathcal{K}; \mathcal{G} = \{\mathcal{K}_\alpha\}_{\alpha \in \Lambda}; \mathcal{O} \big \}$  be two quantized domains. Consider $T \in C^*_{\mathcal{F}, \mathcal{G}}(\mathcal{D}, \mathcal{O})$, and for each $\alpha \in \Lambda$ take 
$T_\alpha := T\big|_{\mathcal{H}_{\alpha}}$. Then $T_\alpha = Q_{\mathcal{K}_\alpha}T\big|_{\mathcal{H}_{\alpha}}$, where $Q_{\mathcal{K}_\alpha}$ denotes the orthogonal projection of $\mathcal{O}$ to $\mathcal{K}_\alpha$ (see \cite[Lemma 3.1]{AG3} for the existence of such projection). For a fixed $\alpha \in \Lambda$, we denote the inclusion maps by the notations $J_{\mathcal{D}, \alpha} : \mathcal{H}_\alpha \rightarrow \mathcal{D}$ and $J_{\mathcal{O}, \alpha} : \mathcal{K}_\alpha \rightarrow \mathcal{O}$. Then the collection $\{ T_\alpha \}_{\alpha \in \Lambda}$ satisfies the following properties:
\begin{enumerate}
\item for each $\alpha \in \Lambda$, $T_\alpha \in \mathcal{B}(\mathcal{H}_\alpha, \mathcal{K}_\alpha)$ and $T J_{\mathcal{D}, \alpha} = J_{\mathcal{O}, \alpha} T_\alpha$;
\item for $\alpha, \beta \in \Lambda$ with $\alpha \leq \beta$, we get $T^*_\beta\big|_{\mathcal{K}_{\alpha}} = T^*_\alpha$.
\end{enumerate}
\end{remark}
In view of Remark \ref{rem;lbo} and following the notations of \cite{AG}, we say that every $T \in C^{\ast}_{\mathcal{F}}(\mathcal{D})$ can be seen as a projective (or inverse) limit of the net $\{T_{\alpha}\}_{\alpha \in \Lambda}$ of bounded operators. That is, 
\begin{equation} \label{eq; inverese limit of bounded operators}
    T = \varprojlim\limits_{\alpha \in \Lambda} T_{\alpha}.
\end{equation}
We are now in a position to introduce the notion of a locally $C^{\ast}$-algebra. For a comprehensive reading of these algebras and related concepts such as local completely positive maps, the reader is referred to \cite{BGP, AD, AG, AG2, AI, MJ1, MJ2, MJ3, NCP} and the references therein.
\subsection{Locally $C^{\ast}$-algebra} Let $\mathcal{A}$ be a unital $\ast$-algebra. A seminorm $p$ on $\mathcal{A}$ is said to be a $C^{\ast}$-seminorm, if 
\begin{equation*}
p(a^\ast) = p(a), \; \; \; \; p(a^*a) = p(a)^2, \; \; \; \; p(ab) \leq p(a) p(b),
\end{equation*}
\noindent for all $a,b \in \mathcal{A}$. It is important to note that in \cite{ZS}, the author proved that the condition $p(a^*a) = p(a)^2$ implies $p(ab) \leq p(a) p(b)$ for all $a,b \in \mathcal{A}$. Let $\mathcal{P} := \{ p_\alpha \;  : \;   \alpha \in \Lambda \}$ be a family of $C^{\ast}$-seminorms defined on a the $\ast$-algebra $\mathcal{A}$. Then $\mathcal{P}$ is called a upward filtered family if for each $a \in \mathcal{A}$, we have  $p_\alpha(a) \leq p_\beta(a)$, whenever $\alpha \leq \beta$.

\begin{definition} \cite{AI} \label{def;lca}
A $*$-algebra $\mathcal{A}$ which is complete with respect to the locally convex topology generated by an upward filtered family $\{ p_\alpha \;  : \;  \alpha \in \Lambda \}$ of C*-seminorms is called a locally $C^{\ast}$-algebra. Further, $\mathcal{A}$ is called a unital $*$-algebra, if $\mathcal{A}$ has unit.
\end{definition}

It is well known that every locally $C^{\ast}$-algebra can be realized as the projective limit (or inverse limit) of a projective system (or inverse system) of $C^{\ast}$-algebras. The construction of such a projective system is detailed in \cite{BGP, AG, MJ1, NCP}. For the reader's convenience, we briefly recall the essential steps below. Let $\mathcal{A}$ be a locally $C^{\ast}$-algebra. Then for each $\alpha \in \Lambda$, take $\mathcal{I}_\alpha := \{ a \in \mathcal{A} \; : \; p_\alpha(a) = 0 \}$, which is a two-sided closed ideal in $\mathcal{A}$, then $\mathcal{A}_\alpha : = {\mathcal{A}}\!/\!{\mathcal{I}_\alpha}$ is a $C^{\ast}$-algebra with respect to the $C^{\ast}$-norm induced by $p_\alpha$ (refer \cite{CA, NCP} for more details). Whenever $\alpha \leq \beta$, since $P_\alpha(a) \leq p_\beta(a)$ for each $a \in \mathcal{A}$, there is a $C^\ast$-homomorphism (surjective) $\pi_{\alpha, \beta} : \mathcal{A}_\beta \rightarrow \mathcal{A}_\alpha$ given by $\pi_{\alpha, \beta}(a + \mathcal{I}_\beta) = a + \mathcal{I}_\alpha$. Thus, the pair  $\left ( \{ \mathcal{A}_\alpha \}_{\alpha \in \Lambda},  \{ \pi_{\alpha, \beta} \}_{\alpha \leq \beta}\right )$ forms a projective system of $C^{\ast}$-algebras. The projective limit (or inverse limit) of this system is given by (see Subsection 1.1 of \cite{AG}):
\begin{align}\label{eq;pl}
\varprojlim\limits_{\alpha \in \Lambda} \mathcal{A}_\alpha := \left \{ \{x_\alpha\}_{\alpha \in \Lambda} \in \prod_{\alpha \in \Lambda} \mathcal{A}_\alpha ~~ : ~~ \pi_{\alpha, \beta}(x_\beta) = x_\alpha, ~~ \text{whenever} ~~ \alpha \leq \beta \right \},
\end{align}
which is equipped with the weakest locally convex topology that makes each linear map $\pi_\beta : \varprojlim\limits_{\alpha \in \Lambda} \mathcal{A}_\alpha \rightarrow \mathcal{A}_\beta$ defined by $\pi_\beta(\{x_\alpha\}_{\alpha \in \Lambda}) := x_\beta$ is a continuous $\ast$-homomorphism. This topology is known as the projective limit topology. Since $\mathcal{A}_\alpha$ is complete for each $\alpha \in \Lambda$, the projective limit $\varprojlim\limits_{\alpha \in \Lambda} \mathcal{A}_\alpha$ is complete with respect to the projective limit topology \big (see Subsection 1.1 of \cite{AG} \big ). Moreover, the pair $\big(\varprojlim\limits_{\alpha \in \Lambda} \mathcal{A}_\alpha, \{ \pi_\alpha \}_{\alpha \in \Lambda} \big)$ is compatible with the projective system $\left ( \{ \mathcal{A}_\alpha \}_{\alpha \in \Lambda},  \{ \pi_{\alpha, \beta} \}_{\alpha \leq \beta} \right )$, that is, $\pi_{\alpha, \beta} \circ \pi_\beta = \pi_\alpha$, whenever  $\alpha \leq \beta$. 

Let $\mathcal{W}$ be a locally convex $\ast$-algebra along with the net of continuous $\ast$-homomorphisms given by $\big \{\psi_\alpha : \mathcal{W} \rightarrow \mathcal{A}_\alpha \big\}_{\alpha \in \Lambda}$. If the pair $(\mathcal{W}, \{ \psi_\alpha \}_{\alpha \in \Lambda})$ is compatible with the projective system $\left ( \{ \mathcal{A}_\alpha \}_{\alpha \in \Lambda},  \{ \pi_{\alpha, \beta} \}_{\alpha \leq \beta} \right )$, that is, 
\begin{equation}\label{eq;ps}
\pi_{\alpha, \beta} \circ \psi_\beta = \psi_\alpha    \; \text{whenever} \; \alpha \leq \beta, 
\end{equation}
then there always exists a unique continuous linear map $\Psi : \mathcal{W} \rightarrow \varprojlim\limits_{\alpha \in \Lambda} \mathcal{A}_\alpha$ satisfying $\psi_\alpha = \pi_\alpha \circ \Psi$ for each $\alpha \in \Lambda$.  In this sense, the projective limit $\big (\varprojlim\limits_{\alpha \in \Lambda} \mathcal{A}_\alpha, \{ \pi_\alpha \}_{\alpha \in \Lambda} \big)$ of the projective system $\left ( \{ \mathcal{A}_\alpha \}_{\alpha \in \Lambda},  \{ \pi_{\alpha, \beta} \}_{\alpha \leq \beta} \right )$ is uniquely determined. A reader is directed to \cite{AG, NCP} for more details.

Now observe that for each $\alpha \in \Lambda$, there is a canonical projection map $\phi_\alpha : \mathcal{A} \rightarrow \mathcal{A}_\alpha$  satisfying $\pi_{\alpha, \beta} \circ \phi_\beta = \phi_\alpha$, whenever $\alpha \leq \beta$. Equivalently, we get that the pair $(\mathcal{A}, \{ \phi_\alpha \}_{\alpha \in \Lambda})$ is compatible with the projective system  $\left ( \{ \mathcal{A}_\alpha \}_{\alpha \in \Lambda},  \{ \pi_{\alpha, \beta} \}_{\alpha \leq \beta} \right )$ \big (see Subsection 1.1 of \cite{AG} \big ). Moreover, 
the projective limit $\big(\varprojlim\limits_{\alpha \in \Lambda} \mathcal{A}_\alpha, \{ \phi_\alpha \}_{\alpha \in \Lambda} \big)$ of the projective system $\left ( \{ \mathcal{A}_\alpha \}_{\alpha \in \Lambda},  \{ \pi_{\alpha, \beta} \}_{\alpha \leq \beta} \right )$ can be identified with the locally $C^\ast$-algebra $\mathcal{A}$ and the identification is given by the continuous  bijective $\ast$-homomorphism $\phi : \mathcal{A} \rightarrow \varprojlim\limits_{\alpha \in \Lambda} \mathcal{A}_\alpha$ defined by $a \mapsto \{a + \mathcal{I}_\alpha\}_{\alpha \in \Lambda}$. 
By referring to the identification given by the map $\phi$, we write $\mathcal{A} = \varprojlim\limits_{\alpha \in \Lambda} \mathcal{A}_\alpha$. One may refer to \cite{AG, NCP} and references therein for more details.


\begin{remark} \label{rem;pl,sh} \cite{KS}
Let $\left ( \{ \mathcal{B}_\alpha \}_{\alpha \in \Lambda},  \{ \psi_{\alpha, \beta} \}_{\alpha \leq \beta} \right )$ 
be a projective system of $C^\ast$-algebras. Then the projective limit $\varprojlim\limits_{\alpha \in \Lambda} \mathcal{B}_\alpha$ is a locally $C^\ast$-algebra  and it is uniquely determined by defining as in Equation \eqref{eq;pl} 
\begin{equation*}
\varprojlim\limits_{\alpha \in \Lambda} \mathcal{B}_\alpha := \left \{ \{x_\alpha\}_{\alpha \in \Lambda} \in \prod_{\alpha \in \Lambda} \mathcal{B}_\alpha ~~ : ~~ \psi_{\alpha, \beta}(x_\beta) = x_\alpha, ~~ \text{whenever} ~~ \alpha \leq \beta \right \},
\end{equation*}
along with the continuous $\ast$-homomorphisms $\pi_\beta : \varprojlim\limits_{\alpha \in \Lambda} \mathcal{B}_\alpha \rightarrow \mathcal{B}_\beta$ defined as $\pi_\beta \big  (\{ x_\alpha \}_{\alpha \in \Lambda} \big  ) := x_\beta$ for every $\beta \in \Lambda$. 
\end{remark}

\begin{example} \cite[(1) of Example 1.4]{AG}\label{ex;lca}
Let  $\big \{ \mathcal{H}, \mathcal{F} = \{\mathcal{H}_\alpha\}_{\alpha \in \Lambda}, \mathcal{D} \big \}$ be a quantized domain. For each fixed $\beta \in \Lambda$, define the branch of $\Lambda$ determined by $\beta$ as $\Lambda_\beta = \{ \alpha \in \Lambda \;  :  \; \alpha \leq \beta \}$ . Then, with the induced order from $(\Lambda, \leq)$ the set $(\Lambda_\beta, \leq)$ is a directed poset (see \cite[Section 1.4]{AG}). For each $\beta \in \Lambda$ consider the quantized domain $\big \{ \mathcal{H}_\beta, \mathcal{F}_\beta = \{\mathcal{H}_\alpha\}_{\alpha \in \Lambda_\beta}, \mathcal{H}_\beta \big \}$. Then the $\ast$-algebra $C^*_{\mathcal{F}_\beta}(\mathcal{H}_\beta)$ is a $C^\ast$-subalgebra of $\mathcal{B}(\mathcal{H}_\beta)$ (see \cite[(1) of Example 1.4]{AG}). Now for $\alpha \leq \beta$, define a $\ast$-homomorphism 
\begin{align*}
\phi_{\alpha, \beta} : C^*_{\mathcal{F}_\beta}(\mathcal{H}_\beta) \rightarrow C^*_{\mathcal{F}_\alpha}(\mathcal{H}_\alpha) \; \; \; \text{as} \; \;  \; \phi_{\alpha, \beta}(S) = S \big|_{\mathcal{H}_\alpha},
\end{align*}
for every $S \in C^*_{\mathcal{F}_\beta}(\mathcal{H}_\beta)$. Then $\left ( \big \{ C^*_{\mathcal{F}_\alpha}(\mathcal{H}_\alpha) \big \}_{\alpha \in \Lambda},  \big \{\phi_{\alpha, \beta} \big \}_{\alpha \leq \beta} \right )$ forms a projective system of $C^{\ast}$-algebras. Then by following Equation (1.32) of \cite{AG}, we obtain a locally $C^\ast$-algebra given by
\begin{equation*}
C^*_\mathcal{F}(\mathcal{D}) = \varprojlim\limits_{\alpha \in \Lambda} C^*_{\mathcal{F}_\alpha}(\mathcal{H}_\alpha).
\end{equation*}
Moreover, by referring to Equation (1.33) of \cite{AG}, for each $\alpha \in \Lambda$, one can see that a seminorm defined by $p_\alpha : C^*_\mathcal{F}(\mathcal{D}) \rightarrow \mathbb{R}$ as $p_\alpha(T) := \big \| T\big|_{\mathcal{H}_\alpha} \big \|$ is a $C^\ast$-seminorm. Then the family $\{ p_\alpha \}_{\alpha \in \Lambda}$ of $C^\ast$-seminorms induces a locally convex topology on $C^*_\mathcal{F}(\mathcal{D})$. 
\end{example}

\subsection{Locally von Neumann algebra} In this subsection, we recall some results from the notion of locally von Neumann algebra. We begin with the definition of a locally von Neumann algebra as presented in \cite{MF}.

\begin{definition}\cite[Definition 1.1]{MF} \label{def; lva 1}
An algebra $\mathcal{M}$ is said to be a locally von Neumann algebra if $\mathcal{M}$ is the projective limit of some projective system of von Neumann algebras. 
\end{definition}

Next, we present an alternative approach to defining a locally von Neumann algebra, following the framework developed in \cite{MJ1}. Let  $\big \{ \mathcal{H}; \mathcal{F} = \{\mathcal{H}_\alpha\}_{\alpha \in \Lambda}; \mathcal{D} \big \}$  be a quantized domain. If $u, v \in \mathcal{D}$, then $u, v \in \mathcal{H}_\gamma$ for some $\gamma \in \Lambda$ and we define
\begin{equation*}
q_{u}(T) := \| Tu \|_{\mathcal{H}_\gamma} \; \; \text{and} \; \;q_{u,v}(T) := \left | \langle u, Tv \rangle_{\mathcal{H}_\gamma} \right | \; \; \text{for all} \; T \in C^*_\mathcal{F}(\mathcal{D}).
\end{equation*}
Thus $q_u$ and $q_{u, v}$ are $C^\ast$-seminorms. Then
\begin{enumerate}
\item[(a)] (SOT) {\it strong operator topology} on $C^*_\mathcal{F}(\mathcal{D})$ is the locally convex topology generated by the family $\{q_u ~~ : ~~ u \in \mathcal{D} \}$ of $C^\ast$-seminorms;
\item[(b)] (WOT) {\it weak operator topology} on $C^*_\mathcal{F}(\mathcal{D})$ is the locally convex topology generated by the family $\{q_{u,v} ~~ : ~~ u, v \in \mathcal{D} \}$ of $C^\ast$-seminorms.
\end{enumerate}
For a detailed introduction to locally von Neumann algebras, a reader is directed to \cite{MF, MJ1, MJ2}. In \cite{MJ1}, the author showed that, in the case of $\ast$-subslegrbras of $C^*_\mathcal{F}(\mathcal{D})$), Definition \ref{def; lva 1} coincides with the following definition of locally von Neumann algebras.

\begin{definition}\cite[Definition 3.7]{MJ1} \label{def; lva 2}
Let  $\big \{ \mathcal{H}; \mathcal{F} = \{\mathcal{H}_\alpha\}_{\alpha \in \Lambda}; \mathcal{D} \big \}$  be a quantized domain. Then a locally von Neumann algebra is a strongly closed unital locally $C^{\ast}$-algebra contained in $C^*_\mathcal{F}(\mathcal{D})$ (with the same unit as in $C^*_\mathcal{F}(\mathcal{D})$).
\end{definition}

Let $\mathcal{M} \subseteq C^*_\mathcal{F}(\mathcal{D})$. Consider the set $\mathcal{M}^\prime := \{ T \in C^*_\mathcal{F}(\mathcal{D}) ~~:~~ TS = ST ~~ \text{for all} ~~ S \in \mathcal{M} \}$ which is called as the commutant of $\mathcal{M}$ (see \cite{MJ1}). We denote $(\mathcal{M}^\prime)^\prime$ by the notation $\mathcal{M}^{\prime \prime}$. The following theorem proved in \cite{MJ1} is the double commutant theorem in the setting of locally von Neumann algebra. 

\begin{theorem}\cite[Theorem 3.6]{MJ1}
Let  $\big \{ \mathcal{H}; \mathcal{F} = \{\mathcal{H}_\alpha\}_{\alpha \in \Lambda}; \mathcal{D} \big \}$  be a quantized domain, and let $\mathcal{M} \subseteq C^*_\mathcal{F}(\mathcal{D})$ be a locally $C^{\ast}$-algebra containing the identity operator on $\mathcal{D}$. Then the following statements are equivalent:
\begin{enumerate}
\item $ \mathcal{M} = \mathcal{M}^{\prime \prime}$;
\item $\mathcal{M}$ is weakly closed;
\item $\mathcal{M}$ is strongly closed.
\end{enumerate}
\end{theorem}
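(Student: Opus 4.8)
We prove $(1)\Rightarrow(2)\Rightarrow(3)\Rightarrow(1)$. \emph{The soft implications.} For $(1)\Rightarrow(2)$, fix $S\in C^\ast_\mathcal{F}(\mathcal{D})$; the maps $T\mapsto TS$ and $T\mapsto ST$ are WOT-continuous on $C^\ast_\mathcal{F}(\mathcal{D})$ because $q_{u,v}(TS)=q_{u,Sv}(T)$ and $q_{u,v}(ST)=q_{S^\ast u,v}(T)$ for all $u,v\in\mathcal{D}$ (here one uses that each element of $C^\ast_\mathcal{F}(\mathcal{D})$ leaves every $\mathcal{H}_\gamma$ invariant and that $S^\ast|_{\mathcal{H}_\gamma}=(S|_{\mathcal{H}_\gamma})^\ast$). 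Hence every set $\{T:TS=ST\}$ is WOT-closed (the WOT is Hausdorff), so $\mathcal{M}'$ and therefore $\mathcal{M}''=(\mathcal{M}')'$ are WOT-closed; thus $\mathcal{M}=\mathcal{M}''$ forces $\mathcal{M}$ to be WOT-closed. For $(2)\Rightarrow(3)$, the estimate $q_{u,v}(T)\le\|u\|\,q_v(T)$ shows the SOT is finer than the WOT, so a WOT-closed set is SOT-closed.

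\emph{The main implication $(3)\Rightarrow(1)$.} Since $\mathcal{M}\subseteq\mathcal{M}''$ always and $\mathcal{M}$ is SOT-closed, it suffices to approximate a given $T\in\mathcal{M}''$ in the SOT by elements of $\mathcal{M}$. Fix $u_1,\dots,u_n\in\mathcal{D}$ and $\varepsilon>0$; by directedness of $\Lambda$ choose $\beta$ with $u_1,\dots,u_n\in\mathcal{H}_\beta$. Since every operator in $C^\ast_\mathcal{F}(\mathcal{D})$ maps $\mathcal{H}_\beta$ into itself, the whole problem takes place inside the Hilbert space $\mathcal{H}_\beta$, with $q_{u_i}(S-T)=\|(S|_{\mathcal{H}_\beta})u_i-(T|_{\mathcal{H}_\beta})u_i\|_{\mathcal{H}_\beta}$. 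Recall from Example \ref{ex;lca} that $C^\ast_\mathcal{F}(\mathcal{D})=\varprojlim_\alpha C^\ast_{\mathcal{F}_\alpha}(\mathcal{H}_\alpha)$ with the canonical projections given by restriction $R\mapsto R|_{\mathcal{H}_\alpha}$, and note that $C^\ast_{\mathcal{F}_\beta}(\mathcal{H}_\beta)$ is in fact a von Neumann algebra on $\mathcal{H}_\beta$, namely $\{Q_{\gamma,\beta}:\gamma\le\beta\}'$, where $Q_{\gamma,\beta}:\mathcal{H}_\beta\to\mathcal{H}_\gamma$ are the truncation projections. Put $\mathcal{N}_\beta:=\{S|_{\mathcal{H}_\beta}:S\in\mathcal{M}\}$, a unital $\ast$-subalgebra of $C^\ast_{\mathcal{F}_\beta}(\mathcal{H}_\beta)$, and run von Neumann's amplification argument in $\mathcal{B}(\mathcal{H}_\beta^{(n)})$: with $\xi=(u_1,\dots,u_n)$ and $K:=\overline{\mathcal{N}_\beta^{(n)}\xi}$, the orthogonal projection $P$ onto $K$ satisfies $\xi\in K$ (apply the unit, which lies in $\mathcal{N}_\beta$) and $P\in(\mathcal{N}_\beta^{(n)})'=M_n\big((\mathcal{N}_\beta)'\big)$ (since $\mathcal{N}_\beta$ is a $\ast$-algebra, $K^\perp$ is invariant as well). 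Provided $T|_{\mathcal{H}_\beta}$ lies in the bicommutant $(\mathcal{N}_\beta)''$ taken in $\mathcal{B}(\mathcal{H}_\beta)$, the operator $(T|_{\mathcal{H}_\beta})^{(n)}$ commutes with $P$, so $\big((T|_{\mathcal{H}_\beta})u_1,\dots,(T|_{\mathcal{H}_\beta})u_n\big)=P(T|_{\mathcal{H}_\beta})^{(n)}\xi\in K$; choosing $S\in\mathcal{M}$ whose restriction $S|_{\mathcal{H}_\beta}$ is close to that vector on $\xi$ gives $q_{u_i}(S-T)<\varepsilon$ for all $i$, which proves $(3)\Rightarrow(1)$.

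\emph{The main obstacle.} Everything above is routine except the fact used at the last step: that $T\in\mathcal{M}''$ (bicommutant inside $C^\ast_\mathcal{F}(\mathcal{D})$) implies $T|_{\mathcal{H}_\beta}\in(\mathcal{N}_\beta)''$ (bicommutant inside $\mathcal{B}(\mathcal{H}_\beta)$) for every $\beta$. Because the commutant is formed inside the non-full algebra $C^\ast_\mathcal{F}(\mathcal{D})$ rather than inside all bounded operators, this is a \emph{relative} double-commutant assertion, and the heart of the proof is to show that $\mathcal{N}\mapsto\mathcal{N}'$ is compatible with the projective system $C^\ast_\mathcal{F}(\mathcal{D})=\varprojlim_\alpha C^\ast_{\mathcal{F}_\alpha}(\mathcal{H}_\alpha)$: that $\mathcal{M}'|_{\mathcal{H}_\beta}$ equals the relative commutant $(\mathcal{N}_\beta)'\cap C^\ast_{\mathcal{F}_\beta}(\mathcal{H}_\beta)=\big(\mathcal{N}_\beta\cup\{Q_{\gamma,\beta}:\gamma\le\beta\}\big)'$, and that a second commutation recovers $(\mathcal{N}_\beta)''$. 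This relies on a lifting step — the connecting maps $C^\ast_{\mathcal{F}_{\beta'}}(\mathcal{H}_{\beta'})\to C^\ast_{\mathcal{F}_\beta}(\mathcal{H}_\beta)$ must restrict to surjections between the relative commutants, for which an extension argument exploiting injectivity of von Neumann algebras is natural — together with controlling how the truncation projections $Q_{\gamma,\beta}$ sit inside $\mathcal{N}_\beta$ (equivalently, how the coordinate projections $P_\gamma$ interact with $\mathcal{M}$). This local-to-global comparison of commutants is the genuinely nontrivial input; granting it, one has $\mathcal{M}''|_{\mathcal{H}_\beta}=\overline{\mathcal{N}_\beta}^{\,\mathrm{SOT}}$ by the classical double commutant theorem, and the assertion is reassembled through $\varprojlim$, so that in the end $\mathcal{M}$ is SOT-closed $\iff$ each $\mathcal{M}|_{\mathcal{H}_\alpha}$ is weakly closed $\iff$ $\mathcal{M}=\varprojlim_\alpha\mathcal{M}|_{\mathcal{H}_\alpha}=\varprojlim_\alpha(\mathcal{M}|_{\mathcal{H}_\alpha})''=\mathcal{M}''$.
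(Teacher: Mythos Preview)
The paper does not prove this theorem; it is quoted verbatim from \cite[Theorem 3.6]{MJ1} as background material in the preliminaries, with no proof given. So there is no ``paper's own proof'' to compare against.

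That said, your proposal is not a complete proof. You yourself flag the crucial point as ``the main obstacle'' and then write ``granting it'': you need that $T\in\mathcal{M}''$ (commutant taken inside $C^\ast_\mathcal{F}(\mathcal{D})$) forces $T|_{\mathcal{H}_\beta}\in(\mathcal{N}_\beta)''$ (commutant taken inside $\mathcal{B}(\mathcal{H}_\beta)$), and you do not establish this. The difficulty is real: an operator $A\in\mathcal{B}(\mathcal{H}_\beta)$ commuting with every $S|_{\mathcal{H}_\beta}$ need not extend to an element of $C^\ast_\mathcal{F}(\mathcal{D})$ commuting with every $S\in\mathcal{M}$, so there is no obvious map $(\mathcal{N}_\beta)'\to\mathcal{M}'|_{\mathcal{H}_\beta}$ in the direction you need. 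Your suggested remedy --- surjectivity of the connecting maps on relative commutants via injectivity of von Neumann algebras --- is a plausible line, but it needs an actual argument (for instance, a concrete extension of $A\in(\mathcal{N}_\beta)'\cap C^\ast_{\mathcal{F}_\beta}(\mathcal{H}_\beta)$ to an element of $\mathcal{M}'$), and you have not supplied one. Until that lifting step is written out, $(3)\Rightarrow(1)$ remains a sketch rather than a proof. If you want to pursue this route, consult Joi\c{t}\u{a}'s original paper \cite{MJ1}, where the interplay between $\mathcal{M}'$ and the levelwise commutants is worked out.
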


\section{Direct integral of locally Hilbert spaces} \label{sec; Direct integrals}
Motivated by the theory of direct integrals of Hilbert spaces, we propose an approach to define the notion of the direct integral of locally Hilbert spaces. To develop this concept, it is necessary to introduce a suitable analogue of a measure space considered in Definition \ref{def;dihs}. 

\subsection{Locally measure space}
To define the notion of locally measure space, we first recall the concept of a strictly inductive system of measurable spaces, as discussed in \cite[Section 4.1]{AG2}.

\begin{definition} \label{def;isms}
Let $\big (\Lambda, \leq \big )$ be a directed poset and let $\big \{(X_\alpha, \Sigma_\alpha) \big \}_{\alpha \in \Lambda}$ be a family of measurable spaces. We say that the family $\big \{(X_\alpha, \Sigma_\alpha) \big \}_{\alpha \in \Lambda}$ forms a strictly inductive system of measurable spaces, if 
\begin{enumerate}
\item $X_\alpha \subseteq X_\beta$;
\item $\Sigma_\alpha = \big\{ E \cap X_\alpha ~~ : ~~ E \in \Sigma_\beta \big \}$ (this implies $\Sigma_\alpha \subseteq \Sigma_\beta$),
\end{enumerate}
whenever $\alpha \leq \beta$.
\end{definition}

\noindent
Next, we recall the construction of the inductive limit in this context as given in \cite[Section 4.1]{AG2}. Suppose $\big \{(X_\alpha, \Sigma_\alpha) \big \}_{\alpha \in \Lambda}$ is a strictly inductive system of measurable spaces, then define 
\begin{equation} \label{eq;sigma algebra}
X := \bigcup\limits_{\alpha \in \Lambda} X_\alpha; \; \; \; \; \Sigma_{0}: =  \bigcup\limits_{\alpha \in \Lambda} \Sigma_\alpha  \; \; \; \; \text{and} \; \; \; \; \; \Sigma := \big \{ E \subseteq X ~~ : ~~ E \cap X_\alpha \in \Sigma_\alpha, ~~ \text{for all} ~~ \alpha \in \Lambda \big \}.
\end{equation}
In Proposition 4.1 of \cite{AG2}, it has been proved that $\Sigma_{0} \subseteq \Sigma$, and the collection $\Sigma$ is a $\sigma$-algebra. In the following example, we illustrate that $\Sigma_0$ is not necessarily a $\sigma$-algebra. 

\noindent

\begin{example} \label{Eg: sigma0}
Consider the family of measurable spaces $\big \{([-n, n], \Sigma_n) \big \}_{n \in \mathbb{N}}$, where $\Sigma_n$ denotes the $\sigma$-algebra of Lebesgue measurable subsets of $[-n, n]$. For each $n \in \mathbb{N}$, we have $[-n, n] \in \Sigma_n$. However, $\bigcup\limits_{n \in \mathbb{N}} [-n, n] = \mathbb{R} \notin \Sigma_0 = \bigcup\limits_{n \in \mathbb{N}} \Sigma_n$. This example demonstrates that $\Sigma_0$ is not necessarily a $\sigma$-algebra. 
\end{example}

\noindent
Next, we introduce the notion of projective system of measures. 

\begin{definition} \label{def;sisms}
Let $\big (\Lambda, \leq \big )$ be a directed poset and let $\big \{(X_\alpha, \Sigma_\alpha) \big \}_{\alpha \in \Lambda}$ be a strictly inductive system of measurable spaces. Suppose $\mu_\alpha$ is a positive measure on the measurable space $(X_\alpha, \Sigma_\alpha)$ for each $\alpha \in \Lambda$. Then the family $\{\mu_\alpha\}_{\alpha \in \Lambda}$ is said to be a \textit{projective system of measures}, if for each $E_\alpha \in \Sigma_\alpha$, we have
\begin{equation*}
\mu_\alpha(E_\alpha) = \mu_\beta(E_\alpha),  \; \; \text{whenever} \; \;  \alpha \leq \beta.
\end{equation*}
This implies that for every $E \in \Sigma$, we see that 
\begin{equation*}
\mu_\alpha(E \cap X_\alpha) = \mu_\beta(E \cap X_\alpha)
\leq \mu_\beta(E \cap X_\beta).
\end{equation*}
\end{definition}

\begin{proposition} \label{prop;m}
Suppose $\big \{(X_\alpha, \Sigma_\alpha, \mu_\alpha) \big \}_{\alpha \in \Lambda}$ is a family of measure spaces such that $\big \{(X_\alpha, \Sigma_\alpha) \big \}_{\alpha \in \Lambda}$ is a strictly inductive system of measurable spaces and $\{\mu_\alpha\}_{\alpha \in \Lambda}$ is a projective system of measures. If $X$ and $\Sigma$ be as defined in Equation \eqref{eq;sigma algebra},  then the map $\mu : \Sigma \rightarrow [0, \infty]$ defined by 
\begin{equation*}
\mu(E) := \begin{cases}
\lim\limits_\alpha \; \mu_\alpha(E \cap X_\alpha), & \text{if} \;\; \{ \mu_\alpha(E \cap X_\alpha) \}_{\alpha \in \Lambda} \; \text{converges};\\
& \\
\infty, & \text{otherwise}
\end{cases} 
\end{equation*} 
is a measure on $(X, \Sigma)$. 
\end{proposition}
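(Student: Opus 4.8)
The plan is to verify the two defining axioms of a measure for the map $\mu$: namely $\mu(\emptyset) = 0$ and countable additivity over disjoint unions in $\Sigma$. The first is immediate, since $\emptyset \cap X_\alpha = \emptyset$ for every $\alpha$, whence $\mu_\alpha(\emptyset \cap X_\alpha) = 0$ for all $\alpha$, so the net is constantly $0$, converges, and $\mu(\emptyset) = 0$. Before tackling additivity, I would first record a monotonicity observation: by the inequality noted in Definition \ref{def;sisms}, for a fixed $E \in \Sigma$ the net $\{\mu_\alpha(E \cap X_\alpha)\}_{\alpha \in \Lambda}$ is monotone increasing along $(\Lambda, \leq)$, because for $\alpha \leq \beta$ we have $\mu_\alpha(E \cap X_\alpha) = \mu_\beta(E \cap X_\alpha) \leq \mu_\beta(E \cap X_\beta)$. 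Consequently the net either converges to its supremum in $[0,\infty)$ or increases without bound; in either case one may write $\mu(E) = \sup_\alpha \mu_\alpha(E \cap X_\alpha)$, interpreting the supremum as $\infty$ when the net is unbounded (and noting this agrees with the ``otherwise'' clause, since a monotone net of reals converges iff it is bounded above). Recasting $\mu$ as a supremum is the key simplification, as suprema interact more cleanly with countable sums than limits of nets do.

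Next I would prove countable additivity. Let $\{E_n\}_{n \in \mathbb{N}}$ be pairwise disjoint members of $\Sigma$ with union $E \in \Sigma$ (recall $\Sigma$ is a $\sigma$-algebra by Proposition 4.1 of \cite{AG2}, so $E \in \Sigma$). For each fixed $\alpha$, the sets $\{E_n \cap X_\alpha\}_n$ are pairwise disjoint elements of $\Sigma_\alpha$ with union $E \cap X_\alpha$, so by countable additivity of the measure $\mu_\alpha$ we get
\begin{equation*}
\mu_\alpha(E \cap X_\alpha) = \sum_{n=1}^\infty \mu_\alpha(E_n \cap X_\alpha).
\end{equation*}
Taking the supremum over $\alpha \in \Lambda$ on the left gives $\mu(E)$. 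For the right-hand side, I would use the standard fact that for a net of functions on $\mathbb{N}$ that is monotone increasing in the net parameter and nonnegative, the supremum of the sums equals the sum of the suprema (this is a Tonelli/monotone-convergence type interchange for sums, valid in $[0,\infty]$ because all terms are nonnegative and, for each $n$, $\alpha \mapsto \mu_\alpha(E_n \cap X_\alpha)$ is monotone increasing by the observation above). This yields
\begin{equation*}
\sup_\alpha \sum_{n=1}^\infty \mu_\alpha(E_n \cap X_\alpha) = \sum_{n=1}^\infty \sup_\alpha \mu_\alpha(E_n \cap X_\alpha) = \sum_{n=1}^\infty \mu(E_n),
\end{equation*}
which completes the verification.

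The main obstacle I anticipate is the interchange of the supremum over the directed set $\Lambda$ with the infinite sum over $n$. When $\Lambda$ is an arbitrary directed poset rather than a sequence, one cannot simply invoke the monotone convergence theorem for sequences; instead one argues directly from the order-theoretic definition of supremum. One inequality, $\sum_n \sup_\alpha \mu_\alpha(E_n \cap X_\alpha) \geq \sup_\alpha \sum_n \mu_\alpha(E_n \cap X_\alpha)$, is routine by comparing partial sums termwise. For the reverse inequality one fixes a finite partial sum $\sum_{n=1}^N \mu(E_n)$, approximates each $\mu(E_n)$ from below by $\mu_{\alpha_n}(E_n \cap X_{\alpha_n})$ for suitable $\alpha_n$, uses directedness of $\Lambda$ to find a common upper bound $\alpha \geq \alpha_1, \dots, \alpha_N$, and invokes monotonicity to conclude $\sum_{n=1}^N \mu(E_n) - \varepsilon \leq \sum_{n=1}^N \mu_\alpha(E_n \cap X_\alpha) \leq \mu_\alpha(E \cap X_\alpha) \leq \mu(E)$; letting $N \to \infty$ and $\varepsilon \to 0$ finishes the argument. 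The case where some $\mu(E_n) = \infty$ or the sum diverges should be handled separately but is easy, since then both sides are $\infty$. A minor subtlety worth a remark is checking that $\mu$ is well-defined on all of $\Sigma$ (not just $\Sigma_0$) — but this is built into the definition via the ``otherwise'' clause, and the supremum reformulation makes it transparent that no case is left out.
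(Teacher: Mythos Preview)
Your proof is correct and follows the same overall strategy as the paper: verify $\mu(\emptyset)=0$, then for disjoint $\{E_n\}$ use countable additivity of each $\mu_\alpha$ to write $\mu_\alpha(E\cap X_\alpha)=\sum_n \mu_\alpha(E_n\cap X_\alpha)$ and interchange the passage over $\alpha$ with the sum over $n$. The one substantive difference is in how that interchange is justified. The paper simply invokes the monotone convergence theorem for the functions $f_\alpha(n)=\mu_\alpha(E_n\cap X_\alpha)$, treating $\lim_\alpha$ as though it were a sequential limit. You instead first observe that the net $\{\mu_\alpha(E\cap X_\alpha)\}_\alpha$ is monotone increasing (so that $\mu(E)=\sup_\alpha \mu_\alpha(E\cap X_\alpha)$ in all cases), and then argue the sup--sum interchange directly from directedness of $\Lambda$. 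This is a genuine improvement in rigor: the standard monotone convergence theorem is stated for sequences, and the paper does not address why it applies when $\Lambda$ is an arbitrary directed poset, whereas your supremum reformulation and the finite-partial-sum/common-upper-bound argument work for any directed $\Lambda$ without further comment.
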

\begin{proof}
Let $\emptyset$ denote the empty set of $X$. Then $\mu(\emptyset) = \lim\limits_\alpha \{ \mu_\alpha(\emptyset \cap X_\alpha) \} = 0.$ Further, assume that $\{ E_n \in \Sigma ~~ : ~~  n \in \mathbb{N} \}$ is a collection of pairwise disjoint subsets of $X$ in $\Sigma$. Then, we obtain
\begin{align*} 
\mu \big (\bigcup_{n \in \mathbb{N}} E_n \big ) &= \lim\limits_{\alpha} \; \mu_\alpha \big (\big (\bigcup_{n \in \mathbb{N}} E_n \big ) \bigcap X_\alpha \big ) \\
&= \lim\limits_{\alpha} \; \mu_\alpha  \big (\bigcup_{n \in \mathbb{N}} \left (E_n \bigcap X_\alpha \right ) \big) \\
&= \lim\limits_{\alpha} \; \sum_{n=1}^{\infty} \mu_\alpha \big (E_n \bigcap X_\alpha \big ) 
\end{align*}
and 
\begin{align*} 
\sum_{n=1}^{\infty} \mu(E_n) = \sum_{n=1}^{\infty} \lim\limits_{\alpha} \; \mu_\alpha \big  (E_n \bigcap X_\alpha \big).
\end{align*}

For each $\alpha \in \Lambda$, define a function $f_\alpha : \mathbb{N} \rightarrow [0, \infty)$ by $f_\alpha(n) := \mu_\alpha(E_n \cap X_\alpha).$ Whenever $\alpha \leq \beta$, we have $f_\alpha(n) = \mu_\alpha(E_n \cap X_\alpha) \leq f_\beta(n) = \mu_\beta(E_n \cap X_\beta)$ for all $n \in \mathbb{N}$. If we define a function $f : \mathbb{N} \rightarrow [0, \infty]$ by $f(n) := \mu(E_n)$, then for all $n \in \mathbb{N}$ we get $\lim\limits_\alpha f_\alpha(n) = f(n)$. By using the monotone convergence theorem, we have 
\begin{equation*}
\lim\limits_\alpha \sum\limits_{n=1}^{\infty} f_\alpha(n) = \sum\limits_{n=1}^{\infty} f(n).
\end{equation*}
This implies that
\begin{align*}
\lim\limits_\alpha \sum_{n=1}^{\infty} \mu_\alpha \left (E_n \bigcap X_\alpha \right ) = \lim\limits_\alpha \sum\limits_{n=1}^{\infty} f_\alpha(n) = \sum\limits_{n=1}^{\infty} f(n) &= \sum_{n=1}^{\infty} \mu (E_n) \\
&= \sum_{n=1}^{\infty} \lim\limits_\alpha \mu_\alpha \big (E_n \bigcap X_\alpha \big ) \\
&= \mu \big (\bigcup_{n \in \mathbb{N}} E_n \big ).
\end{align*}
This proves $\mu \big (\bigcup\limits_{n \in \mathbb{N}} E_n \big ) = \sum\limits_{n=1}^{\infty} \mu(E_n)$, and hence the map $\mu$ is a measure on $(X, \Sigma)$.
\end{proof}

\noindent
Now, we introduce the notion of locally measure space. 

\begin{definition} \label{def; lms}
We call the measure space $(X, \Sigma, \mu)$ obtained in Proposition \ref{prop;m} a locally measure space. 
\end{definition}

\begin{example} \label{ex;lsms}
Let $\Lambda = \mathbb{N}$ and $X_n = [-n, n]$ for each $n \in \mathbb{N}$. Suppose $B(X_n)$ denotes the Borel $\sigma$-algebra of $X_n$ and $\mu_n$ denotes the Lebesgue measure on $B(X_n)$. Then $\{(X_n, B(X_n), \mu_n)\}_{n \in \mathbb{N}}$ is a family of measure spaces such that $\{(X_n, B(X_n)\}_{n \in \mathbb{N}}$ is a strictly inductive system of measurable spaces and $\{\mu_n\}_{n \in \mathbb{N}}$ is a projective system of measures. As we know that a subset $U$ of $\mathbb{R}$ is Borel if and only if $U \cap X_n$ is Borel for every $n \in \mathbb{N}$. Moreover, if $\mu$ is the Lebesgue measure of $\mathbb{R}$ and $E \in B(\mathbb{R})$, then $E = \bigcup\limits_{n \in \mathbb{N}}(E \cap X_n)$, where 
\begin{equation*}
E \cap X_m \subseteq E \cap X_n, \; \; \text{whenever} \; \; m \leq n \; \; \text{and} \; \; \mu_n(E \cap X_n) = \mu (E \cap X_n) \; \; \text{for all} \; \; n \in \mathbb{N}.
\end{equation*} 
This implies that either $\mu(E) = \infty$ or $\mu(E) = \lim\limits_{n \to \infty} \mu (E \cap X_n) = \lim\limits_{n \to \infty} \mu_n (E \cap X_n)$. This shows that $(\mathbb{R}, B(\mathbb{R}), \mu)$ is a locally measure space.
\end{example}

\begin{note} \label{note; lms}
In the remaining part of this article, $(\Lambda, \leq)$ will denote a directed poset and the notation $(X, \Sigma, \mu)$ will always indicate a locally measure space associated with the family $\{ (X_\alpha, \Sigma_\alpha, \mu_\alpha)\}_{\alpha \in \Lambda}$ of measure spaces, where $X_\alpha$ is a $\sigma$-compact locally compact space, $\Sigma_\alpha$ is a Borel $\sigma$-algebra on $X_\alpha$ and  $\mu_\alpha$ is the completion of a positive Borel measure on $X_\alpha$ (see Definition \ref{def;dihs}), unless otherwise stated.
\end{note}


\subsection{Direct integral of locally Hilbert spaces}
Now we are in a position to propose the notion of direct integral of locally Hilbert spaces over a locally measure space.

\begin{definition} \label{Defn: directint_loc}
Let $(X, \Sigma, \mu)$ be a locally measure space (see Note \ref{note; lms}). For each $p \in X$,  assign a quantized domain $\big \{ \mathcal{H}_p; \mathcal{E}_p = \{\mathcal{H}_{\alpha, p}\}_{\alpha \in \Lambda}; \mathcal{D}_p \big \}$. Then the direct integral of locally Hilbert spaces $\{ \mathcal{D}_p \}_{p \in X}$ is, by definition, a space $\mathcal{D}$ collection of all maps $u : X \rightarrow \bigcup\limits_{p \in X} \mathcal{D}_p$ such that $u(p) \in \mathcal{D}_p$ for all $p \in X$ satisfying the following conditions: 
\begin{enumerate}
\item for each $u \in \mathcal{D}$ there exists $\alpha_u \in \Lambda$ such that $u(p) \in \mathcal{H}_{\alpha_u, p}$ for a.e. $p \in X_{\alpha_u}$  and 
\begin{equation*}
\text{supp}(u) := \{ p \in X \; : \; u(p) \neq 0_{\mathcal{D}_p}  \} \subseteq X_{\alpha_u};
\end{equation*}    
\item for any $u, v \in \mathcal{D}$, the function $\zeta_{u,v} : X \rightarrow \mathbb{C}$ defined by 
\begin{equation*}
\zeta_{u,v}(p) := \big \langle u(p), v(p) \big \rangle_{\mathcal{D}_p}, \; \; \text{for all}\; p \in X
\end{equation*}
is in  $\text{L}^1(X, \mu)$;
\item if $v : X \rightarrow \bigcup\limits_{p \in X} \mathcal{D}_p$ is such that there exists $\alpha \in \Lambda$ with  $v(p) \in \mathcal{H}_{\alpha, p}$ for all $p \in X_\alpha$ and $v(p) = 0_{\mathcal{D}_p}$ for all $p \in X \setminus X_{\alpha}$, and, in addition, for each $u \in \mathcal{D}$ the function $\zeta_{u,v} \in \text{L}^1(X, \mu)$, then $v \in \mathcal{D}.$  
\end{enumerate}
\end{definition}

\begin{notation} \label{noatation; direct integral}
We denote the collection $\mathcal{D}$ by $\displaystyle \dilX \mathcal{D}_p \, \dmu$ and read it as the direct integral of locally Hilbert spaces $\{ \mathcal{D}_p \}_{p \in X}$ over the locally measure space $(X, \Sigma, \mu)$. Here $``\oplus_{\text{loc}}"$ represents that the underlying spaces are locally Hilbert space, whereas $``X_{\text{loc}}"$ indicates that the measure space $(X, \Sigma, \mu)$ is a locally measure space as described in Note \ref{note; lms}. We denote an element $u \in \displaystyle \dilX \mathcal{D}_p \, \dmu$ by $\displaystyle \dilX u(p) \, \dmu$. In particular, in Note \ref{note; lms}, if each $X_\alpha = X$, then we denote $\mathcal{D}$ by $\displaystyle \int^{\oplus_\text{loc}}_{X} \mathcal{D}_p \, \dmu$ and $u \in \displaystyle \int^{\oplus_\text{loc}}_{X} \mathcal{D}_p \, \dmu$ by $\displaystyle \int^{\oplus_\text{loc}}_{X} u(p) \, \dmu$.    
\end{notation}

\begin{note} \label{note; V alpha star}
Consider a locally measure space $(X, \Sigma, \mu)$ and a family  $\big \{ \mathcal{H}_p; \mathcal{E}_p = \{\mathcal{H}_{\alpha, p}\}_{\alpha \in \Lambda}; \mathcal{D}_p \big \}_{p \in X}$ of quantized domains. Corresponding to any fixed $\alpha \in \Lambda$ and $x \in \int^\oplus_{X_\alpha} \mathcal{H}_{\alpha, p} \, \mathrm{d} \mu_\alpha (p)$, define a map $u_x : X \rightarrow \bigcup\limits_{p \in X} \mathcal{D}_p$ by
\begin{equation}\label{eq; ux defined with x}
u_x(p) := \begin{cases}
x(p), & \text{if} \;\; p \in X_\alpha;\\
0_{\mathcal{D}_p} & p \in X \setminus X_\alpha.
\end{cases}
\end{equation}
Then $u_x \in \displaystyle \dilX \mathcal{D}_p \, \dmu$. 
\end{note}


We use the construction of an element $u_x$ in $\displaystyle \dilX \mathcal{D}_p \, \dmu$ corresponding to a (arbitrarily) fixed $\alpha \in \Lambda$ and $x \in \int^\oplus_{X_\alpha} \mathcal{H}_{\alpha, p} \, \mathrm{d} \mu_\alpha (p)$  described in Note \ref{note; V alpha star} to show that the set $\displaystyle\dilX \mathcal{D}_p \, \dmu$ is indeed a locally Hilbert space.

\begin{proposition} \label{prop;dilhs}
Let $(X, \Sigma, \mu)$ be a locally measure space and $\big \{ \mathcal{H}_p; \mathcal{E}_p = \{\mathcal{H}_{\alpha, p}\}_{\alpha \in \Lambda}; \mathcal{D}_p \big \}_{p \in X}$ be a family of quantized domains. Then the set $\displaystyle \dilX \mathcal{D}_p \, \dmu$ of direct integral of locally Hilbert spaces $\{ \mathcal{D}_p \}_{p \in X}$ is a locally Hilbert space. 
\end{proposition}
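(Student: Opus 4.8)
The plan is to exhibit $\mathcal{D} := \dilX \mathcal{D}_p\,\dmu$ as the inductive limit of a strictly inductive system of Hilbert spaces, in the sense of Definitions \ref{def;sis} and \ref{def;lhs}. For each $\alpha \in \Lambda$ the natural candidate for the $\alpha$-th Hilbert space is the classical direct integral
\[
\mathcal{K}_\alpha := \int^\oplus_{X_\alpha} \mathcal{H}_{\alpha, p}\,\mathrm{d}\mu_\alpha(p),
\]
which makes sense because, by Note \ref{note; lms}, $(X_\alpha, \Sigma_\alpha, \mu_\alpha)$ is a $\sigma$-compact locally compact Borel measure space with completed measure, and each $\mathcal{H}_{\alpha, p}$ is a separable Hilbert space; so Definition \ref{def;dihs} applies and $\mathcal{K}_\alpha$ is a Hilbert space. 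Using Note \ref{note; V alpha star}, each $x \in \mathcal{K}_\alpha$ determines an element $u_x \in \mathcal{D}$ supported in $X_\alpha$; I would first check that the assignment $x \mapsto u_x$ is an injective linear map, and then \emph{identify} $\mathcal{K}_\alpha$ with its image
\[
\widetilde{\mathcal{K}}_\alpha := \{\, u \in \mathcal{D} \;:\; u(p) \in \mathcal{H}_{\alpha, p}\text{ a.e. on }X_\alpha,\ \mathrm{supp}(u) \subseteq X_\alpha \,\} \subseteq \mathcal{D},
\]
transporting the inner product $\langle u_x, u_y\rangle_{\mathcal{K}_\alpha} = \int_{X_\alpha}\langle x(p), y(p)\rangle\,\mathrm{d}\mu_\alpha(p)$. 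Condition (2) in Definition \ref{Defn: directint_loc} guarantees $\zeta_{u_x, u_y} \in \mathrm{L}^1(X,\mu)$, and since $\mathrm{supp}$ is contained in $X_\alpha$ and $\mu_\alpha = \mu|_{\Sigma_\alpha}$ on subsets of $X_\alpha$, this integral agrees with the $\mathcal{K}_\alpha$ inner product.

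\textbf{Key steps.} (i) Show $\mathcal{D} = \bigcup_{\alpha \in \Lambda} \widetilde{\mathcal{K}}_\alpha$: the inclusion $\supseteq$ is immediate from Note \ref{note; V alpha star}, while $\subseteq$ is exactly condition (1) of Definition \ref{Defn: directint_loc}, which says every $u \in \mathcal{D}$ has some $\alpha_u$ with $u(p) \in \mathcal{H}_{\alpha_u, p}$ a.e.\ and $\mathrm{supp}(u) \subseteq X_{\alpha_u}$, i.e.\ $u \in \widetilde{\mathcal{K}}_{\alpha_u}$. (ii) Show the family $\{\widetilde{\mathcal{K}}_\alpha\}_{\alpha \in \Lambda}$ is a strictly inductive system: for $\alpha \leq \beta$ we have $X_\alpha \subseteq X_\beta$, $\Sigma_\alpha \subseteq \Sigma_\beta$, $\mathcal{H}_{\alpha, p} \subseteq \mathcal{H}_{\beta, p}$ with isometric inclusion (each $\mathcal{E}_p$ is a strictly inductive system), and $\mu_\beta|_{\Sigma_\alpha} = \mu_\alpha$ (projective system of measures); one must verify that these conditions force $\widetilde{\mathcal{K}}_\alpha \subseteq \widetilde{\mathcal{K}}_\beta$ and that the inclusion is isometric, i.e.\ $\int_{X_\alpha}\langle x(p), y(p)\rangle_{\mathcal{H}_{\alpha,p}}\,\mathrm{d}\mu_\alpha(p) = \int_{X_\beta}\langle x(p), y(p)\rangle_{\mathcal{H}_{\beta,p}}\,\mathrm{d}\mu_\beta(p)$ whenever $x, y$ are supported in $X_\alpha$ — this follows because the integrand vanishes off $X_\alpha$, the pointwise inner products agree on $X_\alpha$ by isometry of $\mathcal{H}_{\alpha,p} \hookrightarrow \mathcal{H}_{\beta,p}$, and the two measures agree on $\Sigma_\alpha$. (iii) Invoke Equation \eqref{eq;lhs}: the inductive limit of this strictly inductive system exists and equals $\bigcup_\alpha \widetilde{\mathcal{K}}_\alpha = \mathcal{D}$, so $\mathcal{D}$ is a locally Hilbert space by Definition \ref{def;lhs}.

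\textbf{Points needing care.} The main obstacle is handling the almost-everywhere clauses and the completeness requirement for each $\mathcal{K}_\alpha$ correctly — in particular, showing that $\widetilde{\mathcal{K}}_\alpha$, defined as a subset of the function space $\mathcal{D}$, is genuinely (isomorphic to) the \emph{complete} Hilbert space $\int^\oplus_{X_\alpha}\mathcal{H}_{\alpha,p}\,\mathrm{d}\mu_\alpha(p)$ and not merely a dense subspace. This is where condition (3) of Definition \ref{Defn: directint_loc} does the work: it is precisely the analogue of condition (2) in Definition \ref{def;dihs} and ensures that every $\mathrm{L}^1$-pairable section supported on some $X_\alpha$ is already an element of $\mathcal{D}$, which is what one needs to conclude that $\widetilde{\mathcal{K}}_\alpha$ is closed under the $\mathcal{K}_\alpha$-limits — I would verify that if $x \in \int^\oplus_{X_\alpha}\mathcal{H}_{\alpha,p}\,\mathrm{d}\mu_\alpha(p)$ then the map $v = u_x$ satisfies the hypothesis of condition (3) (using that $\zeta_{u, u_x}$ is a product of an $\mathrm{L}^2$-section with an $\mathrm{L}^2$-section, hence $\mathrm{L}^1$, via Cauchy–Schwarz), so $u_x \in \mathcal{D}$, confirming $\mathcal{K}_\alpha \cong \widetilde{\mathcal{K}}_\alpha$ as Hilbert spaces. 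A secondary subtlety is that elements of $\mathcal{D}$ are honest functions rather than a.e.-equivalence classes; I would either quotient by the relation "$u \sim v$ iff $u(p) = v(p)$ for a.e.\ $p$" at the outset, or note that the seminorm $u \mapsto (\int \zeta_{u,u}\,\mathrm{d}\mu)^{1/2}$ becomes a norm after this identification and that all the inclusions above respect it.
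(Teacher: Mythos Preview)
Your proposal is correct and follows essentially the same approach as the paper: your $\widetilde{\mathcal{K}}_\alpha$ is exactly the paper's $\mathcal{H}_\alpha$ (Equation~\eqref{eq; H alpha}), your map $x \mapsto u_x$ is the inverse of the paper's unitary $V_\alpha$ (Equation~\eqref{eq;iso}), and your key steps (i)--(iii) mirror the paper's argument in the same order, including the use of condition~(3) of Definition~\ref{Defn: directint_loc} (via Note~\ref{note; V alpha star}) to obtain surjectivity/completeness. The a.e.\ and quotienting subtleties you flag are left at the same informal level in the paper's own proof.
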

\begin{proof}
To prove that $\displaystyle \dilX  \mathcal{D}_p \, \dmu$ is a locally Hilbert space, first we construct a strictly inductive system of Hilbert spaces. For each fixed $\alpha \in \Lambda$, we define the set $\mathcal{H}_\alpha$ by
\begin{equation}\label{eq; H alpha}
\mathcal{H}_\alpha := \left \{ u \in \dilX \mathcal{D}_p \, \dmu ~~ : ~~  u(p) \in \mathcal{H}_{\alpha, p} \; \; \text{for a.e.} \; \; p \in X_\alpha  \; \; \; \text{and} \; \;  \text{supp}(u) \subseteq X_\alpha \right \}.
\end{equation}
The set $\mathcal{H}_\alpha$ can be $\{ 0_{\mathcal{H}_\alpha} \}$ for some $\alpha \in \Lambda$. For instance, if the family $\{ \mathcal{D}_p \}_{p \in X}$ is such that for a fixed $\alpha \in \Lambda$, $\mathcal{H}_{\alpha, p} = \{ 0_{\mathcal{H}_{\alpha, p}}  \}$ for a.e. $p \in X_\alpha$, then we get that $\mathcal{H}_\alpha$ to be the zero space. On the other hand, let $\alpha \in \Lambda$ be fixed, and $E \subseteq X_\alpha$ be such that $0 < \mu (E) = \mu_\alpha(E) < \infty$ with $\mathcal{H}_{\alpha, p}$ is non-trivial Hilbert space for all $p \in E$. Then consider the family $\{ v_p \}_{p \in X}$, where $v_p$ is a unit vector in $\mathcal{H}_{\alpha, p} \subseteq \mathcal{D}_p$ if $p \in E$ and $v_p = 0_{\mathcal{D}_p}$ if $p \in X \setminus E$. Suppose $u : X \rightarrow \bigcup\limits_{p \in X} \mathcal{D}_p$ is a function satisfying the property (1) of Definition \ref{Defn: directint_loc} and $\zeta_{u,u} \in \text{L}^1(X, \mu)$, \big(equivalently, the map $p \mapsto \big \| 
u(p) \big \|_{\mathcal{H}_{{\alpha_u}, p}}$ is in $\text{L}^2(X, \mu)$ \big) then 
\begin{align*}
\int_{X} \; \big | \la u(p), v_p  \ra \big| \; \dmu &\leq \int\limits_{X_{\alpha_u} \bigcap E} \; \| u(p) \|_{\mathcal{H}_{{\alpha_u}, p}}\; \| v_p \|_{\mathcal{H}_{\alpha, p}}\; \dmu \\
&= \int\limits_{X_{\alpha_u} \bigcap E} \; \| u(p) \|_{\mathcal{H}_{{\alpha_u}, p}}  \; \mathrm{d} \mu_{\alpha_u} < \infty.
\end{align*}
The last inequality holds true as the map $p \mapsto \big \| 
u(p) \big \|_{\mathcal{H}_{{\alpha_u}, p}}$ when restricted to $X_{\alpha_u} \cap E$ is in $\text{L}^1(X_{\alpha_u}, \mu_{\alpha_u})$, as one may see that $\mu_{\alpha_u}(X_{\alpha_u} \cap E) < \infty$. So, from the property (3) of Definition \ref{Defn: directint_loc}, there exists $v \in \displaystyle \dilX  \mathcal{D}_p \, \dmu$ such that $v(p) = v_p$ if $p \in X_\alpha$ and $v(p) = 0_{\mathcal{D}_p}$ if $p \in X \setminus X_\alpha$. In particular, $0_{\mathcal{H}_\alpha} \neq v \in \mathcal{H}_\alpha$. Thus $\mathcal{H}_\alpha$ is non-trivial.

Let $u, v \in \mathcal{H}_\alpha$. Then define
\begin{equation} \label{eq;ip alpha}
\langle u, v \rangle_{\mathcal{H}_\alpha} : = \int\limits_{X_{\alpha}} \big \langle u(p),\; v(p) \big \rangle_{\mathcal{H}_{\alpha, p}} \; \mathrm{d} \mu_{\alpha}.
\end{equation} 
This gives an inner product on $\mathcal{H}_\alpha$. Whenever $\alpha \leq \beta$, it is clear that $\mathcal{H}_\alpha \subseteq \mathcal{H}_\beta$ and since $X_\alpha \subseteq X_\beta$ (see Definition \ref{def;isms}), for $u, v \in \mathcal{H}_\alpha$, we have
\begin{equation} \label{eq;ip alpha, beta}
\langle u, v \rangle_{\mathcal{H}_\beta} = \int\limits_{X_{\beta}} \big \langle u(p),\; v(p) \big \rangle_{\mathcal{H}_{\beta, p}} \mathrm{d} \mu_{\beta} 
= \int\limits_{X_{\alpha}} \big \langle u(p),\; v(p) \big \rangle_{\mathcal{H}_{\alpha, p}} \mathrm{d} \mu_{\alpha} + \int\limits_{X_\beta \setminus X_{\alpha}} \big \langle u(p),\; v(p) \big \rangle_{\mathcal{H}_{\beta, p}} \mathrm{d} \mu_{\alpha} 
= \langle u, v \rangle_{\mathcal{H}_\alpha}.
\end{equation}
Thus the inclusion map $J_{\beta, \alpha} : \mathcal{H}_\alpha \rightarrow \mathcal{H}_\beta$ is an isometry. Now for each $\alpha \in \Lambda$, we will show that the inner product space $\mathcal{H}_\alpha$ is complete by identifying it with the Hilbert space $\int^\oplus_{X_\alpha} \mathcal{H}_{\alpha, p} \, \mathrm{d} \mu_\alpha (p)$, which is the direct integral of the family $\{ \mathcal{H}_{\alpha, p} \}_{p \in X_\alpha}$ of Hilbert spaces over the measure space $\big (X_\alpha, \Sigma_\alpha, \mu_\alpha \big )$. To see this, define an operator $V_\alpha : \mathcal{H}_\alpha \rightarrow \int^\oplus_{X_\alpha} \mathcal{H}_{\alpha, p} \, \mathrm{d} \mu_\alpha (p)$ by 
\begin{equation} \label{eq;iso}
V_\alpha(u)(p) := u(p) \; \; \text{for all} \; \; p \in X_\alpha \; \; \text{and} \; \; u \in \mathcal{H}_\alpha.
\end{equation}
Then $V_\alpha$ is an isometry, that is,
\begin{equation*}
\big \| u \big \|_{\mathcal{H}_\alpha}^2 =
\int\limits_{X_{\alpha}} \big \langle u(p),\; u(p) \big \rangle_{\mathcal{H}_{\alpha, p}}\; \mathrm{d} \mu_{\alpha} = \int\limits_{X_{\alpha}} \big \langle V_\alpha(u)(p),\; V_\alpha(u)(p)\big \rangle_{\mathcal{H}_{\alpha, p}} \; \mathrm{d} \mu_{\alpha}  = \big \| V_\alpha(u) \big \|^2,
\end{equation*}
and for every $x \in \int^\oplus_{X_\alpha} \mathcal{H}_{\alpha, p} \, \mathrm{d} \mu_\alpha (p)$ we have $V_\alpha(u_x) = x$, where $u_x$ is defined as in Equation \eqref{eq; ux defined with x}. This shows that $V_\alpha$ is surjective. Hence, the map $V_\alpha$ defines an isomorphism between the inner product space $\mathcal{H}_\alpha$ and the Hilbert space $\int^\oplus_{X_\alpha} \mathcal{H}_{\alpha, p} \, \mathrm{d} \mu_\alpha (p)$. Since $\alpha \in \Lambda$ is arbitrary, each $\mathcal{H}_\alpha$ is a Hilbert space. Thus, $\{ \mathcal{H}_\alpha \}_{\alpha \in \Lambda}$ forms a strictly inductive system of Hilbert spaces and from Equation \eqref{eq;lhs}, we get that 
\begin{equation} \label{eq;il=u}
\varinjlim\limits_{\alpha \in \Lambda} \mathcal{H}_\alpha = \bigcup\limits_{\alpha \in \Lambda} \mathcal{H}_\alpha.
\end{equation} 

Finally, to prove the result, we show that $\displaystyle \dilX \mathcal{D}_p \, \dmu$ is the inductive limit of the strictly inductive system $\{ \mathcal{H}_\alpha \}_{\alpha \in \Lambda}$ of Hilbert spaces. Clearly, $\bigcup\limits_{\alpha \in \Lambda} \mathcal{H}_\alpha \subseteq \displaystyle \dilX \mathcal{D}_p \, \dmu$ by Equation \eqref{eq; H alpha}. If $u \in \displaystyle \dilX \mathcal{D}_p \, \dmu$, then from the property (1) of Definition \ref{Defn: directint_loc} there exists $\alpha_u \in \Lambda$ such that  $u(p) \in \mathcal{H}_{\alpha_u, p}$ for a.e. $p \in X_{\alpha_u}$ and $\text{supp}(u) \subseteq X_{\alpha_u}$. This implies that $u \in \mathcal{H}_{\alpha_{u}}$. Hence, we get $\displaystyle \dilX \mathcal{D}_p \, \dmu \subseteq \bigcup\limits_{\alpha \in \Lambda} \mathcal{H}_\alpha$, and this proves 
\begin{equation} \label{eq;dilhs=u}
\dilX \mathcal{D}_p \, \dmu = \bigcup\limits_{\alpha \in \Lambda} \mathcal{H}_\alpha.
\end{equation}
Then Equation \eqref{eq;il=u} and Equation \eqref{eq;dilhs=u} imply that 
\begin{equation} \label{eq;dilhs=il}
\dilX \mathcal{D}_p \, \dmu = \varinjlim\limits_{\alpha \in \Lambda} \mathcal{H}_\alpha.
\end{equation} 
Therefore, from Definition \ref{def;lhs}, we conclude that $\displaystyle \dilX \mathcal{D}_p \, \dmu$ is a locally Hilbert space.
\end{proof}


\begin{remark} \label{rem; quntized domain}
Let $(X, \Sigma, \mu)$ be a locally measure space and $\big \{ \mathcal{H}_p; \mathcal{E}_p = \{\mathcal{H}_{\alpha, p}\}_{\alpha \in \Lambda}; \mathcal{D}_p \big \}_{p \in X}$ be a family of quantized domains. By following Proposition \ref{prop;dilhs}, we know that the set $\displaystyle \dilX \mathcal{D}_p \, \dmu$  forms a locally Hilbert space. Let $\overline{ \displaystyle \dilX \mathcal{D}_p \, \dmu}$ be the Hilbert space completion of the locally Hilbert space $\displaystyle \dilX \mathcal{D}_p \, \dmu$. Then we obtain 
\begin{equation*}
\left \{ \overline{\dilX \mathcal{D}_p \, \dmu}; \mathcal{E} = \{\mathcal{H}_{\alpha}\}_{\alpha \in \Lambda}; \dilX \mathcal{D}_p \, \dmu \right \},
\end{equation*}
as a quantized domain, where for each $\alpha \in \Lambda$, the Hilbert space $\mathcal{H}_{\alpha}$ is defined as in Equation \eqref{eq; H alpha}. 
\end{remark}

Now, we present some examples of direct integral of locally Hilbert spaces.

\begin{example} \label{ex;dilhs direct sum and direct integral}
Consider $\big (\Lambda = \mathbb{N}, \leq \big )$ a directed poset. For each $i \in \Lambda$, consider a measure space $\big \{ \big ( X_i = \{1, 2, 3, ..., i \}, \Sigma_i, \mu_i \big ) \big \}$. Following Equation \eqref{eq;sigma algebra} and Definition \ref{def; lms}, we get a locally measure space $\big ( X = \mathbb{N}, \Sigma, \mu \big )$, where $\mu$ denotes the counting measure on $\mathbb{N}$. Let $\big \{ \mathcal{H}_n; \mathcal{E}_n = \{\mathcal{H}_{i, n}\}_{i \in \mathbb{N}}; \mathcal{D}_n \big \}_{n \in \mathbb{N}}$ be a family of quantized domains. Extending the notion of the finite direct sum of locally Hilbert spaces to the infinite case, we obtain
\begin{equation} \label{eq; direct sum infinite case}
\bigoplus\limits_{n =1}^\infty \mathcal{D}_n := \big \{ x = (x_1, x_2, ...) \; : \; x_n \in \mathcal{D}_n  \; \; \text{for all} \; n \in \mathbb{N} \; \; \text{and} \; \; \text{supp}(x) < \infty \; \big \} = \bigcup\limits_{i \in \mathbb{N}}  \left (\bigoplus\limits_{n =1}^\infty \mathcal{H}_{i, n} \right ).
\end{equation}
Next, we consider the direct integral of the family $\{ \mathcal{D}_n \}_{n \in \mathbb{N}}$ of locally Hilbert spaces and establish the following equality 
\begin{equation*}
\int^{\oplus_\text{loc}}_{\mathbb{N}_{\text{loc}}} \mathcal{D}_n \; \mathrm{d} \mu(n) = \bigoplus\limits_{n =1}^\infty \mathcal{D}_n.
\end{equation*}
To prove the equality, we begin by observing that $\displaystyle\int^{\oplus_\text{loc}}_{\mathbb{N}_{\text{loc}}} \mathcal{D}_n \; \mathrm{d} \mu(n) = \bigcup_{i \in \mathbb{N}} \mathcal{H}_i$, where for each $i \in \mathbb{N}$, the Hilbert space $\mathcal{H}_i$ is defined as (refer Equation \eqref{eq; H alpha})
\begin{equation*}
\mathcal{H}_i := \left \{ u \in \int^{\oplus_\text{loc}}_{\mathbb{N}_{\text{loc}}} \mathcal{D}_n \; \mathrm{d} \mu(n) ~~ : ~~  u(n) \in \mathcal{H}_{i, n} \; \; \text{for a.e.} \; \; n \in X_i  \; \; \; \text{and} \; \;  \text{supp}(u) \subseteq X_i \right \}.
\end{equation*}
Suppose $u \in \displaystyle\int^{\oplus_\text{loc}}_{\mathbb{N}_{\text{loc}}} \mathcal{D}_n \; \mathrm{d} \mu(n)$. Then $u \in \mathcal{H}_i$ for some $i \in \mathbb{N}$. Thus, $u : \mathbb{N} \rightarrow \bigcup\limits_{n \in X} \mathcal{D}_n$ with $u(n) \in \mathcal{D}_n$ for all $n \in X$. Moreover, $u(n) \in \mathcal{H}_{i, n}$ for each $n \in X_i = \{1, 2, 3, ..., i \}$ and $\text{supp}(u) := \{ n \in \mathbb{N} \; : \; u(n) \neq 0_{\mathcal{D}_n}  \} \subseteq \{1, 2, 3, ..., i \}$. Therefore, $u$ belongs to the Hilbert space $\bigoplus\limits_{n =1}^\infty \mathcal{H}_{i, n}$, and hence $u \in \bigoplus\limits_{n =1}^\infty \mathcal{D}_n$. This shows that $\displaystyle \int^{\oplus_\text{loc}}_{\mathbb{N}_{\text{loc}}} \mathcal{D}_n \; \mathrm{d} \mu(n) \subseteq \bigoplus\limits_{n =1}^\infty \mathcal{D}_n$. 

Conversely, suppose $u \in \bigoplus\limits_{n =1}^\infty \mathcal{D}_n$. Then, by Equation \eqref{eq; direct sum infinite case}, there exists some $i \in \mathbb{N}$ such that $u \in \bigoplus\limits_{n =1}^\infty \mathcal{H}_{i, n}$. Hence, there exists $k \in \mathbb{N}$ such that the support of $u$ is contained in $\{1, 2, 3, ..., k \}$ and we obtain 
\begin{equation*}
u = \big (u(1), u(2), ..., u(k), 0_{\mathcal{D}_{k+1}}, 0_{\mathcal{D}_{k+2}}, ... \big ).
\end{equation*}
Let $r := \text{max}\{ i, k \}$. Then for every $n \in \{1, 2, 3, ..., r \}$, we have $u(n) \in \mathcal{H}_{r, n}$, and for all $n > r$, $u(n) = 0_{\mathcal{D}_{n}}$. Thus $u$ satisfies condition (1) from Definition \ref{Defn: directint_loc} and as the support of $u$ is finite it follows that $u$ satisfies the conditions (2) and (3) from Definition \ref{Defn: directint_loc}. In particular,  $u \in \mathcal{H}_r \subseteq \displaystyle\int^{\oplus_\text{loc}}_{\mathbb{N}_{\text{loc}}} \mathcal{D}_n \; \mathrm{d} \mu(n)$ and we conclude that $\displaystyle \bigoplus\limits_{n =1}^\infty \mathcal{D}_n \subseteq  \int^{\oplus_\text{loc}}_{\mathbb{N}_{\text{loc}}} \mathcal{D}_n \; \mathrm{d} \mu(n)$. This proves the desired equality $\displaystyle\int^{\oplus_\text{loc}}_{\mathbb{N}_{\text{loc}}} \mathcal{D}_n \; \mathrm{d} \mu(n) = \bigoplus\limits_{n =1}^\infty \mathcal{D}_n$. 
\end{example}

\begin{remark}
In Example \ref{ex;dilhs direct sum and direct integral}, we established that $\displaystyle\int^{\oplus_\text{loc}}_{\mathbb{N}_{\text{loc}}} \mathcal{D}_n \; \mathrm{d} \mu(n) = \bigoplus\limits_{n =1}^\infty  \mathcal{D}_n$. However, for a fixed $i \in \Lambda$ the Hilbert space $\mathcal{H}_i$ is not equal to $\bigoplus\limits_{n =1}^\infty \mathcal{H}_{i, n}$. The reason is that in $\mathcal{H}_i$ vectors are supported on the subset of a finite set $X_i = \{1, 2, 3, ..., i \}$. In contrast, the Hilbert space $\bigoplus\limits_{n =1}^\infty \mathcal{H}_{i, n}$ consists of sequences with finite support, but not necessarily within $\{1, 2, 3, ..., i \}$. However, to obtain the equality between the Hilbert spaces $\mathcal{H}_i$ and $\bigoplus\limits_{n =1}^\infty \mathcal{H}_{i, n}$, one must replace the measure space $\big \{ \big ( X_i = \{1, 2, 3, ..., i \}, \Sigma_i, \mu_i \big ) \big \}$ with $\big ( X = \mathbb{N}, \Sigma, \mu \big )$ for each $i \in \Lambda$. That is, one has to consider $\displaystyle\int^{\oplus_\text{loc}}_{\mathbb{N}} \mathcal{D}_n \; \mathrm{d} \mu(n)$ instead of $\displaystyle\int^{\oplus_\text{loc}}_{\mathbb{N}_{\text{loc}}} \mathcal{D}_n \; \mathrm{d} \mu(n)$ (see Notation \ref{noatation; direct integral}).
\end{remark}

\begin{example} \label{ex;dilhs direct integral of C}
Consider a measure space $\big ( \mathbb{R}, B(\mathbb{R}), \mu \big)$ and a family $\{ \mathcal{H}_p = \mathbb{C} \}_{p \in \mathbb{R}}$ of Hilbert spaces.  Clearly, we know that the direct integral of the family $\{ \mathcal{H}_p = \mathbb{C} \}_{p \in \mathbb{R}}$ of Hilbert spaces over the measure space $\big ( \mathbb{R}, B(\mathbb{R}), \mu \big)$ is given by 
\begin{equation*}
\int^\oplus_{\mathbb{R}} \mathcal{H}_p \, \mathrm{d} \mu(p) = \int^\oplus_{\mathbb{R}} \mathbb{C} \, \mathrm{d} \mu(p) = \text{L}^2 \big (\mathbb{R}, \mu \big).
\end{equation*}
Now consider a directed poset $\big (\Lambda = \mathbb{N}, \leq \big )$. For each $n \in \mathbb{N}$, consider a measure space  $\big \{(X_n = [-n, n], B(X_n), \mu_n) \big \}$. By following the procedure given in Example \ref{ex;lsms}, we obtain a locally measure space $\big ( X = \mathbb{R}, B(\mathbb{R}), \mu \big)$. For each $p \in \mathbb{R}$, let $\left \{ \mathcal{H}_p = \mathbb{C}; \mathcal{E}_p = \{ \mathcal{H}_{n,p} = \mathbb{C} \}_{n \in \mathbb{N}}; \mathcal{D}_p = \mathbb{C} \right \}$ be a  quantized domain. Then by Proposition \ref{prop;dilhs}, we obtain 
\begin{equation*}
\displaystyle \int^{\oplus_\text{loc}}_{\mathbb{R}_\text{loc}} \mathcal{D}_p \, \dmu = \displaystyle \int^{\oplus_\text{loc}}_{\mathbb{R}_\text{loc}} \mathbb{C} \, \dmu =  \bigcup\limits_{n \in \mathbb{N}} \mathcal{H}_n,
\end{equation*}
where for each $n \in \mathbb{N}$,
\begin{equation} \label{eq; Hn in example}
\mathcal{H}_n := \left \{ u \in \int^{\oplus_\text{loc}}_{\mathbb{R}_\text{loc}} \mathbb{C} \, \dmu \; \; : \; \; u(p) \in \mathcal{H}_{n, p} = \mathbb{C} \; \text{for a.e.} \;   p \in X_n = [-n,n] \; \; \text{and} \; \; \text{supp}(u) \subseteq [-n,n] \right \},
\end{equation}
which is isomorphic to $\text{L}^2 \big ([-n,n], \mu_n \big)$. Thus, $\mathcal{H}_n$ consists of all Borel measurable functions $u : \mathbb{R} \rightarrow \mathbb{C}$ such that $\text{supp}(u) \subseteq [-n,n]$ with $\int\limits_{\mathbb{R}} |u(p)|^{2} \; \mathrm{d} \mu < \infty$. Therefore, from Equation \eqref{eq;dilhs=u} we obtain 
\begin{equation} \label{eq; direct integral of C}
\int^{\oplus_\text{loc}}_{\mathbb{R}_\text{loc}} \mathbb{C} \, \dmu 
 = \Big\{ u \in \text{L}^2 \big (\mathbb{R}, \mu \big) \; \; : \; \; \text{supp}(u) \subseteq [-n, n] \; \; \text{for some} \; \; n \in \mathbb{N}  \Big\},
\end{equation}
which is the collection of all functions in $\text{L}^2 \big (\mathbb{R}, \mu \big)$ with compact support and hence the locally Hilbert space $\displaystyle \int^{\oplus_\text{loc}}_{\mathbb{R}_\text{loc}} \mathbb{C} \, \dmu$ is dense in the Hilbert space $\int^{\oplus}_{\mathbb{R}} \mathbb{C} \, \dmu$. 
\end{example}

\begin{example} \label{ex;dilhs direct integral of L2 R mu}
Consider a measure space $\big ( \mathbb{R}, B(\mathbb{R}), \mu \big)$ and a family $\{ \mathcal{H}_p = \text{L}^{2}(\mathbb{R}, \mu) \}_{p \in \mathbb{R}}$ of Hilbert spaces. We know that the direct integral of the family $\big \{ \mathcal{H}_p = \text{L}^{2}(\mathbb{R}, \mu) \big \}_{p \in \mathbb{R}}$ of Hilbert spaces over the measure space $\big ( \mathbb{R}, B(\mathbb{R}), \mu \big)$ is given by 
\begin{equation*}
\int^\oplus_{\mathbb{R}} \mathcal{H}_p \, \mathrm{d} \mu(p) = \int^\oplus_{\mathbb{R}} \text{L}^{2}(\mathbb{R}, \mu) \, \mathrm{d} \mu(p) \cong \text{L}^2 \big (\mathbb{R}, \mu \big) \otimes \text{L}^{2}(\mathbb{R}, \mu).
\end{equation*}
Now consider a directed poset $\big ( \Lambda = [0, \infty), \leq \big )$. For each $\alpha \in [0, \infty)$ consider a measure space $\big \{ \big ( X_\alpha = [-\alpha, \alpha], B \big ([-\alpha, \alpha] \big), \mu_\alpha \big ) \big \}$. Then by following Equation \eqref{eq;sigma algebra} and Definition \ref{def; lms}, we get
a locally measure space $\big (X = \mathbb{R}, B(\mathbb{R}), \mu \big )$. For each $p \in \mathbb{R}$, let $\big \{ \mathcal{H}_p = \text{L}^{2}(\mathbb{R}, \mu) ; \mathcal{E}_p = \{\mathcal{H}_{\alpha, p} = \text{L}^{2}(\mathbb{R}, \mu) \}_{\alpha \in [0, \infty)}; \mathcal{D}_p = \text{L}^{2}(\mathbb{R}, \mu)  \big \}$ be a  quantized domain. Note that here each $\mathcal{D}_{p} = \text{L}^{2}(\mathbb{R}, \mu)$ is indeed a Hilbert space. By following Equation \eqref{eq;dilhs=u}, we get
\begin{equation*}
\displaystyle \int^{\oplus_{\text{loc}}}_{\mathbb{R}_\text{loc}} \mathcal{D}_{p} \, \dmu = \displaystyle \int^{\oplus_{\text{loc}}}_{\mathbb{R}_\text{loc}} \text{L}^{2}(\mathbb{R}, \mu) \, \dmu = \bigcup\limits_{\alpha \in {[0, \infty)}} \mathcal{H}_\alpha,
\end{equation*}
where 
\begin{equation*}
\mathcal{H}_\alpha = \left \{ u : \mathbb{R} \rightarrow \text{L}^{2}(\mathbb{R}, \mu)  ~~ : ~~  \text{supp}(u) \subseteq [-\alpha, \alpha] \; \; \text{and} \; \; \int_{\mathbb{R}} \la u(p), u(p) \ra \; \dmu < \infty \right \},
\end{equation*}
which is isomorphic to the Hilbert space $\text{L}^2 \big ([\alpha, \alpha], \mu_\alpha \big) \otimes \text{L}^{2}(\mathbb{R}, \mu)$. Thus, one may see that the locally Hilbert space $\displaystyle \int^{\oplus_{\text{loc}}}_{\mathbb{R}_\text{loc}} \text{L}^{2}(\mathbb{R}, \mu) \, \dmu $ is dense in the Hilbert space $\displaystyle \int^{\oplus}_{\mathbb{R}} \text{L}^{2}(\mathbb{R}, \mu) \, \dmu$.
\end{example}

It is evident from the Example \ref{ex;dilhs direct integral of C} and Example \ref{ex;dilhs direct integral of L2 R mu} that even though each $\mathcal{D}_p$ is a Hilbert space, the space $\displaystyle \dilX \mathcal{D}_p \, \dmu$ need not be a Hilbert space. However, as shown in Proposition \ref{prop;dilhs}, $\displaystyle \dilX \mathcal{D}_p \, \dmu$ is always a locally Hilbert space. 


\section{Decomposable and Diagonalizable Locally Bounded Operators} \label{sec; Decomposable and Diagonalizable Locally Bounded Operators}

Let $(\Lambda, \leq)$ be a directed poset and $(X, \Sigma, \mu)$ be a locally measure space as described in Note \ref{note; lms}. Consider a family $\big \{ \mathcal{H}_p; \mathcal{E}_p = \{\mathcal{H}_{\alpha, p}\}_{\alpha \in \Lambda}; \mathcal{D}_p \big \}_{p \in X}$ of quantized domains. By following Remark \ref{rem; quntized domain}, we obtain  $\left \{ \overline{\displaystyle \dilX \mathcal{D}_p \, \dmu}; \mathcal{E} = \{\mathcal{H}_{\alpha}\}_{\alpha \in \Lambda}; \displaystyle \dilX \mathcal{D}_p \, \dmu \right \}$ a quantized domain. In this section, we turn our attention to subcollections of $C^\ast_{\mathcal{E}}\left (\displaystyle \dilX \mathcal{D}_p \, \dmu \right)$ that align with the structure of direct integrals. We introduce these classes motivated by the classical setup of a direct integral of Hilbert spaces.

\begin{definition} \label{def;DecDiag(lbo)}
Let $(X, \Sigma, \mu)$ be a locally measure space and $\big \{ \mathcal{H}_p; \mathcal{E}_p = \{\mathcal{H}_{\alpha, p}\}_{\alpha \in \Lambda}; \mathcal{D}_p \big \}_{p \in X}$ be a family of quantized domains. 
Then a locally bounded operator $T \in C^\ast_{\mathcal{E}}\left (\displaystyle \dilX \mathcal{D}_p \, \dmu \right)$ is said to be:
\begin{enumerate} 
\item \label{def;Dec(lbo)} \textbf{decomposable}, if there exists a family $ \big \{ T_p \in C^\ast_{\mathcal{E}_p}\left ( \mathcal{D}_p \right) \big \}_{p \in X}$ of locally bounded operators such that for any $u \in \displaystyle \dilX \mathcal{D}_p \, \dmu$, we have
\begin{align*}
(Tu)(p) = T_pu(p) \; \; \; \; \text{for} \; \; \mu\text{-a.e.}
\end{align*}
In this case, we denote the operator $T$ by the notation $\displaystyle \dilX T_p \, \dmu$ and so
\begin{equation*}
\left (\dilX T_p \, \dmu \right ) \left (\dilX u(p) \, \dmu \right )= \dilX T_pu(p) \, \dmu;
\end{equation*}
\item \label{def;Diag(lbo)} \textbf{diagonalizable}, if $T$ is decomposable and there exists a measurable function $f : X \rightarrow \mathbb{C}$ such that for any $u \in \displaystyle \dilX \mathcal{D}_p \, \dmu$, we have 
\begin{equation*}
(Tu)(p) = f(p)u(p) \; \; \; \; \text{for} \; \; \mu\text{-a.e.}
\end{equation*}
In this situation, we get $T = \displaystyle \dilX T_p \, \dmu = \displaystyle \dilX f(p) \cdot \mathrm{Id}_{\mathcal{D}_p} \, \dmu$.
\end{enumerate}  
\end{definition}

\noindent
We denote the collection of all decomposable locally bounded operators and the collection of all diagonalizable locally bounded operators on $\displaystyle \dilX \mathcal{D}_p \, \dmu$ by $C^\ast_{\mathcal{E}, \text{DEC}}\left (\displaystyle \dilX \mathcal{D}_p \, \dmu \right)$ and $C^\ast_{\mathcal{E}, \text{DIAG}}\left (\displaystyle \dilX \mathcal{D}_p \, \dmu \right)$ respectively. From Definition \ref{def;DecDiag(lbo)}, one may observe that
\begin{equation} \label{eqn; containment}
C^\ast_{\mathcal{E}, \text{DIAG}}\left (\displaystyle \dilX \mathcal{D}_p \, \dmu \right) \subseteq C^\ast_{\mathcal{E}, \text{DEC}}\left (\displaystyle \dilX \mathcal{D}_p \, \dmu \right) \subseteq C^\ast_{\mathcal{E}}\left (\displaystyle \dilX \mathcal{D}_p \, \dmu \right).
\end{equation}

Next, we give some examples of the notion defined above. 

\begin{example}
Consider $\big (\Lambda = \mathbb{N}, \leq \big )$ the directed poset, $\big \{(X_n = [-n, n], B(X_n), \mu_n) \big \}_{n \in \mathbb{N}}$ the family of measure spaces, and $\big ( X = \mathbb{R}, B(\mathbb{R}), \mu \big)$ the locally measure space as described in Example \ref{ex;lsms}. The  space $\displaystyle \displaystyle \int^{\oplus_\text{loc}}_{\mathbb{R}_\text{loc}} \mathbb{C} \, \dmu$ is described in Equation \eqref{eq; direct integral of C} of  Example \ref{ex;dilhs direct integral of C}, where we obtain
\begin{equation*}
\displaystyle \int^{\oplus_\text{loc}}_{\mathbb{R}_\text{loc}} \mathbb{C} \, \dmu = \Big\{ u \in \text{L}^2 \big (\mathbb{R}, \mu \big) \; \; : \; \; \text{supp}(u) \subseteq [-n, n] \; \; \text{for some} \; \; n \in \mathbb{N}  \Big\}.
\end{equation*}
Let $f : \mathbb{R} \rightarrow \mathbb{C}$ be a measurable function defined by $f(p) := p$ for all $p \in \mathbb{R}$. Corresponding to the function $f$, we obtain a locally bounded operator 
\begin{equation*}
T_f \in  C^\ast_{\mathcal{E}}\left (\displaystyle \displaystyle \int^{\oplus_\text{loc}}_{\mathbb{R}_\text{loc}} \mathbb{C} \, \dmu \right) \; \; \; \; \text{defined by} \; \; \; \; (T_fu)(p) := f(p)u(p) = pu(p), \; \; \;\text{for a.e.} \; \; p \in \mathbb{R},
\end{equation*}
for every $u \in  \displaystyle \displaystyle \int^{\oplus_\text{loc}}_{\mathbb{R}_\text{loc}} \mathbb{C} \, \dmu$. Then $T_f \in C^\ast_{\mathcal{E}, \text{DIAG}}\left (\displaystyle \displaystyle \int^{\oplus_\text{loc}}_{\mathbb{R}_\text{loc}} \mathbb{C} \, \dmu \right) \subseteq C^\ast_{\mathcal{E}, \text{DEC}}\left (\displaystyle \displaystyle \int^{\oplus_\text{loc}}_{\mathbb{R}_\text{loc}} \mathbb{C} \, \dmu \right)$. 
\end{example}

\noindent
Now we give an example of a decomposable locally bounded operator (defined on a direct integral of locally Hilbert spaces) that is not diagonalizable .

\begin{example} \label{eg; Dec but not Diag 1}
Consider $\big (\Lambda = \mathbb{N}, \leq \big )$ the directed poset. For each $i \in \mathbb{N}$, consider a measure space $\big \{ \big ( X_i = \{1, 2, 3, ..., i \}, \Sigma_i, \mu_i \big ) \big \}$ and a locally measure space $\big ( X = \mathbb{N}, \Sigma, \mu \big )$ as given in Example \ref{ex;dilhs direct sum and direct integral}. Let $\big \{ \mathcal{H}_n; \mathcal{E}_n = \{\mathcal{H}_{i, n}\}_{i \in \mathbb{N}}; \mathcal{D}_n \big \}_{n \in \mathbb{N}}$ be a family of quantized domains, where 
\begin{equation*}
\big \{ \mathcal{H}_1 = \ell^2(\mathbb{N}); \mathcal{E}_1 = \{\text{span} \{ e_1, e_2, ..., e_i \}\}_{i \in \mathbb{N}}; \mathcal{D}_1 \big \} \; \; \; \text{and} \; \; \; \big \{ \mathcal{H}_n  = \{ 0 \}; \mathcal{E}_n = \{\{ 0 \}\}_{i \in \mathbb{N}}; \mathcal{D}_n \big \}_{n \geq 2} 
\end{equation*}
\big (here $\{e_{n}: n \in \mathbb{N}\}$ is a Hilbert basis of $\ell^2(X = \mathbb{N})$ \big). Now consider the direct integral $\displaystyle\int^{\oplus_\text{loc}}_{\mathbb{N}_\text{loc}} \mathcal{D}_n \; \mathrm{d} \mu(n)$ of the family $\{ \mathcal{D}_n \}_{n \in \mathbb{N}}$. However, by recalling Example \ref{ex;dilhs direct sum and direct integral}, we get $\displaystyle\int^{\oplus_\text{loc}}_{\mathbb{N}_\text{loc}} \mathcal{D}_n \; \mathrm{d} \mu(n) = \bigoplus^\infty_{n = 1} \mathcal{D}_n$.
Now define a operator $T \in C^\ast_{\mathcal{E}}\left (\displaystyle\int^{\oplus_\text{loc}}_{\mathbb{N}_\text{loc}} \mathcal{D}_n \; \mathrm{d} \mu(n) \right)$ as 
\begin{equation*}
T\big ( \big \{ u(n) \big \}_{n \in \mathbb{N}}  \big ) = \big \{ T_nu(n) \big \}_{n \in \mathbb{N}},
\end{equation*}
where $T_1$ is as defined in Example \ref{ex;lbo} and $T_n = 0$ for $n \geq 2$. For instance, 
\begin{equation*} 
T \left ( \left \{ \sum^N_{k = 1} \lambda_k e_k, 0, 0, 0, 0, ... \right \} \right) = \left \{ \sum^N_{k = 1} k \lambda_k e_k, 0, 0, 0, 0, ... \right \}.
\end{equation*}
It shows that $T$ is a decomposable locally bounded operator on $\displaystyle\int^{\oplus_\text{loc}}_{\mathbb{N}_\text{loc}} \mathcal{D}_n \; \mathrm{d} \mu(n)$, that is, $T \in C^\ast_{\mathcal{E}, \text{DEC}}\left (\displaystyle\int^{\oplus_\text{loc}}_{\mathbb{N}_\text{loc}} \mathcal{D}_n \; \mathrm{d} \mu(n) \right)$.  Now we show that $T \notin C^\ast_{\mathcal{E}, \text{DIAG}}\left (\displaystyle\int^{\oplus_\text{loc}}_{\mathbb{N}_\text{loc}} \mathcal{D}_n \; \mathrm{d} \mu(n) \right)$. Let us assume $T \in C^\ast_{\mathcal{E}, \text{DIAG}}\left (\displaystyle\int^{\oplus_\text{loc}}_{\mathbb{N}_\text{loc}} \mathcal{D}_n \; \mathrm{d} \mu(n) \right)$, then by following (\ref{def;Diag(lbo)}) of Definition \ref{def;DecDiag(lbo)}, there is a measurable function $f : \mathbb{N} \rightarrow \mathbb{C}$ satisfying $T\big ( \big \{ u(n) \big \}_{n \in \mathbb{N}}  \big ) = \big \{ f(n)u(n) \big \}_{n \in \mathbb{N}}$ for every $\big \{ u(n) \big \}_{n \in \mathbb{N}} \in \bigoplus\limits^\infty_{n = 1} \mathcal{D}_n$. Then for all $N \in \mathbb{N}$, we get 
\begin{equation} \label{eq; T is Dec not Diag}
\left \{ f(1) \left ( \sum^N_{k = 1}\lambda_k e_k \right ), 0, 0, 0, 0, ... \right \} = T \left( \left \{  \sum^N_{k = 1}\lambda_k e_k, 0, 0, 0, 0,... \right \} \right) = \left \{  \sum^N_{k = 1}k \lambda_k e_k, 0, 0, 0, 0, ... \right \},
\end{equation}
which is a contradiction. 
\end{example}

We furnish the following example with the intention that such a decomposable, non diagonalizable locally bounded operator exists even when $\Lambda$ is an arbitrary (possibly uncountable) directed poset.

\begin{example} \label{eg; Dec but not Diag 2}
Consider a directed poset $\big ( \Lambda = [0, \infty), \leq \big )$. For each $\alpha \in [0, \infty)$ consider a measure space $\big \{ \big ( X_\alpha = [-\alpha, \alpha], B \big ([-\alpha, \alpha] \big), \mu_\alpha \big ) \big \}$. Then as discussed in Example \ref{ex;dilhs direct integral of L2 R mu}, we get
a locally measure space $\big (X = \mathbb{R}, B(\mathbb{R}), \mu \big )$. Let $\big \{ \mathcal{H}_p; \mathcal{E}_p = \{\mathcal{H}_{\alpha, p} = \text{L}^{2}(\mathbb{R}, \mu) \}_{\alpha \in [0, \infty)}; \mathcal{D}_p \big \}_{p \in \mathbb{R}}$ be a family of quantized domains. By following Example \ref{ex;dilhs direct integral of L2 R mu}, we obtain 
\begin{equation*}
\displaystyle \int^{\oplus_{\text{loc}}}_{\mathbb{R}_\text{loc}} \text{L}^{2}(\mathbb{R}, \mu) \, \dmu = \bigcup\limits_{\alpha \in {[0, \infty)}} \mathcal{H}_\alpha,
\end{equation*}
where 
\begin{equation*}
\mathcal{H}_\alpha = \left \{ u : \mathbb{R} \rightarrow \text{L}^{2}(\mathbb{R}, \mu)  ~~ : ~~  \text{supp}(u) \subseteq [-\alpha, \alpha] \; \; \text{and} \; \; \int_{\mathbb{R}} \la u(p), u(p) \ra \; \dmu < \infty \right \}.
\end{equation*}
Let us define an operator $T : \displaystyle \int^{\oplus_{\text{loc}}}_{\mathbb{R}_\text{loc}} \text{L}^{2}(\mathbb{R}, \mu) \, \dmu \rightarrow \displaystyle \int^{\oplus_{\text{loc}}}_{\mathbb{R}_\text{loc}} \text{L}^{2}(\mathbb{R}, \mu) \, \dmu$ as 
\begin{equation*}
T \left ( u = \displaystyle \int^{\oplus_{\text{loc}}}_{\mathbb{R}_\text{loc}} u(p) \, \dmu \right ) := \displaystyle \int^{\oplus_{\text{loc}}}_{\mathbb{R}_\text{loc}} \hat{u}(p) \, \dmu,
\end{equation*}
where $\hat{u}(p) : \mathbb{R} \rightarrow \mathbb{C}$ is defined as $\hat{u}(p)(t) := u(p)(2t)$ for every $t \in \mathbb{R}$.
Since $\text{supp}(u) = \text{supp}(Tu)$ for every $u \in \displaystyle \int^{\oplus_{\text{loc}}}_{\mathbb{R}_\text{loc}}  \text{L}^{2}(\mathbb{R}, \mu) \, \dmu$ it follows that each $\mathcal{H}_\alpha$ is a reducing subspace for $T$. Hence $T \in C^\ast_{\mathcal{E}}\left (\displaystyle \int^{\oplus_{\text{loc}}}_{\mathbb{R}_\text{loc}}  \text{L}^{2}(\mathbb{R}, \mu) \, \dmu \right)$. Further, $T \in C^\ast_{\mathcal{E}, \text{DEC}}\left (\displaystyle \int^{\oplus_{\text{loc}}}_{\mathbb{R}_\text{loc}}  \text{L}^{2}(\mathbb{R}, \mu) \, \dmu \right)$. Because there is a family $\big \{ T_p \in C^\ast_{\mathcal{E}_p}\left (\mathcal{D}_p =  \text{L}^{2}(\mathbb{R}, \mu) \right) \big \}_{p \in X}$ given by $T_p : \text{L}^{2}(\mathbb{R}, \mu) \rightarrow \text{L}^{2}(\mathbb{R}, \mu)$, where $T_p(f)(t) = f(2t)$ for every $f \in \text{L}^{2}(\mathbb{R}, \mu)$ and $t, p \in \mathbb{R}$ such that for each $u \in \displaystyle \int^{\oplus_{\text{loc}}}_{\mathbb{R}_\text{loc}}  \text{L}^{2}(\mathbb{R}, \mu) \, \dmu$, we have
\begin{equation*}
(Tu)(p) = T_p(u(p))  \; \; \; \; \text{for a.e.}  \; p \in X.
\end{equation*}
Now we show that $T \notin C^\ast_{\mathcal{E}, \text{DIAG}}\left (\displaystyle \int^{\oplus_{\text{loc}}}_{\mathbb{R}_\text{loc}}  \text{L}^{2}(\mathbb{R}, \mu) \, \dmu \right)$. Let $u : \mathbb{R} \rightarrow \text{L}^{2}(\mathbb{R}, \mu)$ be given by 
\begin{equation*}
u(p)(t) := \begin{cases}
1, & \text{if} \;\; t \in [-10, 10] \;,\; p \in [-1, 1];\\
0 & \text{otherwise}.
\end{cases}
\end{equation*}
Then $u \in \displaystyle \int^{\oplus_{\text{loc}}}_{\mathbb{R}_\text{loc}}  \text{L}^{2}(\mathbb{R}, \mu) \, \dmu$. Suppose $T \in C^\ast_{\mathcal{E}, \text{DIAG}}\left (\displaystyle \int^{\oplus_{\text{loc}}}_{\mathbb{R}_\text{loc}}  \text{L}^{2}(\mathbb{R}, \mu) \, \dmu \right)$, then by following (\ref{def;Diag(lbo)}) of Definition \ref{def;DecDiag(lbo)}, there exists a measurable function $f : \mathbb{R} \rightarrow \mathbb{C}$ such that 
\begin{equation} \label{Eq: Suppose T is diag}
(Tu)(p) = f(p)u(p),\; \text{for a.e.}\; p \in \mathbb{R}. 
\end{equation}
By the definition of $T$, we get $Tu(p)(t) = 1$, whenever $p \in [-1, 1]$ and $t \in [-5, 5]$. From Equation \eqref{Eq: Suppose T is diag}, it follows that $f(p) = 1$ for a.e. $p \in [-1, 1]$. On the other hand $Tu(p)(t) = 0$, whenever $p \in [-1, 1]$ and $t \in [-10, -5) \cup (5, 10]$. That is, $f(p) = 0$ for a.e. $p \in [-1, 1]$ (from Equation \eqref{Eq: Suppose T is diag}). This is a contradiction. Therefore, $T \notin C^\ast_{\mathcal{E}, \text{DIAG}}\left (\displaystyle \int^{\oplus_{\text{loc}}}_{\mathbb{R}_\text{loc}}  \text{L}^{2}(\mathbb{R}, \mu) \, \dmu \right)$.
\end{example}

Next, we present an example of a locally bounded operator which is not decomposable.

\begin{example} \label{eg; LBO but not Dec}
Let $\big ( \Lambda = \{ 1 \} \cup [2, \infty), \leq \big )$ be directed poset. For each $\alpha \in \Lambda$ consider a measure space $\big \{ \big ( X_\alpha = [-\alpha, \alpha], B \big ([-\alpha, \alpha] \big), \mu_\alpha \big ) \big \}$. Then by following Equation \eqref{eq;sigma algebra} and Definition \ref{def; lms}, we get
a locally measure space $\big (X = \mathbb{R}, B(\mathbb{R}), \mu \big )$. Let $\big \{ \mathcal{H}_p; \mathcal{E}_p = \{\mathcal{H}_{\alpha, p} = \mathbb{C}\}_{\alpha \in \Lambda}; \mathcal{D}_p \big \}_{p \in \mathbb{R}}$ be a family of quantized domains. Then for each $p \in X$, we get a locally Hilbert space $\mathcal{D}_{p} = \varinjlim\limits_{\alpha \in \Lambda} \mathcal{H}_{\alpha, p} = \mathbb{C}$. Note that here each $\mathcal{D}_{p}$ is indeed a Hilbert space and for any $p, q \in \mathbb{R}$, we have $\mathcal{H}_{p} = \mathcal{D}_{p} = \mathbb{C} = \mathcal{D}_{q} = \mathcal{H}_{q}$. By following the similar procedure as in Example \ref{ex;dilhs direct integral of C}, we get 
\begin{equation*}
\int^{\oplus_\text{loc}}_{\mathbb{R}_\text{loc}} \mathbb{C} \, \dmu 
 = \Big\{ u \in \text{L}^2 \big (\mathbb{R}, \mu \big) \; \; : \; \; \text{supp}(u) \subseteq [-\alpha, \alpha] \; \; \text{for some} \; \; \alpha \in \Lambda  \Big\},
\end{equation*}
In fact, $\displaystyle \int^{\oplus_\text{loc}}_{\mathbb{R}_\text{loc}} \mathbb{C} \, \dmu  = \bigcup\limits_{\alpha \in \Lambda} \mathcal{H}_{\alpha}$ (see Equation \eqref{eq;dilhs=u}), where 
\begin{equation*}
\mathcal{H}_\alpha := \left \{ u \in \text{L}^2 \big (\mathbb{R}, \mu \big) \; \; : \; \; \text{supp}(u) \subseteq [-\alpha, \alpha] \right \}.
\end{equation*}
Now we define an operator $T : \displaystyle \int^{\oplus_\text{loc}}_{\mathbb{R}_\text{loc}} \mathbb{C} \, \dmu \rightarrow \displaystyle \int^{\oplus_\text{loc}}_{\mathbb{R}_\text{loc}} \mathbb{C} \, \dmu$ by 
\begin{equation*}
T(u)(p) := \up{\chi}_{[\frac{-1}{2}, \frac{1}{2}]}(p)\; u(2p), \; \; \; \text{for every} \; \; u \in \int^{\oplus_\text{loc}}_{\mathbb{R}_\text{loc}} \mathbb{C} \, \dmu.
\end{equation*}
It is immediate to see that $\text{supp}(Tu) \subseteq [\frac{-1}{2}, \frac{1}{2}]$. Thus, for every $\alpha \in \Lambda$,  $T(\mathcal{H}_{\alpha}) \subseteq \mathcal{H}_{1} \subseteq \mathcal{H}_\alpha$ and so, $T \in C^\ast_{\mathcal{E}}\left (\displaystyle \int^{\oplus_\text{loc}}_{\mathbb{R}_\text{loc}} \mathbb{C} \, \dmu  \right)$. Now we show that $T \notin C^\ast_{\mathcal{E}, \text{DEC}}\left (\displaystyle \int^{\oplus_\text{loc}}_{\mathbb{R}_\text{loc}} \mathbb{C} \, \dmu  \right)$.  Suppose $T$ is decomposable, then by (\ref{def;Dec(lbo)}) of Definition \ref{def;DecDiag(lbo)}, there exists a family $\big \{ T_p \in C^\ast_{\mathcal{E}_p}\left (\mathcal{D}_p = \mathbb{C} \right) \big \}_{p \in \mathbb{R}}$ satisfying for every $u \in \displaystyle \int^{\oplus_\text{loc}}_{\mathbb{R}_\text{loc}} \mathbb{C} \, \dmu$
\begin{equation*}
(Tu)(p) = T_pu(p), \; \; \; \text{a.e.} \; \;  p \in \mathbb{R}.
\end{equation*}
Since each $T_p$ is a linear operator on $\mathbb{C}$, we obtain $T_p = c_p$ for some $c_p \in \mathbb{C}$. Now, if we take
\begin{equation*}
u(p) := \begin{cases}
p, & \text{if} \;\; p \in [\frac{-1}{2}, \frac{1}{2}]; \\
0 & \text{otherwise}
\end{cases} \; \; \; \; \; \; 
v(p) := \begin{cases}
p, & \text{if} \;\; p \in [\frac{-1}{4}, \frac{1}{4}]; \\
0 & \text{otherwise},
\end{cases}
\end{equation*}
then for a.e. $p \in [\frac{-1}{4}, \frac{-1}{8}] \cup [\frac{1}{8}, \frac{1}{4}]$, we get
\begin{equation*}
T_pu(p) = c_p p = (Tu)(p) = 2p,
\end{equation*}
that is $c_p = 2$. Whereas, for a.e. $p \in [\frac{-1}{4}, \frac{-1}{8}] \cup [\frac{1}{8}, \frac{1}{4}]$, we obtain
\begin{equation*}
T_pv(p) = c_p p = (Tv)(p) = 0,
\end{equation*}
that is $c_p = 0$. This is a contradiction. Therefore, $T \notin C^\ast_{\mathcal{E}, \text{DEC}}\left (\displaystyle \int^{\oplus_\text{loc}}_{\mathbb{R}_\text{loc}} \mathbb{C} \, \dmu  \right)$. 
\end{example}

\subsection{Observations I} \label{obs; 1} Let $(\Lambda, \leq)$ be a directed poset and $(X, \Sigma, \mu)$ be a locally measure space as described in Note \ref{note; lms}. Consider a family $\big \{ \mathcal{H}_p; \mathcal{E}_p = \{\mathcal{H}_{\alpha, p}\}_{\alpha \in \Lambda}; \mathcal{D}_p \big \}_{p \in X}$ of quantized domains. By following Remark \ref{rem; quntized domain}, we obtain  $\left \{ \overline{\displaystyle \dilX \mathcal{D}_p \, \dmu}; \mathcal{E} = \{\mathcal{H}_{\alpha}\}_{\alpha \in \Lambda}; \displaystyle \dilX \mathcal{D}_p \, \dmu \right \}$ a quantized domain.  The following key observations are useful in understanding the notion of  decomposable and diagonalizable locally bounded operators on $\displaystyle \dilX \mathcal{D}_p \, \dmu$.

\begin{enumerate} 
\item Let $T \in C^\ast_{\mathcal{E}, \text{DEC}}\left (\displaystyle \dilX \mathcal{D}_p \, \dmu \right)$. Then unique up to a measure zero set there exists a family $\big \{T_p \in C^\ast_{\mathcal{E}_p}\left (\mathcal{D}_p \right) \big \}_{p \in X}$ such that $T = \displaystyle \dilX T_p \, \dmu$. Suppose $\big \{T_p \in C^\ast_{\mathcal{E}_p}\left (\mathcal{D}_p \right) \big \}_{p \in X}$ and $\big \{T^\prime_p \in C^\ast_{\mathcal{E}_p}\left (\mathcal{D}_p \right) \big \}_{p \in X}$ are two distinct families such that $\displaystyle \dilX T_p \, \dmu = T = \displaystyle \dilX T^\prime_p \, \dmu$. Then from point (3) of Definition \ref{Defn: directint_loc}, we get $T_p = T^\prime_p$ for $\mu$-a.e.

\item \label{obs;M DEC and M DIAG are star algebras}
From Example \ref{ex;lca}, we know that the collection of all locally bounded operators on $\displaystyle \dilX \mathcal{D}_p \, \dmu$  denoted by $C^\ast_\mathcal{E}\left(\displaystyle \dilX \mathcal{D}_p \, \dmu \right)$ is a locally $C^\ast$-algebra. From Equation \eqref{eqn; containment}, we have 
$C^\ast_{\mathcal{E}, \text{DIAG}}\left (\displaystyle \dilX \mathcal{D}_p \, \dmu \right) \subseteq C^\ast_{\mathcal{E}, \text{DEC}}\left (\displaystyle \dilX \mathcal{D}_p \, \dmu \right) \subseteq C^\ast_{\mathcal{E}}\left (\displaystyle \dilX \mathcal{D}_p \, \dmu \right)$. We get $C^\ast_{\mathcal{E}, \text{DEC}}\left (\displaystyle \dilX \mathcal{D}_p \, \dmu \right)$ to be a locally convex $\ast$-subalgebra of $C^\ast_\mathcal{E}\left(\displaystyle \dilX \mathcal{D}_p \, \dmu \right)$ with respect to the following operations
\begin{multicols}{2}
\begin{enumerate}
\item $T + S = \displaystyle \dilX T_p + S_p \, \dmu$\\
\item $\lambda \cdot T = \displaystyle \dilX \lambda \cdot T_p\, \dmu$
\item $T \cdot S = \displaystyle \dilX T_p \cdot S_p \, \dmu$\\
\item $T^\ast = \displaystyle \dilX T^\ast_p \, \dmu$,
\end{enumerate}
\end{multicols}

\noindent
for every $T = \displaystyle \dilX T_p \, \dmu$ and $S = \displaystyle \dilX S_p \, \dmu$ in $C^\ast_\mathcal{E}\left(\displaystyle \dilX \mathcal{D}_p \, \dmu \right)$ and $\lambda \in \mathbb{C}$. 
Also, with respect to the similar operations $C^\ast_{\mathcal{E}, \text{DIAG}}\left (\displaystyle \dilX \mathcal{D}_p \, \dmu \right)$ forms a locally convex $\ast$-subalgebra of $C^\ast_\mathcal{E}\left(\displaystyle \dilX \mathcal{D}_p \, \dmu \right)$.

\item \label{obs; description of V alpha T V alpha star}
Suppose $T \in C^\ast_{\mathcal{E}, \text{DEC}}\left (\displaystyle \dilX \mathcal{D}_p \, \dmu \right)$, then there is a family $\big \{T_p \in C^\ast_{\mathcal{E}_p}\left (\mathcal{D}_p \right) \big \}_{p \in X}$ such that $T = \displaystyle \dilX T_p \, \dmu$. Also, we have $\displaystyle \dilX \mathcal{D}_p \, \dmu = \bigcup\limits_{\alpha \in \Lambda} \mathcal{H}_\alpha$ (see Equation \eqref{eq;dilhs=u}),
where 
\begin{equation*}
\mathcal{H}_\alpha := \left \{ u \in \dilX \mathcal{D}_p \, \dmu ~~ : ~~   u(p) \in \mathcal{H}_{\alpha, p} \; \; \text{for a.e.} \; \; p \in X_\alpha \; \; \text{and} \; \; \text{supp}(u) \subseteq X_\alpha \right \}.
\end{equation*}
\noindent
Now for each $\alpha \in \Lambda$ consider the isomorphism $V_\alpha : \mathcal{H}_\alpha \rightarrow \int^\oplus_{X_\alpha} \mathcal{H}_{\alpha, p} \, \mathrm{d} \mu_\alpha (p)$ given by $V_\alpha(u)(p) := u(p)$ for all $p \in X_\alpha$ and $u \in \mathcal{H}_\alpha$ \big (refer Equation \eqref{eq;iso} \big ). Fix $\alpha \in \Lambda$ and  $V_\alpha (u) \in \int^\oplus_{X_\alpha} \mathcal{H}_{\alpha, p} \, \mathrm{d} \mu_\alpha (p)$ for some $u \in \mathcal{H}_\alpha$, then 
\begin{equation*}
V_\alpha T V^\ast_\alpha (V_\alpha(u))(p) =  V_\alpha (Tu)(p) = (Tu)(p) = T_p u(p) = T_p \big |_{\mathcal{H}_{\alpha, p}} u(p),
\end{equation*}
for a.e. $p \in X_\alpha$. Thus $V_\alpha T V_\alpha^\ast$ is a decompoable bounded operator on $\int^\oplus_{X_\alpha} \mathcal{H}_{\alpha, p} \, \mathrm{d} \mu_\alpha (p)$, and 
\begin{equation} \label{eq;restriction of DecLBO}
V_\alpha T V_\alpha^\ast = \int^\oplus_{X_\alpha} T_{p} \big|_{\mathcal{H}_{\alpha, p}} \,  \mathrm{d} \mu_\alpha (p) \; \; \; \text{for every} \; \; \alpha \in \Lambda.
\end{equation}
Moreover, for each $\alpha \in \Lambda$, we obtain 
\begin{equation}\label{eq; norm of T restricted to H alpha}
\big \| T\big|_{\mathcal{H}_\alpha} \big \| =  \big \| V_\alpha T V_\alpha^\ast \big \| = \text{ess} \sup\limits_{p \in X_\alpha} \big \{ \big \| T_{p}\big|_{\mathcal{H}_{\alpha, p}} \big\| \big \} < \infty.  
\end{equation}

In particular, if $T \in C^\ast_{\mathcal{E}, \text{DIAG}}\left (\displaystyle \dilX \mathcal{D}_p \, \dmu \right)$ with 
$T = \displaystyle \dilX f(p) \cdot \mathrm{Id}_{\mathcal{D}_{p}} \, \dmu$, for some measurable function $f : X \rightarrow \mathbb{C}$, then for each $\alpha \in \Lambda$, we have 
\begin{equation} \label{eq;restriction of DiagLBO}
V_\alpha T V_\alpha^\ast = \int^\oplus_{X_\alpha} f(p) \cdot \mathrm{Id}_{\mathcal{H}_{\alpha, p}} \,  \mathrm{d} \mu_\alpha (p),
\end{equation}
where $f \big|_{X_\alpha} \in \text{L}^\infty \big (X_\alpha, \mu_\alpha \big )$. As a result
$V_\alpha T V_\alpha^\ast$ is a diagonalizable bounded linear operator for each $\alpha \in \Lambda$.

\item If $T \in C^\ast_{\mathcal{E}, \text{DIAG}}\left (\displaystyle \dilX \mathcal{D}_p \, \dmu \right)$, then
$T = \displaystyle \dilX f(p) \cdot \mathrm{Id}_{\mathcal{D}_p}  \, \dmu$, for some measurable function $f : X \rightarrow \mathbb{C}$ and from the previous observation, we get that for each $\alpha \in \Lambda$, the bounded linear operator $V_\alpha T V_\alpha^\ast$ on $\int^\oplus_{X_\alpha} \mathcal{H}_{\alpha, p} \, \mathrm{d} \mu_\alpha$ is diagonalizable. By following (\ref{def;Diagbo}) of Definition \ref{def;Debo}, corresponding to each $V_\alpha T V_\alpha^\ast$ there is a function $f_\alpha \in \text{L}^\infty \big (X_\alpha, \mu_\alpha \big )$. As a result, to define diagonalizable locally bounded operator, one may think of considering the family of measurable functions $ \big \{ f_\alpha  \in \text{L}^\infty \big (X_\alpha, \mu_\alpha \big ) \big \}_{\alpha \in \Lambda}$ such that
\begin{equation*}
(Tu)(p) = f_\alpha(p)u(p)  \; \; \text{for a.e.} \; \; p \in X_\alpha \; \; \text{and for every} \; \; u \in \mathcal{H}_\alpha.
\end{equation*}
In that case, by using the fact that $\mathcal{H}_\alpha \subseteq \mathcal{H}_\beta$ (whenever $\alpha \leq \beta)$, we see that  
\begin{equation*}
f_\alpha(p)u(p) = (Tu)(p)  = f_\beta(p)u(p)
\end{equation*}
for a.e. $p \in X_\alpha$ and for every $u \in \mathcal{H}_\alpha$. Thus, $f_\alpha(p) = f_\beta(p)$ for a.e. $p \in X_\alpha$. On the other hand, if $\alpha, \beta \in \Lambda$ are not comparable, then there exists $\gamma \in \Lambda$ such that $\alpha \leq \gamma$ and $\beta \leq \gamma$. For any $u \in \mathcal{H}_\alpha \subseteq \mathcal{H}_\gamma$ and $v \in \mathcal{H}_\beta \subseteq \mathcal{H}_\gamma$, we have
\begin{align*}
f_\alpha(p)u(p) = (Tu)(p)  &= f_\gamma(p)u(p) \; \; \text{for a.e.} \; \; p \in X_\alpha;  \\
f_\beta(q)v(q) = (Tv)(q)  &= f_\gamma(q)v(q) \; \; \text{for a.e.} \; \; q \in X_\beta.
\end{align*}  
Consequently, we get $f_\alpha(p) = f_\gamma(p) = f_\beta(p)$ for a.e. $p \in X_\alpha \cap X_\beta \cap X_\gamma$. 

Therefore, this shows that for any $\alpha, \beta \in \Lambda$, if $X_\alpha \cap X_\beta \neq \emptyset$, then $f_\alpha(p) = f_\beta(p)$ for a.e. $p \in X_\alpha \cap X_\beta$. In view of this, by defining $f : X \rightarrow \mathbb{C}$ by $f(p) := f_\alpha(p)$, whenever $p \in X_\alpha$, we get $f$ to be measurable such that $f \big |_{X_\alpha} \in  \text{L}^\infty \big (X_\alpha, \mu_\alpha \big )$ for every $\alpha \in \Lambda$ and $(Tu)(p) = f(p) u(p)$ for a.e. $p \in X$. Therefore, considering such family $ \big \{ f_\alpha  \in \text{L}^\infty \big (X_\alpha, \mu_\alpha \big ) \big \}_{\alpha \in \Lambda}$  is equivalent to saying that there is a measurable function $f : X \rightarrow \mathbb{C}$ as defined in \ref{def;Diag(lbo)} of Definition \ref{def;DecDiag(lbo)}.
\end{enumerate}

Next, we consider some specific cases in which the collection of all decomposable and the collection of all diagonalizable locally bounded operators on the direct integral of locally Hilbert spaces are locally von Neumann algebras.

\begin{theorem} \label{thm;DEC and DIAG LvNA}
Let $(\Lambda, \leq)$ be a directed poset and $(X, \Sigma, \mu)$ be a locally measure space (see Note \ref{note; lms}). For each $p \in X$ assign a quantized domain $\big \{ \mathcal{H}_p; \mathcal{E}_p = \{\mathcal{H}_{\alpha, p}\}_{\alpha \in \Lambda}; \mathcal{D}_p \big \}$. Suppose either $\Lambda$ is a countable set or $\mu$ is a countable measure on $X$, then 
\begin{enumerate}
\item[(a)] $C^\ast_{\mathcal{E}, \text{DEC}} \left(\displaystyle \dilX \mathcal{D}_p \, \dmu \right)$ is a locally von Neumann algebra;
\item[(b)] $C^\ast_{\mathcal{E}, \text{DIAG}} \left(\displaystyle \dilX \mathcal{D}_p \, \dmu \right)$ is an abelian locally von Neumann algebra.
\end{enumerate}
\end{theorem}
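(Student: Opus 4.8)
The plan is to realize both $C^\ast_{\mathcal{E},\text{DEC}}\left(\dilX\mathcal{D}_p\,\dmu\right)$ and $C^\ast_{\mathcal{E},\text{DIAG}}\left(\dilX\mathcal{D}_p\,\dmu\right)$ as projective limits of projective systems of von Neumann algebras and then appeal to Definition \ref{def; lva 1}. Throughout, write $\mathcal{D}:=\dilX\mathcal{D}_p\,\dmu$ and recall from Proposition \ref{prop;dilhs} and Equation \eqref{eq;dilhs=u} that $\mathcal{D}=\bigcup_{\alpha\in\Lambda}\mathcal{H}_\alpha$, where $\mathcal{H}_\alpha$ is as in Equation \eqref{eq; H alpha} and $V_\alpha$ of Equation \eqref{eq;iso} is a Hilbert space isomorphism $\mathcal{H}_\alpha\to\int^\oplus_{X_\alpha}\mathcal{H}_{\alpha,p}\,\mathrm{d}\mu_\alpha(p)$.

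First I would assemble the two projective systems. For each $\alpha\in\Lambda$ the Hilbert space $\mathcal{H}_\alpha$ carries the quantized domain $\big\{\mathcal{H}_\alpha;\,\{\mathcal{H}_\beta\}_{\beta\leq\alpha};\,\mathcal{H}_\alpha\big\}$, and the $C^\ast$-algebra $C^\ast_{\mathcal{E}_\alpha}(\mathcal{H}_\alpha)\subseteq\mathcal{B}(\mathcal{H}_\alpha)$ of Example \ref{ex;lca} equals the commutant of the orthogonal projections of $\mathcal{H}_\alpha$ onto the closed subspaces $\mathcal{H}_\beta$, $\beta\leq\alpha$; hence it is a von Neumann algebra. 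Let $\mathcal{M}_\alpha$ be its intersection with the von Neumann algebra of decomposable operators on $\int^\oplus_{X_\alpha}\mathcal{H}_{\alpha,p}\,\mathrm{d}\mu_\alpha(p)$ (Theorem \ref{thm;DeDibo vNA}), and let $\mathcal{N}_\alpha$ be the von Neumann algebra of diagonalizable operators there; a short check gives $\mathcal{N}_\alpha\subseteq\mathcal{M}_\alpha$ and $\mathcal{N}_\alpha$ abelian. For $\alpha\leq\beta$ the restriction $\phi_{\alpha,\beta}\colon S\mapsto S\big|_{\mathcal{H}_\alpha}$ maps $\mathcal{M}_\beta$ into $\mathcal{M}_\alpha$ and $\mathcal{N}_\beta$ into $\mathcal{N}_\alpha$ (restriction preserves decomposability, diagonalizability, and membership in the relevant $C^\ast_{\mathcal{E}_\alpha}(\mathcal{H}_\alpha)$) and is a unital $\ast$-homomorphism. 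Thus $\big(\{\mathcal{M}_\alpha\},\{\phi_{\alpha,\beta}\}\big)$ and $\big(\{\mathcal{N}_\alpha\},\{\phi_{\alpha,\beta}\}\big)$ are projective systems of von Neumann algebras — the second of abelian algebras — so by Remark \ref{rem;pl,sh} and Definition \ref{def; lva 1}, $\varprojlim_{\alpha}\mathcal{M}_\alpha$ is a locally von Neumann algebra and $\varprojlim_{\alpha}\mathcal{N}_\alpha$ is an abelian one.

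Next I would identify $C^\ast_{\mathcal{E},\text{DEC}}(\mathcal{D})$ with $\varprojlim_{\alpha}\mathcal{M}_\alpha$, and $C^\ast_{\mathcal{E},\text{DIAG}}(\mathcal{D})$ with $\varprojlim_{\alpha}\mathcal{N}_\alpha$, via the map $\Phi\colon T\mapsto\big(V_\alpha T V_\alpha^\ast\big)_{\alpha\in\Lambda}$. By item (3) of Subsection \ref{obs; 1} (Equations \eqref{eq;restriction of DecLBO} and \eqref{eq; norm of T restricted to H alpha}), if $T\in C^\ast_{\mathcal{E},\text{DEC}}(\mathcal{D})$ then each $V_\alpha T V_\alpha^\ast$ is a bounded decomposable operator lying in $C^\ast_{\mathcal{E}_\alpha}(\mathcal{H}_\alpha)$, hence in $\mathcal{M}_\alpha$, and $\phi_{\alpha,\beta}\big(V_\beta T V_\beta^\ast\big)=V_\alpha T V_\alpha^\ast$ since $\mathcal{H}_\alpha\subseteq\mathcal{H}_\beta$; so $\Phi$ is a well-defined $\ast$-homomorphism into $\varprojlim_{\alpha}\mathcal{M}_\alpha$. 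It is injective because $\mathcal{D}=\bigcup_\alpha\mathcal{H}_\alpha$, and a homeomorphism onto its image because the defining seminorms $p_\alpha(T)=\big\|T\big|_{\mathcal{H}_\alpha}\big\|$ of $C^\ast_{\mathcal{E}}(\mathcal{D})$ (Example \ref{ex;lca}) are exactly those induced from $\varprojlim_{\alpha}\mathcal{M}_\alpha$. For surjectivity, let $(S_\alpha)_\alpha\in\varprojlim_{\alpha}\mathcal{M}_\alpha$; setting $Tu:=V_\alpha^\ast S_\alpha V_\alpha u$ for $u\in\mathcal{H}_\alpha$ is consistent by compatibility and yields $T\in C^\ast_{\mathcal{E}}(\mathcal{D})$ (Equation \eqref{eqn; locally bounded operator on D}) with $V_\alpha T V_\alpha^\ast=S_\alpha$ for every $\alpha$; it then remains to show $T$ is decomposable, i.e.\ to produce a family $\big\{T_p\in C^\ast_{\mathcal{E}_p}(\mathcal{D}_p)\big\}_{p\in X}$ with $T=\dilX T_p\,\dmu$.

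This gluing is the main obstacle, and the hypothesis on $\Lambda$ or $\mu$ is used precisely here. Writing $S_\alpha=\int^\oplus_{X_\alpha}S_{\alpha,p}\,\mathrm{d}\mu_\alpha(p)$, the compatibility $\phi_{\alpha,\beta}(S_\beta)=S_\alpha$ together with uniqueness of the fibre decomposition (Theorem \ref{thm;DeDibo vNA}) forces $S_{\alpha,p}=S_{\beta,p}\big|_{\mathcal{H}_{\alpha,p}}$ for $\mu_\alpha$-a.e.\ $p\in X_\alpha$, for each pair $\alpha\leq\beta$. If $\Lambda$ is countable, there are only countably many such pairs and, because $\mu$ restricts to $\mu_\alpha$ on $\Sigma_\alpha$ (Definition \ref{def;sisms}), the union $N$ of the corresponding exceptional sets is $\mu$-null; if $\mu$ is a counting measure, every $\mu$-null set is empty and one may take $N=\emptyset$. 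Off $N$ the operators $S_{\alpha,p}$ for the indices $\alpha$ with $p\in X_\alpha$ cohere, so $T_pv:=S_{\alpha,p}v$ for $v\in\mathcal{H}_{\alpha,p}$ (extended arbitrarily, e.g.\ by the identity, for $p\in N$) defines $T_p\in C^\ast_{\mathcal{E}_p}(\mathcal{D}_p)$ satisfying $(Tu)(p)=S_{\alpha,p}u(p)=T_pu(p)$ for a.e.\ $p\in X$ and all $u\in\mathcal{H}_\alpha$; hence $T=\dilX T_p\,\dmu\in C^\ast_{\mathcal{E},\text{DEC}}(\mathcal{D})$. This proves (a). Part (b) follows by the same argument with $\mathcal{N}_\alpha$ in place of $\mathcal{M}_\alpha$: a point of $\varprojlim_{\alpha}\mathcal{N}_\alpha$ corresponds, by item (4) of Subsection \ref{obs; 1}, to a compatible family $\big\{f_\alpha\in\text{L}^\infty(X_\alpha,\mu_\alpha)\big\}_{\alpha\in\Lambda}$, which under the countability hypothesis patches (across the countably many overlaps, or with no overlaps to reconcile in the counting-measure case) to a single measurable $f\colon X\to\mathbb{C}$ with $f\big|_{X_\alpha}\in\text{L}^\infty(X_\alpha,\mu_\alpha)$ for all $\alpha$; then $\dilX f(p)\cdot\mathrm{Id}_{\mathcal{D}_p}\,\dmu$ is the required preimage, so $C^\ast_{\mathcal{E},\text{DIAG}}(\mathcal{D})\cong\varprojlim_{\alpha}\mathcal{N}_\alpha$ is an abelian locally von Neumann algebra. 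The leftover verifications — weak closedness of $C^\ast_{\mathcal{E}_\alpha}(\mathcal{H}_\alpha)$, that $\Phi$ intertwines the algebraic operations, and the topological matching — are routine.
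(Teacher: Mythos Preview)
Your proposal is correct and takes essentially the same approach as the paper: both realize $C^\ast_{\mathcal{E},\text{DEC}}(\mathcal{D})$ and $C^\ast_{\mathcal{E},\text{DIAG}}(\mathcal{D})$ as projective limits of projective systems of von Neumann algebras and invoke Definition \ref{def; lva 1}, with the countability hypothesis used in exactly the same fibrewise gluing step to control the union of the pairwise exceptional null sets. The only cosmetic difference is that the paper verifies the universal property of the projective limit (constructing the factorizing map $\Psi$ from an arbitrary compatible pair $(\mathcal{W},\{\psi_\alpha\})$), whereas you build an explicit $\ast$-isomorphism $\Phi$ onto the concrete model $\varprojlim_\alpha\mathcal{M}_\alpha\subseteq\prod_\alpha\mathcal{M}_\alpha$.
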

\begin{proof}
For each $\beta \in \Lambda$, consider the set $\Lambda_\beta = \{ \alpha \in \Lambda \;  :  \; \alpha \leq \beta \}$, the branch of $\Lambda$ determined by $\beta$. Then $(\Lambda_\beta, \leq)$ is a directed poset (see \cite[Section 1.4]{AG}). Let $\beta \in \Lambda$ be fixed. For each $p \in X_\beta$, $\mathcal{H}_{\beta, p} = \bigcup\limits_{\alpha \in \Lambda_\beta} \mathcal{H}_{\alpha, p}$, and so $\{ \mathcal{H}_{\beta, p}; \mathcal{E}_{\beta, p} = \{\mathcal{H}_{\alpha, p}\}_{\alpha \in \Lambda_\beta};  \mathcal{H}_{\beta, p} \big \}$ is a quantized domain. Associated with each $\alpha \in \Lambda_\beta$, we define a closed subspace $\mathcal{H}^{(\alpha)}_{\beta}$ of the Hilbert space $\int^{\oplus}_{X_\beta} \mathcal{H}_{\beta, p} \, \mathrm{d} \mu_\beta$ as 
\begin{equation}
\mathcal{H}^{(\alpha)}_{\beta} := \left \{ u \in \int^{\oplus}_{X_\beta} \mathcal{H}_{\beta, p} \, \mathrm{d} \mu_\beta ~~ : ~~  u(p) \in \mathcal{H}_{\alpha, p} \; \; \text{for a.e.} \; \; p \in X_\alpha  \; \; \; \text{and} \; \;  \text{supp}(u) \subseteq X_\alpha \right \}.
\end{equation}
Note that the family $\mathcal{E}_{\beta} = \{\mathcal{H}^{(\alpha)}_{\beta} \}_{\alpha \in \Lambda_\beta}$ is a strictly inductive system of Hilbert spaces with the union $\bigcup\limits_{\alpha \in \Lambda_\beta} \mathcal{H}^{(\alpha)}_{\beta} = \int^{\oplus}_{X_\beta} \mathcal{H}_{\beta, p} \, \mathrm{d} \mu_\beta$, that is, $\left \{ \int^{\oplus}_{X_\beta} \mathcal{H}_{\beta, p} \, \mathrm{d} \mu_\beta; \mathcal{E}_{\beta} = \{\mathcal{H}^{(\alpha)}_{\beta} \}_{\alpha \in \Lambda_\beta};  \int^{\oplus}_{X_\beta} \mathcal{H}_{\beta, p} \, \mathrm{d} \mu_\beta \right \}$ is a quantized domain. Following the notations, $C^\ast_{\mathcal{E}_\beta} \left( \int^{\oplus}_{X_\beta} \mathcal{H}_{\beta, p} \, \mathrm{d} \mu_\beta \right)$ consists of all bounded operators on the Hilbert space $\int^{\oplus}_{X_\beta} \mathcal{H}_{\beta, p} \, \mathrm{d} \mu_\beta$ for which the Hilbert subspace $\mathcal{H}^{(\alpha)}_{\beta}$ is a reducing subspace, whenever $\alpha \in \Lambda_\beta$. In particular, $C^\ast_{\mathcal{E}_\beta, \text{DEC}} \left( \int^{\oplus}_{X_\beta} \mathcal{H}_{\beta, p} \, \mathrm{d} \mu_\beta \right)$ and $C^\ast_{\mathcal{E}_\beta, \text{DIAG}} \left( \int^{\oplus}_{X_\beta} \mathcal{H}_{\beta, p} \, \mathrm{d} \mu_\beta \right)$ consists of all decomposable and diagonalizable bounded operators in $C^\ast_{\mathcal{E}_\beta} \left( \int^{\oplus}_{X_\beta} \mathcal{H}_{\beta, p} \, \mathrm{d} \mu_\beta \right)$ respectively.

\noindent
\textbf{Claim 1:} For each  $\beta\in \Lambda$, \; $C^\ast_{\mathcal{E}_\beta, \text{DEC}} \left( \int^{\oplus}_{X_\beta} \mathcal{H}_{\beta, p} \, \mathrm{d} \mu_\beta \right)$ is a von Neumann algebra in $\mathcal{B}\left( \int^{\oplus}_{X_\beta} \mathcal{H}_{\beta, p} \, \mathrm{d} \mu_\beta \right)$. 

From \ref{obs;M DEC and M DIAG are star algebras} of Observations \ref{obs; 1}, we know that this space is a $\ast$-algebra. Thus to prove our claim, it suffices to show that it is closed in the strong operator topology. To see this, let $\left \{ T_n \right \}_{n \in \mathbb{N}}$ be a sequence in $C^\ast_{\mathcal{E}_\beta, \text{DEC}} \left( \int^{\oplus}_{X_\beta} \mathcal{H}_{\beta, p} \, \mathrm{d} \mu_\beta \right)$ such that $T_n \rightarrow T$ in the strong operator topology on $\mathcal{B}\left( \int^{\oplus}_{X_\beta} \mathcal{H}_{\beta, p} \, \mathrm{d} \mu_\beta \right)$. Here $T$ is decomposable since  $C^\ast_{\mathcal{E}_\beta, \text{DEC}} \left(\int^{\oplus}_{X_\beta} \mathcal{H}_{\beta, p} \, \dmu \right)$ is contained in the von Neumann algebra of all decomposable operators on the Hilbert space $\int^{\oplus}_{X_\beta} \mathcal{H}_{\beta, p} \, \dmu$, and thus $T =  \int^{\oplus}_{X_\beta} T_{p} \, \dmu$. Now we show that for every $\alpha \in \Lambda_\beta$ the subspace $\mathcal{H}^{(\alpha)}_{\beta}$ is reducing for $T$. Fix $\alpha \in \Lambda_\beta$ and let $x \in \mathcal{H}^{(\alpha)}_{\beta}$. Then $T_nx \in \mathcal{H}^{(\alpha)}_{\beta}$ for each $n \in \mathbb{N}$, and so $Tx \in \mathcal{H}^{(\alpha)}_{\beta}$, since $\big \|  T_nx - Tx \big \|_{\mathcal{H}^{(\alpha)}_{\beta}} \rightarrow 0$, as $n \to \infty$. Therefore, the subspace $\mathcal{H}^{(\alpha)}_{\beta}$ is reducing for $T$ for every $\alpha \in \Lambda_\beta$ and hence $T \in C^\ast_{\mathcal{E}_\beta, \text{DEC}} \left( \int^{\oplus}_{X_\beta} \mathcal{H}_{\beta, p} \, \mathrm{d} \mu_\beta \right)$, which proves our Claim 1.

\noindent
\textbf{Claim 2:} For each  $\beta\in \Lambda$, \; $C^\ast_{\mathcal{E}_\beta, \text{DIAG}} \left( \int^{\oplus}_{X_\beta} \mathcal{H}_{\beta, p} \, \mathrm{d} \mu_\beta \right)$ is an abelian von Neumann algebra in $\mathcal{B}\left( \int^{\oplus}_{X_\beta} \mathcal{H}_{\beta, p} \, \mathrm{d} \mu_\beta \right)$. 

For a given $g \in \text{L}^\infty(X_\beta, \mu_\beta)$,  we define an operator $T_g$ in $C^\ast_{\mathcal{E}_\beta, \text{DIAG}} \left( \int^{\oplus}_{X_\beta} \mathcal{H}_{\beta, p} \, \mathrm{d} \mu_\beta \right)$ as   $T_g := \diX g(p) \cdot \mathrm{Id}_{\mathcal{H}_{\beta, p}}  , \mathrm{d} \mu_\beta$. Conversely, for $T \in C^\ast_{\mathcal{E}_\beta, \text{DIAG}} \left( \int^{\oplus}_{X_\beta} \mathcal{H}_{\beta, p} \, \mathrm{d} \mu_\beta \right)$, then there exists a function $f \in \text{L}^\infty(X_\beta, \mu_\beta)$ such that  $T = T_f = \diX f(p) \cdot \mathrm{Id}_{\mathcal{H}_{\beta, p}}  , \mathrm{d} \mu_\beta$  (see (\ref{def;Diagbo}) of Definition \ref{def;Debo}). Thus $C^\ast_{\mathcal{E}_\beta, \text{DIAG}} \left( \int^{\oplus}_{X_\beta} \mathcal{H}_{\beta, p} \, \mathrm{d} \mu_\beta \right)$ corresponds to the space $\text{L}^\infty(X_\beta, \mu_\beta)$. Then Claim 2 follows from \cite[Part II, Chapter 2, Section 4]{DixV}.



\noindent \textbf{Claim 3:} $C^\ast_{\mathcal{E}, \text{DEC}} \left(\displaystyle \dilX \mathcal{D}_p \, \dmu \right) = \varprojlim\limits_{\alpha \in \Lambda} C^\ast_{\mathcal{E}_\alpha, \text{DEC}} \left( \int^{\oplus}_{X_\alpha} \mathcal{H}_{\alpha, p} \, \mathrm{d} \mu_\alpha \right )$.

Let $T = \int^\oplus_{X_\beta} T_{\beta, p} \, \mathrm{d} \mu_\beta \in  C^\ast_{\mathcal{E}_\beta, \text{DEC}} \left(\int^{\oplus}_{X_\beta} \mathcal{H}_{\beta, p} \, \mathrm{d} \mu_\beta \right)$. Then $T_{\beta, p} : \mathcal{H}_{\beta, p} \rightarrow \mathcal{H}_{\beta, p}$ is bounded and $\mathcal{H}_{\alpha, p}$ is reducing for $T_{\beta, p}$ for each $\alpha \in \Lambda_\beta$ for a.e. $p \in X_\beta$. This implies that the map $\phi_{\alpha, \beta} : C^\ast_{\mathcal{E}_\beta, \text{DEC}} \left(\int^{\oplus}_{X_\beta} \mathcal{H}_{\beta, p} \, \mathrm{d} \mu_\beta \right) \rightarrow C^\ast_{\mathcal{E}_\alpha, \text{DEC}} \left(\int^{\oplus}_{X_\alpha} \mathcal{H}_{\alpha, p} \, \mathrm{d} \mu_\alpha \right)$ defined by 
\begin{equation} \label{eq; phi alpha beta}
\phi_{\alpha, \beta} \left (T = \int^\oplus_{X_\beta} T_{\beta, p} \, \mathrm{d} \mu_\beta \right ) := \int^\oplus_{X_\alpha} T_{\beta, p}\big|_{\mathcal{H}_{\alpha, p}} \, \mathrm{d} \mu_\alpha,
\end{equation}
is a well defined normal $\ast$-homomorphism, whenever $\alpha \in \Lambda_\beta$. Moreover, $\phi_{\alpha, \alpha} = \mathrm{Id}_{\int^{\oplus}_{X_\alpha} \mathcal{H}_{\alpha, p} \, \mathrm{d} \mu_\alpha}$ and $\phi_{\alpha, \beta} \circ \phi_{\beta, \gamma} = \phi_{\alpha, \gamma}$, whenever $\alpha \leq \beta \leq \gamma$. This shows that $\left (  \left \{ C^\ast_{\mathcal{E}_\alpha, \text{DEC}} \left(\int^{\oplus}_{X_\alpha} \mathcal{H}_{\alpha, p} \, \mathrm{d} \mu_\alpha \right) \right \}_{\alpha \in \Lambda}, \{ \phi_{\alpha, \beta} \}_{\alpha \leq \beta} \right )$ is a projective system of von Neumann algebras. 

Recall from (\ref{obs;M DEC and M DIAG are star algebras}) of Observations \ref{obs; 1} that $C^\ast_{\mathcal{E}, \text{DEC}} \left(\displaystyle \dilX \mathcal{D}_p \, \dmu \right)$ is a locally convex $\ast$-algebra with respect to the $C^\ast$-seminorm given by $T \mapsto \left  \| T\big|_{\mathcal{H}_\alpha}  \right \|$ for each $\alpha \in \Lambda$ (see Example \ref{ex;lca} and Equation \eqref{eq; norm of T restricted to H alpha}). In view of  Equations \eqref{eq;iso}, \eqref{eq;restriction of DecLBO}, we define a map $\phi_\alpha : C^\ast_{\mathcal{E}, \text{DEC}} \left(\displaystyle \dilX \mathcal{D}_p \, \dmu \right) \rightarrow   C^\ast_{\mathcal{E}_\alpha, \text{DEC}} \left(\int^{\oplus}_{X_\alpha} \mathcal{H}_{\alpha, p} \, \mathrm{d} \mu_\alpha \right)$ by 
\begin{equation} \label{eq; phi alpha}
\phi_\alpha \left ( \displaystyle \dilX T_p \, \dmu \right ) := \int^\oplus_{X_\alpha} T_{p} \big|_{\mathcal{H}_{\alpha, p}} \,  \mathrm{d} \mu_\alpha
\end{equation}
is a continuous $\ast$-homomorphism for each $\alpha \in \Lambda$. In addition, whenever $\alpha \leq \beta$, 
\begin{align*}
\phi_{\alpha, \beta} \circ \phi_\beta \left ( \displaystyle \dilX T_p \, \dmu \right ) = \phi_{\alpha, \beta} \left ( \int^\oplus_{X_\beta} T_{p} \big|_{\mathcal{H}_{\beta, p}} \,  \mathrm{d} \mu_\beta \right ) 
&= \int^\oplus_{X_\alpha} T_{p} \big|_{\mathcal{H}_{\alpha, p}} \,  \mathrm{d} \mu_\alpha \\
&= \phi_\alpha \left ( \displaystyle \dilX T_p \, \dmu \right ).
\end{align*}
This shows that the pair $ \left (C^\ast_{\mathcal{E}, \text{DEC}} \left(\displaystyle \dilX \mathcal{D}_p \, \dmu \right) , \{ \phi_\alpha \}_{\alpha \in \Lambda} \right )$ is compatible with the projective system $\left (  \left \{ C^\ast_{\mathcal{E}_\alpha, \text{DEC}} \left(\int^{\oplus}_{X_\alpha} \mathcal{H}_{\alpha, p} \, \mathrm{d} \mu_\alpha \right) \right \}_{\alpha \in \Lambda}, \{ \phi_{\alpha, \beta} \}_{\alpha \leq \beta} \right )$  of von Neumann algebras \big (for definitions, see Subsection 1.1 of \cite{AG} \big ). 

To prove that $ \left (C^\ast_{\mathcal{E}, \text{DEC}} \left(\displaystyle \dilX \mathcal{D}_p \, \dmu \right) , \{ \phi_\alpha \}_{\alpha \in \Lambda} \right )$ is the projective limit (in the category of topological algebras) of  $\left (  \left \{ C^\ast_{\mathcal{E}_\alpha, \text{DEC}} \left(\int^{\oplus}_{X_\alpha} \mathcal{H}_{\alpha, p} \, \mathrm{d} \mu_\alpha \right) \right \}_{\alpha \in \Lambda}, \{ \phi_{\alpha, \beta} \}_{\alpha \leq \beta} \right )$,  we consider a locally convex $\ast$-algebra $\mathcal{W}$ with a family $\{ \psi_\alpha \}_{\alpha \in \Lambda}$, where $\psi_\alpha : \mathcal{W} \rightarrow  C^\ast_{\mathcal{E}_\alpha, \text{DEC}} \left(\int^{\oplus}_{X_\alpha} \mathcal{H}_{\alpha, p} \, \mathrm{d} \mu_\alpha \right)$ is a continuous $\ast$-homomorphism for each $\alpha \in \Lambda$.  Let the pair $ \big (\mathcal{W} , \{ \psi_\alpha \}_{\alpha \in \Lambda} \big )$ be compatible with the projective system given by $\left (  \left \{ C^\ast_{\mathcal{E}_\alpha, \text{DEC}} \left(\int^{\oplus}_{X_\alpha} \mathcal{H}_{\alpha, p} \, \mathrm{d} \mu_\alpha \right) \right \}_{\alpha \in \Lambda}, \{ \phi_{\alpha, \beta} \}_{\alpha \leq \beta} \right )$, that is,
\begin{equation} \label{eqn; compatible for W and phi alpha}
\phi_{\alpha, \beta} \circ \psi_\beta = \psi_\alpha \; \; \text{whenever} \; \; \alpha \leq \beta.
\end{equation}
For each fixed $w \in \mathcal{W}$ and $\alpha \in \Lambda$, since $\psi_\alpha(w)$ is decomposable, we have $\psi_\alpha(w) = \int^\oplus_{X_\alpha}  \psi_\alpha(w)_p \, \mathrm{d} \mu_\alpha$ for some family $\left \{ \psi_\alpha(w)_p  \right \}_{p \in X_\alpha}$ of bounded operators on $\mathcal{H}_{\alpha, p}$ satisfying the property that $\mathcal{H}_{\delta, p}$ is reducing subspace for $\psi_\alpha(w)_p$ for each $\delta \in \Lambda_{\alpha}$. Whenever $\alpha \leq \beta$, for each $w \in \mathcal{W}$, by following Equation \eqref{eqn; compatible for W and phi alpha}, we obtain
\begin{align*}
\int^\oplus_{X_\alpha}  \psi_\alpha(w)_p \, \mathrm{d} \mu_\alpha = \psi_\alpha(w) &= \phi_{\alpha, \beta} \circ \psi_\beta(w) \\
&= \phi_{\alpha, \beta}  \left ( \int^\oplus_{X_\beta}  \psi_\beta(w)_p \, \mathrm{d} \mu_\beta \right )   \\
&= \int^\oplus_{X_\alpha}  \psi_\beta(w)_p\big|_{\mathcal{H}_{\alpha, p}} \, \mathrm{d} \mu_\alpha.
\end{align*}
Thus, there exists a measurable set $E^w_{\alpha, \beta} \subseteq X_\alpha$ such that $\mu_\alpha(E^w_{\alpha, \beta}) = 0$ and $ \psi_\beta(w)_p \big |_{\mathcal{H}_{\alpha, p}} \neq  \psi_\alpha(w)_p$ for every $p \in E^w_{\alpha, \beta}$. 

Let $E^w_\alpha := \bigcup\limits_{\beta \in \Lambda, \alpha \leq \beta} E^w_{\alpha, \beta}$ and $E^w :=  \bigcup\limits_{\alpha \in \Lambda} E^w_{\alpha}$. Since $\Lambda$ is countable, $\mu_\alpha(E^w_\alpha) = 0$ for each $\alpha$ and $E^w$ is a measurable set. Moreover, from Proposition \ref{prop;m}, we get $\mu(E^w) = 0$. On the other hand, if $\mu$ is a counting measure on $X$, then for any $\alpha \leq \beta$, the set $E^w_{\alpha, \beta} = \emptyset$ and thus $E^w = \emptyset$. So in both cases (that is, when $\Lambda$ is countable or $\mu$ is a counting measure on $X$), we consider the set $X \setminus E^w$ and without loss of generality we again denote it by $X$.
Next, consider a family $\left \{ T^w_{\alpha, p} : \mathcal{H}_{\alpha, p} \rightarrow \mathcal{H}_{\alpha, p} \; \; : \; \; \alpha \in \Lambda, \; p \in X \right \}$ given by
\begin{equation*}
T^w_{\alpha, p} := \begin{cases}
\psi_\alpha(w)_p, & \text{if} \;\; p \in X_\alpha ; \\
\psi_\beta(w)_p\big|_{\mathcal{H}_{\alpha, p}}, & \text{if} \;\; p \in X \setminus X_\alpha;
\end{cases}
\end{equation*}
for any $\beta \in \Lambda$ such $\alpha \leq \beta$ and $p \in X_\beta$. Here, note that the choice of $\beta \in \Lambda$  does not make any difference in the definition of $T^w_{\alpha, p}$, as we have $\psi_\beta(w)_p\big|_{\mathcal{H}_{\alpha, p}} = \psi_\gamma(w)_p\big|_{\mathcal{H}_{\alpha, p}}$, whenever $\alpha \leq \beta, \gamma$  and $p \in X_\beta$ also $p \in X_\gamma$. Using this, for each fixed $w \in \mathcal{W}$ and $p \in X$, define a locally bounded operator  $T^w_p \in C^\ast_{\mathcal{E}_{p}} (\mathcal{D}_{p})$ as $T^w_p := \varprojlim\limits_{\alpha \in \Lambda} T^w_{\alpha, p}$ (see Equation \eqref{eq; inverese limit of bounded operators} for this notation).  Now we define a continuous linear map $\Psi : \mathcal{W} \rightarrow C^\ast_{\mathcal{E}, \text{DEC}} \left(\displaystyle \dilX \mathcal{D}_p \, \dmu \right)$ as
\begin{equation*}
\Psi(w) := \displaystyle \dilX T^w_p  \, \dmu \; \; \; \text{for each} \; \; w \in \mathcal{W}.
\end{equation*}
Next, for $w \in \mathcal{W}$ and $\alpha \in \Lambda$, by following Equation \eqref{eq; phi alpha}, we obtain 
\begin{equation*}
\phi_\alpha \circ \Psi(w) = \phi_\alpha \left ( \displaystyle \dilX T^w_p  \, \dmu \right ) = \int^\oplus_{X_\alpha} T^w_p \big|_{\mathcal{H}_{\alpha, p}} \,  \mathrm{d} \mu_\alpha = \int^\oplus_{X_\alpha} \psi_\alpha(w)_p \,  \mathrm{d} \mu_\alpha = \psi_\alpha(w).
\end{equation*}
Since $w \in \mathcal{W}$ and $\alpha \in \Lambda$ were chosen arbitrarily, we get $\phi_\alpha \circ \Psi = \psi_\alpha$ for each $\alpha \in \Lambda$. Now it remains to show that such a map $\Psi$ is unique. Suppose there exists another continuous linear map $\hat{\Psi} : \mathcal{W} \rightarrow C^\ast_{\mathcal{E}, \text{DEC}} \left(\displaystyle \dilX \mathcal{D}_p \, \dmu \right)$ such that $\phi_\alpha \circ \hat{\Psi} = \psi_\alpha$ for all $\alpha \in \Lambda$. For $w \in \mathcal{W}$, let $\Psi(w) = \displaystyle \dilX T^w_p \, \dmu$ and $\hat{\Psi}(w) = \displaystyle \dilX \hat{T}^w_p \, \dmu$. Then for each fixed $\alpha \in \Lambda$, by following Equation \eqref{eq; phi alpha}, we get 
\begin{equation*}
 \int^\oplus_{X_\alpha} T^w_{p} \big|_{\mathcal{H}_{\alpha, p}} \,  \mathrm{d} \mu_\alpha = \phi_\alpha \circ \Psi(w) = \psi_\alpha(w) = \phi_\alpha \circ \hat{\Psi}(w)  = \int^\oplus_{X_\alpha} \hat{T}^w_{p} \big|_{\mathcal{H}_{\alpha, p}} \,  \mathrm{d} \mu_\alpha.
\end{equation*}
As the above equation holds for each $\alpha \in \Lambda$, we get $\Psi(w) = \hat{\Psi}(w)$. Since $w \in \mathcal{W}$ was chosen arbitrarily, we get  that the map $\Psi$ is unique. Therefore, by following the universal property given in \cite{NCP} by the uniqueness of the projective limit (see  \cite[Section 1.1]{AG}), we obtain
\begin{align*}
C^\ast_{\mathcal{E}, \text{DEC}} \left(\displaystyle \dilX \mathcal{D}_p \, \dmu \right) = \varprojlim\limits_{\alpha \in \Lambda} C^\ast_{\mathcal{E}_\alpha, \text{DEC}} \left(\int^{\oplus}_{X_\alpha} \mathcal{H}_{\alpha, p} \, \mathrm{d} \mu_\alpha \right).
\end{align*}
This proves Claim 3. 

\noindent
\textbf{Proof of (a):} By using Claim 1, Claim 3 and Definition \ref{def; lva 1}, we get that $C^\ast_{\mathcal{E}, \text{DEC}} \left(\displaystyle \dilX \mathcal{D}_p \, \dmu \right) $ is a locally von Neumann algebra.

\textbf{Claim 4:} $C^\ast_{\mathcal{E}, \text{DIAG}} \left(\displaystyle \dilX \mathcal{D}_p \, \dmu \right) = \varprojlim\limits_{\alpha \in \Lambda} C^\ast_{\mathcal{E}_\alpha, \text{DIAG}} \left(\int^{\oplus}_{X_\alpha} \mathcal{H}_{\alpha, p} \, \mathrm{d} \mu_\alpha \right).$

This proof follows on similar lines as the proof of Claim 3. However, we provide a detailed proof for the sake of completeness. Let $\int^\oplus_{X_\beta} T_{\beta, p} \, \mathrm{d} \mu_\beta \in  C^\ast_{\mathcal{E}_\beta, \text{DIAG}} \left(\int^{\oplus}_{X_\beta} \mathcal{H}_{\beta, p} \, \mathrm{d} \mu_\beta \right)$. Then $T_{\beta, p} = c_{\beta, p} \cdot \mathrm{Id}_{\mathcal{H}_{\beta,p}}$, where $c_{\beta, p} \in \mathbb{C}$ for a.e. $p \in X_\beta$. Let $\beta\in \Lambda$ be fixed and observe the fact that $C^\ast_{\mathcal{E}_\beta, \text{DIAG}} \left(\int^{\oplus}_{X_\beta} \mathcal{H}_{\beta, p} \, \mathrm{d} \mu_\beta \right) \subseteq C^\ast_{\mathcal{E}_\beta, \text{DEC}} \left(\int^{\oplus}_{X_\beta} \mathcal{H}_{\beta, p} \, \mathrm{d} \mu_\beta \right)$. Then by following Equation \eqref{eq; phi alpha beta}, we get
\begin{equation*}
\phi_{\alpha, \beta} \left (\int^\oplus_{X_\beta} c_{\beta, p} \cdot \mathrm{Id}_{\mathcal{H}_{\beta,p}} \, \mathrm{d} \mu_\beta \right ) =  \int^\oplus_{X_\alpha} c_{\beta, p} \cdot \mathrm{Id}_{\mathcal{H}_{\alpha, p}} \, \mathrm{d} \mu_\alpha, \; \; \; \text{whenever} \; \; \alpha \leq \beta.
\end{equation*}
This shows that $\phi_{\alpha, \beta} \left ( C^\ast_{\mathcal{E}_\beta, \text{DIAG}} \left(\int^{\oplus}_{X_\beta} \mathcal{H}_{\beta, p} \, \mathrm{d} \mu_\beta \right) \right ) \subseteq C^\ast_{\mathcal{E}_\alpha, \text{DIAG}} \left(\int^{\oplus}_{X_\alpha} \mathcal{H}_{\alpha, p} \, \mathrm{d} \mu_\alpha \right)$, whenever $\alpha \leq \beta$. Clearly $\left \{ \sigma_{\alpha, \beta} := \phi_{\alpha, \beta} \big |_{C^\ast_{\mathcal{E}_\beta, \text{DIAG}} \left(\int^{\oplus}_{X_\beta} \mathcal{H}_{\beta, p} \, \mathrm{d} \mu_\beta \right) } \right \}_{\alpha \leq \beta}$ is a family of  normal $\ast$-homomorphisms of von Neumann algebras with the property that $\sigma_{\alpha, \alpha} = \mathrm{Id}_{\int^{\oplus}_{X_\alpha} \mathcal{H}_{\alpha, p} \, \mathrm{d} \mu_\alpha}$ and $\sigma_{\alpha, \beta} \circ \sigma_{\beta, \gamma} = \sigma_{\alpha, \gamma}$, whenever $\alpha \leq \beta \leq \gamma$. This implies $\left (  \left \{ C^\ast_{\mathcal{E}_\alpha, \text{DIAG}} \left(\int^{\oplus}_{X_\alpha} \mathcal{H}_{\alpha, p} \, \mathrm{d} \mu_\alpha \right) \right \}_{\alpha \in \Lambda}, \{ \sigma_{\alpha, \beta} \}_{\alpha \leq \beta} \right )$ is a projective system of abelian von Neumann algebras.

Recall from (\ref{obs;M DEC and M DIAG are star algebras}) of Observations \ref{obs; 1} that  $C^\ast_{\mathcal{E}, \text{DIAG}} \left(\displaystyle \dilX \mathcal{D}_p \, \dmu \right)$ is a locally convex $\ast$-algebra. Also, we have $C^\ast_{\mathcal{E}, \text{DIAG}} \left(\displaystyle \dilX \mathcal{D}_p \, \dmu \right) \subseteq C^\ast_{\mathcal{E}, \text{DEC}} \left(\displaystyle \dilX \mathcal{D}_p \, \dmu \right)$ (see Equation \eqref{eqn; containment}). From Equation \eqref{eq; phi alpha}, it follows that $\phi_\alpha \left ( C^\ast_{\mathcal{E}, \text{DIAG}} \left(\displaystyle \dilX \mathcal{D}_p \, \dmu \right) \right ) = C^\ast_{\mathcal{E}_\alpha, \text{DIAG}} \left(\int^{\oplus}_{X_\alpha} \mathcal{H}_{\alpha, p} \, \mathrm{d} \mu_\alpha \right)$ for each $\alpha \in \Lambda$. As a consequence, for each $\alpha \in \Lambda$, the map 
\begin{equation*}
\sigma_\alpha := \phi_\alpha \big |_{C^\ast_{\mathcal{E}, \text{DIAG}} \left(\displaystyle \dilX \mathcal{D}_p \, \dmu \right)} : C^\ast_{\mathcal{E}, \text{DIAG}} \left(\displaystyle \dilX \mathcal{D}_p \, \dmu \right) \rightarrow C^\ast_{\mathcal{E}_\alpha, \text{DIAG}} \left(\int^{\oplus}_{X_\alpha} \mathcal{H}_{\alpha, p} \, \mathrm{d} \mu_\alpha \right)    
\end{equation*}
is a continuous $\ast$-homomorphism. Moreover, for any $T \in C^\ast_{\mathcal{E}, \text{DIAG}} \left(\displaystyle \dilX \mathcal{D}_p \, \dmu \right)$ and for any $\alpha \leq \beta$, we have
\begin{equation*}
\sigma_{\alpha, \beta} \circ \sigma_\beta ( T ) = \phi_{\alpha, \beta} \circ \phi_\beta(T) = \phi_\alpha (T) = \sigma_\alpha(T).
\end{equation*}
This shows that the pair $ \left (C^\ast_{\mathcal{E}, \text{DIAG}} \left(\displaystyle \dilX \mathcal{D}_p \, \dmu \right), \{ \sigma_\alpha \}_{\alpha \in \Lambda} \right )$ is compatible with the projective system $\left (  \left \{ C^\ast_{\mathcal{E}_\alpha, \text{DIAG}} \left(\int^{\oplus}_{X_\alpha} \mathcal{H}_{\alpha, p} \, \mathrm{d} \mu_\alpha \right) \right \}_{\alpha \in \Lambda}, \{ \sigma_{\alpha, \beta} \}_{\alpha \leq \beta} \right )$  of abelian von Neumann algebras \big (for definition, see Subsection 1.1 of \cite{AG} \big ). 

To prove that $ \left (C^\ast_{\mathcal{E}, \text{DIAG}} \left(\displaystyle \dilX \mathcal{D}_p \, \dmu \right) , \{ \sigma_\alpha \}_{\alpha \in \Lambda} \right )$ is the projective limit (in the category of topological algebras) of  $\left (  \left \{ C^\ast_{\mathcal{E}_\alpha, \text{DIAG}} \left(\int^{\oplus}_{X_\alpha} \mathcal{H}_{\alpha, p} \, \mathrm{d} \mu_\alpha \right) \right \}_{\alpha \in \Lambda}, \{ \sigma_{\alpha, \beta} \}_{\alpha \leq \beta} \right )$,  we consider a locally convex $\ast$-algebra $\mathcal{W}$ with a family $\{ \psi_\alpha \}_{\alpha \in \Lambda}$, where $\psi_\alpha : \mathcal{W} \rightarrow  C^\ast_{\mathcal{E}_\alpha, \text{DIAG}} \left(\int^{\oplus}_{X_\alpha} \mathcal{H}_{\alpha, p} \, \mathrm{d} \mu_\alpha \right)$ is a continuous $\ast$-homomorphism for each $\alpha \in \Lambda$.  Let the pair $ \big (\mathcal{W} , \{ \psi_\alpha \}_{\alpha \in \Lambda} \big )$ be compatible with the projective system given by $\left (  \left \{ C^\ast_{\mathcal{E}_\alpha, \text{DIAG}} \left(\int^{\oplus}_{X_\alpha} \mathcal{H}_{\alpha, p} \, \mathrm{d} \mu_\alpha \right) \right \}_{\alpha \in \Lambda}, \{ \sigma_{\alpha, \beta} \}_{\alpha \leq \beta} \right )$, that is,
\begin{equation} \label{eqn; compatible for W and sigma alpha}
\sigma_{\alpha, \beta} \circ \psi_\beta = \psi_\alpha \; \; \text{whenever} \; \; \alpha \leq \beta.
\end{equation}
For each fixed $w \in \mathcal{W}$ and $\alpha \in \Lambda$, since $\psi_\alpha(w)$ is diagonalizable, we have $\psi_\alpha(w) = \int^\oplus_{X_\alpha}  \psi_\alpha(w)_p \, \mathrm{d} \mu_\alpha$ for some family $\left \{ \psi_\alpha(w)_p \in  \mathbb{C} \cdot \mathrm{Id}_{\mathcal{H}_{\alpha,p}} \right \}_{p \in X_\alpha}$. Whenever $\alpha \leq \beta$, for each $w \in \mathcal{W}$, by following Equation \eqref{eqn; compatible for W and sigma alpha}, we obtain
\begin{align*}
\int^\oplus_{X_\alpha}  \psi_\alpha(w)_p \, \mathrm{d} \mu_\alpha = \psi_\alpha(w) &= \sigma_{\alpha, \beta} \circ \psi_\beta(w) \\
&= \sigma_{\alpha, \beta}  \left ( \int^\oplus_{X_\beta}  \psi_\beta(w)_p \, \mathrm{d} \mu_\beta \right )   \\
&= \int^\oplus_{X_\alpha}  \psi_\beta(w)_p\big|_{\mathcal{H}_{\alpha, p}} \, \mathrm{d} \mu_\alpha.
\end{align*}
Thus, there exists a measurable set $E^w_{\alpha, \beta} \subseteq X_\alpha$ such that $\mu_\alpha(E^w_{\alpha, \beta}) = 0$ and $ \psi_\beta(w)_p \big |_{\mathcal{H}_{\alpha, p}} \neq  \psi_\alpha(w)_p$ for every $p \in E^w_{\alpha, \beta}$. 

Let $E^w_\alpha := \bigcup\limits_{\beta \in \Lambda, \alpha \leq \beta} E^w_{\alpha, \beta}$ and $E^w :=  \bigcup\limits_{\alpha \in \Lambda} E^w_{\alpha}$. Since $\Lambda$ is countable, $\mu_\alpha(E^w_\alpha) = 0$ for each $\alpha$ and $E^w$ is a measurable set. Moreover, from Proposition \ref{prop;m}, we get $\mu(E^w) = 0$. On the other hand, if $\mu$ is a counting measure on $X$, then for any $\alpha \leq \beta$, the set $E^w_{\alpha, \beta} = \emptyset$ and thus $E^w = \emptyset$. So in both cases (that is, when $\Lambda$ is countable or $\mu$ is a counting measure on $X$), we consider the set $X \setminus E^w$ and without loss of generality we again denote it by $X$.
Next, consider a family $\left \{ T^w_{\alpha, p} \in \mathbb{C} \cdot \mathrm{Id}_{\mathcal{H}_{\alpha,p}} \; \; : \; \; \alpha \in \Lambda, \; p \in X \right \}$ given by
\begin{equation*}
T^w_{\alpha, p} := \begin{cases}
\psi_\alpha(w)_p, & \text{if} \;\; p \in X_\alpha ; \\
\psi_\beta(w)_p\big|_{\mathcal{H}_{\alpha, p}}, & \text{if} \;\; p \in X \setminus X_\alpha;
\end{cases}
\end{equation*}
for any $\beta \in \Lambda$ such $\alpha \leq \beta$ and $p \in X_\beta$. Here, note that the choice of $\beta \in \Lambda$  does not make any difference in the definition of $T^w_{\alpha, p}$, as we have $\psi_\beta(w)_p\big|_{\mathcal{H}_{\alpha, p}} = \psi_\gamma(w)_p\big|_{\mathcal{H}_{\alpha, p}}$, whenever $\alpha \leq \beta, \gamma$  and $p \in X_\beta$ also $p \in X_\gamma$. Using this, for each fixed $w \in \mathcal{W}$ and $p \in X$, define a locally bounded operator  $T^w_p \in C^\ast_{\mathcal{E}_{p}} (\mathcal{D}_{p})$ as $T^w_p := \varprojlim\limits_{\alpha \in \Lambda} T^w_{\alpha, p}$ (see Equation \eqref{eq; inverese limit of bounded operators}). In particular, we get $T^w_p \in \mathbb{C} \cdot \mathrm{Id}_{\mathcal{D}_{p}}$. Now we define a continuous linear map $\Psi : \mathcal{W} \rightarrow C^\ast_{\mathcal{E}, \text{DIAG}} \left(\displaystyle \dilX \mathcal{D}_p \, \dmu \right)$ as
\begin{equation*}
\Psi(w) := \displaystyle \dilX T^w_p  \, \dmu \; \; \; \text{for each} \; \; w \in \mathcal{W}.
\end{equation*}
Next, for $w \in \mathcal{W}$ and $\alpha \in \Lambda$, we obtain 
\begin{equation*}
\sigma_\alpha \circ \Psi(w) = \sigma_\alpha \left ( \displaystyle \dilX T^w_p  \, \dmu \right ) = \int^\oplus_{X_\alpha} T^w_p \big|_{\mathcal{H}_{\alpha, p}} \,  \mathrm{d} \mu_\alpha = \int^\oplus_{X_\alpha} \psi_\alpha(w)_p \,  \mathrm{d} \mu_\alpha = \psi_\alpha(w).
\end{equation*}
Since $w \in \mathcal{W}$ and $\alpha \in \Lambda$ were chosen arbitrarily, we get $\sigma_\alpha \circ \Psi = \psi_\alpha$ for each $\alpha \in \Lambda$. Now it remains to show that such a map $\Psi$ is unique. Suppose there exists another map $\hat{\Psi} : \mathcal{W} \rightarrow C^\ast_{\mathcal{E}, \text{DIAG}} \left(\displaystyle \dilX \mathcal{D}_p \, \dmu \right)$ such that for all $\alpha \in \Lambda$, we have $\sigma_\alpha \circ \hat{\Psi} = \psi_\alpha$. For $w \in \mathcal{W}$, let $\Psi(w) = \displaystyle \dilX T^w_p \, \dmu$ and $\hat{\Psi}(w) = \displaystyle \dilX \hat{T}^w_p \, \dmu$. Then for each fixed $\alpha \in \Lambda$, we get 
\begin{equation*}
 \int^\oplus_{X_\alpha} T^w_{p} \big|_{\mathcal{H}_{\alpha, p}} \,  \mathrm{d} \mu_\alpha = \sigma_\alpha \circ \Psi(w) = \psi_\alpha(w) = \sigma_\alpha \circ \hat{\Psi}(w)  = \int^\oplus_{X_\alpha} \hat{T}^w_{p} \big|_{\mathcal{H}_{\alpha, p}} \,  \mathrm{d} \mu_\alpha.
\end{equation*}
As the above equation holds for each $\alpha \in \Lambda$, we get $\Psi(w) = \hat{\Psi}(w)$. Since $w \in \mathcal{W}$ was chosen arbitrarily, we get  that the map $\Psi$ is unique. Therefore, by following the universal property given in \cite{NCP} by the uniqueness of the projective limit (see  \cite[Section 1.1]{AG}), we obtain
\begin{align*}
C^\ast_{\mathcal{E}, \text{DIAG}} \left(\displaystyle \dilX \mathcal{D}_p \, \dmu \right) = \varprojlim\limits_{\alpha \in \Lambda} C^\ast_{\mathcal{E}_\alpha, \text{DIAG}} \left(\int^{\oplus}_{X_\alpha} \mathcal{H}_{\alpha, p} \, \mathrm{d} \mu_\alpha \right).
\end{align*}
This proves Claim 4. 

\noindent
\textbf{Proof of (b):} By using Claim 2, Claim 4 and Definition \ref{def; lva 1}, we conclude that $C^\ast_{\mathcal{E}, \text{DIAG}} \left(\displaystyle \dilX \mathcal{D}_p \, \dmu \right)$ is an abelian locally von Neumann algebra. This completes the proof.
\end{proof}

We recall that for $\mathcal{M} \subseteq C^*_\mathcal{F}(\mathcal{D})$, where $\big \{ \mathcal{H}; \mathcal{F} = \{\mathcal{H}_{\alpha}\}_{\alpha \in \Lambda}; \mathcal{D} \big \}$  is a quantized domain, the commutant of $\mathcal{M}$ is denoted by $\mathcal{M}^\prime$ and is defined as 
\begin{equation} \label{eqn; commutant}
\mathcal{M}^\prime := \{ T \in C^*_\mathcal{F}(\mathcal{D}) ~~:~~ TS = ST ~~ \text{for all} ~~ S \in \mathcal{M} \}.    
\end{equation}
In the following remark, we obtain inclusion relations between locally von Neumann algebras described in Theorem \ref{thm;DEC and DIAG LvNA} and their commutants.

\begin{remark} \label{rem; commutants containments}
Let $\big(\Lambda, \leq \big)$ be a directed poset and $(X, \Sigma, \mu)$ be a locally measure space as mentioned in Note \ref{note; lms}. For each $p \in X$ assign a quantized domain $\big \{ \mathcal{H}_p; \mathcal{E}_p = \{\mathcal{H}_{\alpha, p}\}_{\alpha \in \Lambda}; \mathcal{D}_p \big \}$. Then we get the following containments
\begin{align*}
 C^\ast_{\mathcal{E}, \text{DEC}} \left(\displaystyle \dilX \mathcal{D}_p \, \dmu \right) &\subseteq  \left (C^\ast_{\mathcal{E}, \text{DIAG}} \left(\displaystyle \dilX \mathcal{D}_p \, \dmu \right) \right)^\prime ;  \\
 C^\ast_{\mathcal{E}, \text{DIAG}} \left(\displaystyle \dilX \mathcal{D}_p \, \dmu \right) &\subseteq \left (C^\ast_{\mathcal{E}, \text{DEC}} \left(\displaystyle \dilX \mathcal{D}_p \, \dmu \right) \right)^\prime.
\end{align*}
To see this, let $T \in C^\ast_{\mathcal{E}, \text{DEC}} \left(\displaystyle \dilX \mathcal{D}_p \, \dmu \right)$ and $S \in C^\ast_{\mathcal{E}, \text{DIAG}} \left(\displaystyle \dilX \mathcal{D}_p \, \dmu \right)$. Then by following Definition \ref{def;DecDiag(lbo)} we get a family $ \big \{ T_p \in C^\ast_{\mathcal{E}_p} \left(\mathcal{D}_p \right) \big \}_{p \in X}$ of locally bounded operators and a measurable function $f : X \rightarrow \mathbb{C}$ such that for any $u \in \displaystyle \dilX \mathcal{D}_p \, \dmu$, we have
\begin{align*}
(Tu)(p) = T_pu(p)  \; \; \text{and}  \; \;  (Su)(p) = f(p)u(p) \; \; \text{for a.e.} \; \; p \in X.
\end{align*}
For every $u = \displaystyle \dilX u(p) \, \dmu \in \displaystyle \dilX \mathcal{D}_p \, \dmu$, we have
\begin{align*}
\big (TS \big ) \left (\dilX u(p) \, \dmu \right ) &= T \left(\dilX f(p)u(p) \, \dmu \right) \\
&= \dilX T_p f(p) u(p) \, \dmu \\
&= \dilX  f(p) T_p u(p) \, \dmu \\
&= S \left (\dilX T_pu(p) \, \dmu \right) \\
&= \big  (ST \big ) \left(\dilX u(p) \, \dmu \right )
\end{align*}
Since $T$ and $S$ were arbitrarily chosen, we obtain the desired containments.  
\end{remark}

Motivated by the result described in Theorem \ref{thm;DeDibo vNA}, under certain conditions, we show that 
\begin{equation*}
C^\ast_{\mathcal{E}, \text{DEC}} \left(\displaystyle \dilX \mathcal{D}_p \, \dmu \right) = \left ( C^\ast_{\mathcal{E}, \text{DIAG}} \left(\displaystyle \dilX \mathcal{D}_p \, \dmu \right) \right )^ \prime.   
\end{equation*}
To prove this result, we need the following two lemmas.


\begin{lemma} \label{lem; V_alphaTV_alpha star is decomposable}
Let $\big(\Lambda, \leq \big)$ be a directed poset and $(X, \Sigma, \mu)$ be a locally measure space as mentioned in Note \ref{note; lms}. For each $p \in X$ assign a quantized domain $\big \{ \mathcal{H}_p; \mathcal{E}_p = \{\mathcal{H}_{\alpha, p}\}_{\alpha \in \Lambda}; \mathcal{D}_p \big \}$. Suppose $T \in \left (C^\ast_{\mathcal{E}, \text{DIAG}} \left(\displaystyle \dilX \mathcal{D}_p \, \dmu \right) \right)^\prime$, then the bounded operator $V_\alpha T V_\alpha^\ast$  (where, $V_\alpha : \mathcal{H}_\alpha \rightarrow \int^\oplus_{X_\alpha} \mathcal{H}_{\alpha, p} \, \mathrm{d} \mu_\alpha$ is defined as in Equation \eqref{eq;iso})  on $\int^\oplus_{X_\alpha} \mathcal{H}_{\alpha, p} \, \mathrm{d} \mu_\alpha$ is decomposable for each $\alpha \in \Lambda$.
\end{lemma}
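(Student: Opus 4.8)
The plan is to fix $\alpha \in \Lambda$ and to work on the Hilbert space $\mathcal{K}_\alpha := \int^{\oplus}_{X_\alpha} \mathcal{H}_{\alpha, p} \, \mathrm{d} \mu_\alpha$. Since $T \in C^\ast_{\mathcal{E}}\left(\displaystyle \dilX \mathcal{D}_p \, \dmu \right)$, the Hilbert space $\mathcal{H}_\alpha$ (defined in Equation \eqref{eq; H alpha}) is a reducing subspace for $T$ with $T\big|_{\mathcal{H}_\alpha} \in \mathcal{B}(\mathcal{H}_\alpha)$, and the map $V_\alpha : \mathcal{H}_\alpha \to \mathcal{K}_\alpha$ of Equation \eqref{eq;iso} is a surjective isometry by Proposition \ref{prop;dilhs}; hence $S_\alpha := V_\alpha T V_\alpha^\ast = V_\alpha \big(T\big|_{\mathcal{H}_\alpha}\big) V_\alpha^\ast$ is a well-defined bounded operator on $\mathcal{K}_\alpha$. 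By Theorem \ref{thm;DeDibo vNA}, the diagonalizable bounded operators on $\mathcal{K}_\alpha$ form an abelian von Neumann algebra whose commutant is exactly the von Neumann algebra of decomposable operators on $\mathcal{K}_\alpha$. Thus it suffices to show that $S_\alpha$ commutes with every diagonalizable bounded operator on $\mathcal{K}_\alpha$.

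Let $g \in \text{L}^\infty(X_\alpha, \mu_\alpha)$ and let $D_g := \int^{\oplus}_{X_\alpha} g(p) \cdot \mathrm{Id}_{\mathcal{H}_{\alpha, p}} \, \mathrm{d} \mu_\alpha$ be the corresponding diagonalizable operator. The key step is to realize $D_g$ as the compression of a diagonalizable locally bounded operator on the whole direct integral. Define $f : X \to \mathbb{C}$ by $f := g$ on $X_\alpha$ and $f := 0$ on $X \setminus X_\alpha$. Since $X_\alpha \in \Sigma$, the function $f$ is $\Sigma$-measurable, and for every $\beta \in \Lambda$ one has $f\big|_{X_\beta} \in \text{L}^\infty(X_\beta, \mu_\beta)$ with $\big\| f\big|_{X_\beta} \big\|_\infty \le \|g\|_\infty$. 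By the fourth item of Observations \ref{obs; 1}, $f$ determines an operator $S := \displaystyle \dilX f(p) \cdot \mathrm{Id}_{\mathcal{D}_p} \, \dmu \in C^\ast_{\mathcal{E}, \text{DIAG}}\left(\displaystyle \dilX \mathcal{D}_p \, \dmu \right)$, and since $f\big|_{X_\alpha} = g$, Equation \eqref{eq;restriction of DiagLBO} gives $V_\alpha S V_\alpha^\ast = D_g$.

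Now invoke the hypothesis: since $T$ lies in $\left(C^\ast_{\mathcal{E}, \text{DIAG}}\left(\displaystyle \dilX \mathcal{D}_p \, \dmu \right)\right)^\prime$, we have $TS = ST$ as operators on $\displaystyle \dilX \mathcal{D}_p \, \dmu$. Restricting this identity to the common reducing subspace $\mathcal{H}_\alpha$ yields $\big(T\big|_{\mathcal{H}_\alpha}\big)\big(S\big|_{\mathcal{H}_\alpha}\big) = \big(S\big|_{\mathcal{H}_\alpha}\big)\big(T\big|_{\mathcal{H}_\alpha}\big)$, and conjugating by $V_\alpha$ gives $S_\alpha D_g = D_g S_\alpha$. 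As $g \in \text{L}^\infty(X_\alpha, \mu_\alpha)$ was arbitrary, $S_\alpha$ commutes with every diagonalizable bounded operator on $\mathcal{K}_\alpha$, hence $S_\alpha = V_\alpha T V_\alpha^\ast$ is decomposable by Theorem \ref{thm;DeDibo vNA}. Since $\alpha \in \Lambda$ was arbitrary, the lemma follows.

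The only genuinely delicate point is the construction of the global diagonalizable operator $S$ from the single function $g$ on $X_\alpha$: extending by zero is legitimate here precisely because it keeps the resulting function locally essentially bounded over every branch of $\Lambda$, so that the fourth item of Observations \ref{obs; 1} applies, and this step requires no countability assumption on $\Lambda$ or on $\mu$. Everything else is routine bookkeeping with the isometries $V_\alpha$ and an appeal to the classical commutation theorem, Theorem \ref{thm;DeDibo vNA}.
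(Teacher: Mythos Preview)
Your proof is correct and follows essentially the same approach as the paper's: both reduce the claim via Theorem \ref{thm;DeDibo vNA} to showing that $V_\alpha T V_\alpha^\ast$ commutes with every diagonalizable operator on $\int^\oplus_{X_\alpha} \mathcal{H}_{\alpha,p}\,\mathrm{d}\mu_\alpha$, extend the defining function by zero to all of $X$ to obtain a global diagonalizable locally bounded operator, and then use the hypothesis $TS=ST$ together with the intertwining relations for $V_\alpha$. The only cosmetic difference is that you phrase the computation as ``restrict to $\mathcal{H}_\alpha$ and conjugate by $V_\alpha$'' while the paper carries it out explicitly on a generic vector via $V_\alpha^\ast x = u_x$.
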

\begin{proof}
Suppose $T \in \left ( C^\ast_{\mathcal{E}, \text{DIAG}} \left(\displaystyle \dilX \mathcal{D}_p \, \dmu \right) \right )^ \prime$. Consider the unitary operator $V_\alpha$ defined in Equation \eqref{eq;iso}. To show $V_\alpha T V_\alpha^\ast$ is a decomposable bounded operator on $\int^\oplus_{X_\alpha} \mathcal{H}_{\alpha, p} \, \mathrm{d} \mu_\alpha$, it is enough to prove $V_\alpha T V_\alpha^\ast$ commutes with every diagonalizable bounded operator on $\int^\oplus_{X_\alpha} \mathcal{H}_{\alpha, p} \, \mathrm{d} \mu_\alpha$ (see Theorem \ref{thm;DeDibo vNA}). Let $\int^\oplus_{X_\alpha} c_{\alpha, p} \cdot \mathrm{Id}_{\mathcal{H}_{\alpha,p}}  \, \mathrm{d} \mu_\alpha$ be an arbitrary diagonalizable bounded operator and $\int^\oplus_{X_\alpha} x(p)  \, \mathrm{d} \mu_\alpha \in \int^\oplus_{X_\alpha} \mathcal{H}_{\alpha,p}  \, \mathrm{d} \mu_\alpha$. Then 
\begin{equation*}
f(p) := \begin{cases}
c_{\alpha, p}, & \text{if} \;\; p \in X_\alpha; \\
o & \text{otherwise}.
\end{cases} 
\end{equation*}  
is a measurable function on $X$ and $u_x \in \mathcal{H}_\alpha \subseteq \displaystyle \dilX \mathcal{D}_p \, \dmu$ ($u_x$ defined as in Equation \eqref{eq; ux defined with x}). Now using the fact that $V_{\alpha}^{\ast}\left(\int^\oplus_{X_\alpha} x(p)  \, \mathrm{d} \mu_\alpha \right) = u_{x}$ and  $\left (\int^\oplus_{X_\alpha} c_{\alpha, p}  \, \mathrm{d} \mu_\alpha \right ) V_\alpha = V_\alpha \displaystyle \dilX f(p) \cdot \mathrm{Id}_{\mathcal{D}_p}  \, \mathrm{d} \mu,$ we get
\begin{align*}
\left ( V_\alpha T V^\ast_\alpha \right) \left (  \int^\oplus_{X_\alpha} c_{\alpha, p} \cdot \mathrm{Id}_{\mathcal{H}_{\alpha,p}}  \, \mathrm{d} \mu_\alpha \right ) \int^\oplus_{X_\alpha} x(p)  \, \mathrm{d} \mu_\alpha  &= \left ( V_\alpha T V^\ast_\alpha \right) \int^\oplus_{X_\alpha}  c_{\alpha, p} \cdot x(p)  \, \mathrm{d} \mu_\alpha \\
&=  V_\alpha T \left( \dilX f(p) \cdot \mathrm{Id}_{\mathcal{D}_p}  \, \mathrm{d} \mu \right) u_{x} \\
&= V_\alpha \left( \dilX f(p) \cdot \mathrm{Id}_{\mathcal{D}_p}  \, \mathrm{d} \mu \right) T u_{x} \\
&=  \left(\int^\oplus_{X_\alpha} c_{\alpha, p} \cdot \mathrm{Id}_{\mathcal{H}_{\alpha,p}} \right) \left(V_{\alpha}TV_{\alpha}^{\ast} \right) \int^\oplus_{X_\alpha} x(p)  \, \mathrm{d} \mu_\alpha.\\ 
\end{align*}
Therefore, $V_\alpha T V_\alpha^\ast$ is decomposable for every $\alpha \in \Lambda$.
\end{proof}

\begin{lemma}
Following the notions of Lemma \ref{lem; V_alphaTV_alpha star is decomposable}, if $T \in C^\ast_\mathcal{E} \left ( \displaystyle \dilX \mathcal{D}_p \, \dmu  \right )$, then the family $\big \{ V_\alpha T V^\ast_\alpha \big \}_{\alpha \in \Lambda}$ 
 of bounded operators satisfies the following relation,
\begin{equation*}
V_\alpha T^n V_\alpha^\ast = \big ( V_\alpha J^\ast_{\beta, \alpha} V_\beta^\ast \big ) \big ( V_\beta T^n V_\beta^\ast \big ) \big ( V_\beta J_{\beta, \alpha} V_\alpha^\ast \big ),
\end{equation*}
\text{whenever}\; $\alpha \leq \beta$. Here $J_{\beta, \alpha}\colon \mathcal{H}_{\alpha} \to \mathcal{H}_{\beta}$ is the inclusion map (for $\alpha \leq \beta).$
\end{lemma}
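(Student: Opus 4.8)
The identity to prove is a compatibility relation among the conjugated operators $V_\alpha T^n V_\alpha^\ast$ for $\alpha \le \beta$. The natural strategy is to first understand how the maps $V_\alpha$, $V_\beta$ and the inclusion $J_{\beta,\alpha}\colon \mathcal{H}_\alpha \to \mathcal{H}_\beta$ fit together, and then reduce everything to the single operator-theoretic fact that $\mathcal{H}_\alpha$ is a reducing subspace of $\mathcal{H}_\beta$ for $T|_{\mathcal{H}_\beta}$ (this is Remark \ref{rem;lbo}, since $T \in C^\ast_\mathcal{E}(\displaystyle\dilX \mathcal{D}_p\,\dmu)$). Concretely, write $Q_\alpha \colon \mathcal{H}_\beta \to \mathcal{H}_\alpha$ for the orthogonal projection; then $J_{\beta,\alpha}^\ast = Q_\alpha$ on $\mathcal{H}_\beta$, and the reducing property gives $Q_\alpha (T|_{\mathcal{H}_\beta})^n Q_\alpha = (T|_{\mathcal{H}_\alpha})^n Q_\alpha$ and, by taking adjoints, $Q_\alpha (T|_{\mathcal{H}_\beta})^n J_{\beta,\alpha} = (T|_{\mathcal{H}_\alpha})^n$ on $\mathcal{H}_\alpha$.

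First I would record that each $V_\alpha\colon \mathcal{H}_\alpha \to \int^\oplus_{X_\alpha}\mathcal{H}_{\alpha,p}\,\mathrm{d}\mu_\alpha$ is a surjective isometry, hence unitary, so $V_\alpha^\ast V_\alpha = \mathrm{Id}_{\mathcal{H}_\alpha}$ and $V_\alpha V_\alpha^\ast = \mathrm{Id}$ on the target direct integral (established in the proof of Proposition \ref{prop;dilhs}). Next I would observe that $V_\beta J_{\beta,\alpha} V_\alpha^\ast$ is a partial isometry from $\int^\oplus_{X_\alpha}\mathcal{H}_{\alpha,p}\,\mathrm{d}\mu_\alpha$ into $\int^\oplus_{X_\beta}\mathcal{H}_{\beta,p}\,\mathrm{d}\mu_\beta$ whose adjoint is exactly $V_\alpha J_{\beta,\alpha}^\ast V_\beta^\ast$, using $(V_\beta J_{\beta,\alpha} V_\alpha^\ast)^\ast = V_\alpha J_{\beta,\alpha}^\ast V_\beta^\ast$. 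Then the computation is a direct unwinding: starting from the right-hand side,
\begin{align*}
\big(V_\alpha J_{\beta,\alpha}^\ast V_\beta^\ast\big)\big(V_\beta T^n V_\beta^\ast\big)\big(V_\beta J_{\beta,\alpha} V_\alpha^\ast\big)
&= V_\alpha J_{\beta,\alpha}^\ast \big(V_\beta^\ast V_\beta\big) T^n \big(V_\beta^\ast V_\beta\big) J_{\beta,\alpha} V_\alpha^\ast \\
&= V_\alpha \big(J_{\beta,\alpha}^\ast\, (T|_{\mathcal{H}_\beta})^n\, J_{\beta,\alpha}\big) V_\alpha^\ast,
\end{align*}
and then $J_{\beta,\alpha}^\ast (T|_{\mathcal{H}_\beta})^n J_{\beta,\alpha} = Q_\alpha (T|_{\mathcal{H}_\beta})^n J_{\beta,\alpha} = (T|_{\mathcal{H}_\alpha})^n$ on $\mathcal{H}_\alpha$ by the reducing-subspace identity noted above, giving $V_\alpha T^n V_\alpha^\ast$. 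Here one uses that $T^n|_{\mathcal{H}_\beta} = (T|_{\mathcal{H}_\beta})^n$ and $T^n|_{\mathcal{H}_\alpha} = (T|_{\mathcal{H}_\alpha})^n$, which hold because each $\mathcal{H}_\gamma$ is invariant under $T$ for $T \in C^\ast_\mathcal{E}$.

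**Main obstacle.** The only genuinely delicate point is justifying that $J_{\beta,\alpha}^\ast$, the Hilbert-space adjoint of the isometric inclusion, really acts as the orthogonal projection $Q_\alpha$ onto $\mathcal{H}_\alpha$ inside $\mathcal{H}_\beta$, and that $\mathcal{H}_\alpha$ is reducing (not merely invariant) for $T|_{\mathcal{H}_\beta}$ — this is where $T \in C^\ast_\mathcal{E}$ (as opposed to merely $C_\mathcal{E}$) is essential, via the condition $Q_\alpha T \subseteq T P_\alpha$ recorded before Definition \ref{def;lbo} and spelled out in Remark \ref{rem;lbo}(2) as $T^\ast_\beta|_{\mathcal{H}_\alpha} = T^\ast_\alpha$. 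Everything else is a formal manipulation of unitaries and partial isometries, so I would present the adjoint/reducing facts carefully as a preliminary paragraph and then let the displayed chain of equalities finish the proof; the induction on $n$ (or the observation that $T^n$ is again locally bounded with the same reducing subspaces) handles the power.
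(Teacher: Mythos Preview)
Your proposal is correct and follows essentially the same approach as the paper: both arguments insert $V_\beta^\ast V_\beta = \mathrm{Id}_{\mathcal{H}_\beta}$ and reduce the identity to $J_{\beta,\alpha}^\ast\, T|_{\mathcal{H}_\beta}\, J_{\beta,\alpha} = T|_{\mathcal{H}_\alpha}$, which holds because $T(\mathcal{H}_\alpha)\subseteq\mathcal{H}_\alpha$. The only cosmetic differences are that the paper works from the left-hand side toward the right (pointwise on a vector $x$) while you simplify the right-hand side algebraically, and the paper states the $n=1$ case explicitly, leaving the passage to $T^n$ implicit just as you do. One minor remark: $V_\beta J_{\beta,\alpha} V_\alpha^\ast$ is in fact an isometry (a composition of isometries), not merely a partial isometry, and only invariance of $\mathcal{H}_\alpha$ under $T$ is needed here, not the full reducing property---but neither point affects the validity of your argument.
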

\begin{proof}
Recall that $\displaystyle \dilX \mathcal{D}_p \, \dmu = \varinjlim\limits_{\alpha \in \Lambda} \mathcal{H}_\alpha$, where $\mathcal{H}_
\alpha$ is defined as in Equation \eqref{eq; H alpha}. Let $x \in \int^\oplus_{X_\alpha} \mathcal{H}_{\alpha, p} \, \mathrm{d} \mu_\alpha$ and $\alpha \leq \beta$. By using the definitions of $V_\alpha$ and $V_\beta$ (see Equation \eqref{eq;iso}), we have 
\begin{equation} \label{eq; V beta J beta alpha v alpha star}
V_\beta J_{\beta, \alpha} V_\alpha^\ast(x) = \begin{cases}
x(p), & \text{if} \;\; p \in X_\alpha; \\
0_{\mathcal{H}_{\beta, p}} & \text{if} \;\; p \in X_\beta \setminus X_\alpha.
\end{cases}
\end{equation}
Now as $V_\alpha$, $V_\beta$ are unitary operators and $J_{\beta, \alpha}$ is an inclusion map, we get that $V_\beta J_{\beta, \alpha} V_\alpha^\ast$ is an isometry. Now by using the fact that $T$ is a locally bounded operator and $V_\beta$ is unitary, we obtain
\begin{align*}
\big ( V_\alpha T V_\alpha^\ast \big ) \big  (x \big ) = \big ( V_\alpha T \big ) \big ( V_\alpha^\ast x \big ) 
&= \big ( V_\alpha T J_{\beta, \alpha} \big ) \big ( V_\alpha^\ast x \big ) \\
&= \big ( V_\alpha J^\ast_{\beta, \alpha} T J_{\beta, \alpha} V_\alpha^\ast \big ) \big ( x \big ) \\
&= \big ( V_\alpha J^\ast_{\beta, \alpha} V_\beta^\ast \big ) \big ( V_\beta T V_\beta^\ast \big ) \big ( V_\beta J_{\beta, \alpha} V_\alpha^\ast \big ) \big ( x \big ).
\end{align*}
Since $x \in \int^\oplus_{X_\alpha} \mathcal{H}_{\alpha, p} \, \mathrm{d} \mu_\alpha$ was arbitrarily chosen, we get 
\begin{equation} \label{eq; dilation}
V_\alpha T V_\alpha^\ast = \big ( V_\alpha J^\ast_{\beta, \alpha} V_\beta^\ast \big ) \big ( V_\beta T V_\beta^\ast \big ) \big ( V_\beta J_{\beta, \alpha} V_\alpha^\ast \big ).
\end{equation}
Equivalently, the following diagram commutes.
\begin{center}
\begin{tikzcd}[sep=huge]
& \int^\oplus_{X_\beta} \mathcal{H}_{\beta, p} \, \mathrm{d} \mu_\beta \arrow[rr, "V_\beta T V_\beta^\ast"]  & & \int^\oplus_{X_\beta} \mathcal{H}_{\beta, p} \, \mathrm{d} \mu_\beta \\
& & \mathcal{H}_\beta  \arrow[ul, dotted, "V_\beta"] \arrow[ur, dotted, "V_\beta"']  & \\
& & \mathcal{H}_\alpha \arrow[u, dotted, "J_{\beta, \alpha}"] & \\
& \int^\oplus_{X_\alpha} \mathcal{H}_{\alpha, p} \, \mathrm{d} \mu_\alpha \arrow[ur, dotted, "V_\alpha^\ast"]  \arrow[uuu, "V_\beta J_{\beta, \alpha}V^\ast_\alpha"] \arrow[rr, "V_\alpha T V_\alpha^\ast"'] & &  \int^\oplus_{X_\alpha} \mathcal{H}_{\alpha, p} \, \mathrm{d} \mu_\alpha \arrow[ul, dotted, "V_\alpha^\ast"'] \arrow[uuu, "V_\beta J_{\beta, \alpha}V^\ast_\alpha"']
\end{tikzcd}
\end{center}
\end{proof}

\begin{theorem} \label{thm; M DEC = M DIAG Commutant}
Let $(\Lambda, \leq)$ be a directed poset and $(X, \Sigma, \mu)$ be a locally measure space (see Note \ref{note; lms}). For each $p \in X$ assign a quantized domain $\big \{ \mathcal{H}_p; \mathcal{E}_p = \{\mathcal{H}_{\alpha, p}\}_{\alpha \in \Lambda}; \mathcal{D}_p \big \}$. Suppose either $\Lambda$ is a countable set or $\mu$ is a countable measure on $X$, then 
\begin{equation*}
C^\ast_{\mathcal{E}, \text{DEC}} \left(\displaystyle \dilX \mathcal{D}_p \, \dmu \right) = \left (C^\ast_{\mathcal{E}, \text{DIAG}} \left(\displaystyle \dilX \mathcal{D}_p \, \dmu \right)\right)^\prime.
\end{equation*}
\end{theorem}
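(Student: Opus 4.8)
The plan is to prove the two inclusions separately. One inclusion, namely $C^\ast_{\mathcal{E}, \text{DEC}}\left(\dilX \mathcal{D}_p\,\dmu\right) \subseteq \left(C^\ast_{\mathcal{E}, \text{DIAG}}\left(\dilX \mathcal{D}_p\,\dmu\right)\right)^\prime$, was already established in Remark \ref{rem; commutants containments} and needs no further argument. So the substance of the proof is the reverse inclusion: given $T \in \left(C^\ast_{\mathcal{E}, \text{DIAG}}\left(\dilX \mathcal{D}_p\,\dmu\right)\right)^\prime$, we must produce a family $\{T_p \in C^\ast_{\mathcal{E}_p}(\mathcal{D}_p)\}_{p \in X}$ of locally bounded operators with $(Tu)(p) = T_p u(p)$ for $\mu$-a.e. $p$, for every $u$.

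The first step is to pass to each level $\alpha \in \Lambda$. By Lemma \ref{lem; V_alphaTV_alpha star is decomposable}, the bounded operator $V_\alpha T V_\alpha^\ast$ on $\int^\oplus_{X_\alpha}\mathcal{H}_{\alpha, p}\,\mathrm{d}\mu_\alpha$ is decomposable, so by Theorem \ref{thm;DeDibo vNA} there is a $\mu_\alpha$-a.e.\ unique family $\{S_{\alpha, p} \in \mathcal{B}(\mathcal{H}_{\alpha, p})\}_{p \in X_\alpha}$ with $V_\alpha T V_\alpha^\ast = \int^\oplus_{X_\alpha} S_{\alpha, p}\,\mathrm{d}\mu_\alpha$. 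The second step is to check compatibility of these families across levels: for $\alpha \leq \beta$, using the commuting diagram \eqref{eq; dilation} (with $n = 1$) relating $V_\alpha T V_\alpha^\ast$ to $V_\beta T V_\beta^\ast$ via $V_\beta J_{\beta,\alpha}V_\alpha^\ast$, together with the explicit form \eqref{eq; V beta J beta alpha v alpha star} of that isometry, one deduces that $S_{\beta, p}\big|_{\mathcal{H}_{\alpha, p}} = S_{\alpha, p}$ for $\mu_\alpha$-a.e.\ $p \in X_\alpha$, and moreover that $\mathcal{H}_{\alpha, p}$ is a reducing subspace for $S_{\beta, p}$ (the latter because $V_\alpha T V_\alpha^\ast$ itself must carry the subspaces $\mathcal{H}^{(\delta)}_{\alpha}$ to themselves, as $T$ is locally bounded). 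This is essentially the same bookkeeping as in Claim 3 of the proof of Theorem \ref{thm;DEC and DIAG LvNA}.

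The third step is to assemble the pointwise operators. For $\mu$-a.e.\ fixed $p \in X$, the families $\{S_{\alpha, p}\}_{\alpha : p \in X_\alpha}$ form a coherent system in the sense of Remark \ref{rem;lbo}: $S_{\beta, p}\big|_{\mathcal{H}_{\alpha, p}} = S_{\alpha, p}$ whenever $\alpha \leq \beta$ and $p \in X_\alpha$, each $S_{\alpha, p}$ is bounded, and $\mathcal{H}_{\alpha, p}$ reduces $S_{\beta, p}$. By Equation \eqref{eqn; locally bounded operator on D} and the projective-limit description \eqref{eq; inverese limit of bounded operators}, this coherent system defines a locally bounded operator $T_p := \varprojlim_{\alpha} S_{\alpha, p} \in C^\ast_{\mathcal{E}_p}(\mathcal{D}_p)$. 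Exactly as in Claim 3, the hypothesis that $\Lambda$ is countable or $\mu$ is a counting measure is used to remove a single $\mu$-null set $E$ outside of which all the a.e.-statements hold simultaneously (a countable union of null sets, or the empty set, respectively). Then for any $u \in \dilX \mathcal{D}_p\,\dmu$, choosing $\alpha$ with $u \in \mathcal{H}_\alpha$, we get $(Tu)(p) = (V_\alpha T V_\alpha^\ast)(V_\alpha u)(p) = S_{\alpha, p}u(p) = T_p u(p)$ for $\mu_\alpha$-a.e.\ $p \in X_\alpha$, hence for $\mu$-a.e.\ $p \in X$ since $u$ vanishes off $X_\alpha$. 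Thus $T = \dilX T_p\,\dmu \in C^\ast_{\mathcal{E}, \text{DEC}}\left(\dilX \mathcal{D}_p\,\dmu\right)$.

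The main obstacle I anticipate is the compatibility/coherence step: one must be careful that the $\mu_\alpha$-a.e.\ defined families $\{S_{\alpha, p}\}$ can be chosen consistently so that, after discarding one $\mu$-null set, the relations $S_{\beta, p}\big|_{\mathcal{H}_{\alpha, p}} = S_{\alpha, p}$ hold for \emph{all} comparable pairs and \emph{all} surviving $p$ at once — this is precisely where countability of $\Lambda$ (or the counting-measure hypothesis) is indispensable, and it mirrors the argument already carried out in Claim 3 of Theorem \ref{thm;DEC and DIAG LvNA}. A secondary point requiring care is verifying that the limit operator $T_p$ genuinely lies in $C^\ast_{\mathcal{E}_p}(\mathcal{D}_p)$ rather than merely in $C_{\mathcal{E}_p}(\mathcal{D}_p)$, i.e.\ that both $T_p$ and its adjoint preserve each $\mathcal{H}_{\alpha, p}$; this follows because $T^\ast \in \left(C^\ast_{\mathcal{E}, \text{DIAG}}\left(\dilX \mathcal{D}_p\,\dmu\right)\right)^\prime$ as well (the commutant is a $\ast$-algebra), so the same construction applied to $T^\ast$ yields the adjoint family, and uniqueness forces $(T_p)^\ast = (T^\ast)_p$.
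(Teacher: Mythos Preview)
Your proposal is correct and follows essentially the same approach as the paper's proof: invoke Lemma~\ref{lem; V_alphaTV_alpha star is decomposable} to decompose each $V_\alpha T V_\alpha^\ast$, use the dilation relation~\eqref{eq; dilation} to establish the pointwise compatibility $S_{\beta,p}\big|_{\mathcal{H}_{\alpha,p}} = S_{\alpha,p}$ for $\mu_\alpha$-a.e.\ $p$, manage the resulting null sets exactly as in Claim~3 of Theorem~\ref{thm;DEC and DIAG LvNA} under the countability hypothesis, and assemble $T_p$ as the projective limit. Your closing remark about verifying $T_p \in C^\ast_{\mathcal{E}_p}(\mathcal{D}_p)$ (rather than merely $C_{\mathcal{E}_p}(\mathcal{D}_p)$) by applying the same construction to $T^\ast$ is a point the paper passes over in silence, so your treatment is in fact slightly more careful there.
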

\begin{proof}
From Remark \ref{rem; commutants containments}, we know that $C^\ast_{\mathcal{E}, \text{DEC}} \left(\displaystyle \dilX \mathcal{D}_p \, \dmu \right) \subseteq \left (C^\ast_{\mathcal{E}, \text{DIAG}} \left(\displaystyle \dilX \mathcal{D}_p \, \dmu \right) \right)^\prime$. Thus, it is enough to show the other containment. Suppose $T \in \left (C^\ast_{\mathcal{E}, \text{DIAG}} \left(\displaystyle \dilX \mathcal{D}_p \, \dmu \right) \right)^\prime$, then by Lemma \ref{lem; V_alphaTV_alpha star is decomposable}, $V_\alpha T V_\alpha^\ast$ is a decomposable operator on $\int^\oplus_{X_\alpha} \mathcal{H}_{\alpha, p} \, \mathrm{d} \mu_\alpha$ for each $\alpha \in \Lambda$. Following (\ref{def;Decbo}) of Definition \ref{def;Debo}, for each $\alpha \in \Lambda$, there is a family $\big \{ S_{\alpha, p} \in \mathcal{B} \big ( \mathcal{H}_{\alpha, p} \big ) \big \}_{p \in X_\alpha}$  of bounded operators such that 
\begin{equation} \label{eq; V alpha T V alpha star}
V_\alpha T V_\alpha^\ast = \int^\oplus_{X_\alpha} S_{\alpha, p} \, \mathrm{d} \mu_\alpha.
\end{equation}
By following the same procedure for $\alpha \leq \beta$, we get $V_\beta T V_\beta^\ast = \int^\oplus_{X_\beta} S_{\beta, p} \, \mathrm{d} \mu_\beta$. Now, we claim that $S_{\beta, p} \big |_{\mathcal{H}_{\alpha, p}} = S_{\alpha, p}$ for a.e. $p \in X_\alpha$, whenever $\alpha \leq \beta$. Suppose there exists a measurable set $E_{\alpha, \beta} \subseteq X_\alpha$ such that $0 < \mu_\alpha(E_{\alpha, \beta}) < \infty$ (use $\sigma$-compactness of $X_\alpha$) and $S_{\beta, p} \big |_{\mathcal{H}_{\alpha, p}} \neq S_{\alpha, p}$ for every $p \in E_{\alpha, \beta}$. This implies that there exists a family $\big \{ \xi_{\alpha, p} \in \mathcal{H}_{\alpha, p} \big \}_{p \in E_{\alpha, \beta}}$ of vectors such that 
\begin{equation*}
S_{\beta, p}(\xi_{\alpha, p}) \neq S_{\alpha, p}(\xi_{\alpha, p}) \; \; \; \text{for every} \; \; p \in E_{\alpha, \beta}.
\end{equation*}
Without loss of generality, we assume that the family $\big \{ \xi_{\alpha, p}  \in \mathcal{H}_{\alpha, p} \big \}_{p \in E_{\alpha, \beta}}$ consists of unit vectors.
Now consider a family $\big \{ \hat{\xi}_{\alpha, p} \in \mathcal{H}_{\alpha, p} \big \}_{p \in X_\alpha}$ of vectors, where
\begin{equation*}
\hat{\xi}_{\alpha, p} = \begin{cases}
\xi_{\alpha, p}, & \text{if} \;\; p \in E_{\alpha, \beta}; \\
0_{\mathcal{H}_{\alpha, p}} & \text{if} \;\; p \in X_\alpha \setminus E_{\alpha, \beta}.
\end{cases}
\end{equation*}
Consider a map $x : X_\alpha \rightarrow \bigcup\limits_{p \in X_\alpha} \mathcal{H}_{\alpha, p}$ given by $x(p) := \hat{\xi}_{\alpha, p}$ for every $p \in X_\alpha$. Now, we use (2) of Definition \ref{def;dihs} to show $x \in \int^\oplus_{X_\alpha} \mathcal{H}_{\alpha, p} \, \mathrm{d} \mu_\alpha$. For any $y \in \int^\oplus_{X_\alpha} \mathcal{H}_{\alpha, p} \, \mathrm{d} \mu_\alpha$, the map $p \mapsto \|y(p)\|$ is in $\text{L}^{2}(X_{\alpha}, \mu_{\alpha})$ and
\begin{equation*}
\int_{X_\alpha} \; \big | \la x(p), y(p)  \ra \big| \; \mathrm{d} \mu_\alpha(p) = \int\limits_{E_{\alpha, \beta}} \; \big | \la \xi_{\alpha, p}, y(p)  \ra \big| \; \mathrm{d} \mu_\alpha(p) 
\leq \int\limits_{E_{\alpha, \beta}} \; \big \| y(p) \big \|  \; \mathrm{d} \mu_\alpha(p) < \infty.
\end{equation*}
The last inequality holds true as the map $p \mapsto \big \| 
y(p) \big \|_{\mathcal{H}_{{\alpha_u}, p}}$ when restricted to $E_{\alpha, \beta}$ is in $\text{L}^1(X_{\alpha}, \mu_{\alpha})$, as one may see that $\mu_{\alpha}(E_{\alpha, \beta}) < \infty$. This shows that $x \in \int^\oplus_{X_\alpha} \mathcal{H}_{\alpha, p} \, \mathrm{d} \mu_\alpha$.
Finally, by using Equation \eqref{eq; dilation} and Equation \eqref{eq; V beta J beta alpha v alpha star}, we get
\begin{align*}
\int^\oplus_{X_\alpha} S_{\alpha, p} x(p) \, \mathrm{d} \mu_\alpha &= \big ( V_\alpha T V_\alpha^\ast \big ) (x)\\
&= \big ( V_\alpha J^\ast_{\beta, \alpha} V_\beta^\ast \big ) \big ( V_\beta T V_\beta^\ast \big ) \big ( V_\beta J_{\beta, \alpha} V_\alpha^\ast \big ) (x)\\
&= \big ( V_\alpha J^\ast_{\beta, \alpha} V_\beta^\ast \big ) \big ( V_\beta T V_\beta^\ast \big ) \left ( \int^\oplus_{X_\beta} \big ( V_\beta J_{\beta, \alpha} V_\alpha^\ast \big ) (x) (p) \, \mathrm{d} \mu_\beta \right) \\
&= \big ( V_\alpha J^\ast_{\beta, \alpha} V_\beta^\ast \big ) \left ( \int^\oplus_{X_\beta} S_{\beta, p} \big ( V_\beta J_{\beta, \alpha} V_\alpha^\ast \big ) (x) (p) \, \mathrm{d} \mu_\beta \right) \\
&= \int^\oplus_{X_\alpha} S_{\beta, p} \big ( V_\beta J_{\beta, \alpha} V_\alpha^\ast \big ) (x) (p) \, \mathrm{d} \mu_\alpha \\
&= \int^\oplus_{X_\alpha} S_{\beta, p} x(p) \, \mathrm{d} \mu_\alpha.
\end{align*}
Thus $S_{\beta, p} x(p) = S_{\alpha, p} x(p)$ for a.e. $p \in X_\alpha$. In particular, for a.e. $p \in E_{\alpha, \beta}$, we have
\begin{equation*}
S_{\beta, p}(\xi_{\alpha, p}) = S_{\beta, p}(\hat{\xi}_{\alpha, p}) = S_{\beta, p}(x(p)) =  S_{\alpha, p}(x(p)) = S_{\alpha, p}(\hat{\xi}_{\alpha, p}) = S_{\alpha, p}(\xi_{\alpha, p}).
\end{equation*}
This is a contradiction. This implies that $\mu_\alpha(E_{\alpha, \beta}) = 0$ and hence for a.e. $p \in X_\alpha$
\begin{equation*}
S_{\beta, p} \big |_{\mathcal{H}_{\alpha, p}} = S_{\alpha, p}, \; \; \; \text{whenever} \; \; \alpha \leq \beta.
\end{equation*}

Let $E_\alpha := \bigcup\limits_{\beta \in \Lambda, \alpha \leq \beta} E_{\alpha, \beta}$ and $E :=  \bigcup\limits_{\alpha \in \Lambda} E_{\alpha}$. Since $\Lambda$ is countable, $\mu_\alpha(E_\alpha) = 0$ for each $\alpha$ and $E$ is a measurable set. Moreover, from Proposition \ref{prop;m}, we get $\mu(E) = 0$. On the other hand, if $\mu$ is a counting measure on $X$, then for any $\alpha \leq \beta$, the set $E_{\alpha, \beta} = \emptyset$ and thus $E = \emptyset$. So in both cases (that is, when $\Lambda$ is countable or $\mu$ is a counting measure on $X$), we consider the set $X \setminus E$. Without loss of generality we again denote it by $X$. Then consider the family $\big \{ T_{\alpha, p} \; : \; \alpha \in \Lambda, \, p \in X \big\}$ of bounded operators defined by 
\begin{equation} \label{eq; T alpha p}
T_{\alpha, p} := \begin{cases}
S_{\alpha, p}, & \text{if} \;\; p \in X_\alpha; \\
S_{\beta, p}\big|_{\mathcal{H}_{\alpha, p}} & \text{if} \;\; p \in X\setminus X_\alpha;
\end{cases}
\end{equation}
for any $\beta \in \Lambda$ such $\alpha \leq \beta$ and $p \in X_\beta$. Here, note that the choice of $\beta \in \Lambda$  does not make any difference in the definition of $T_{\alpha, p}$, as we have $S_{\beta, p}\big|_{\mathcal{H}_{\alpha, p}} = S_{\gamma, p}\big|_{\mathcal{H}_{\alpha, p}}$, whenever $\alpha \leq \beta, \gamma$  and $p \in X_\beta \cap X_\gamma$. Thus, for each fixed $p \in X$, the family $\big \{  T_{\alpha, p}  \big \}_{\alpha \in \Lambda}$ is such that $T_{\beta, p} \big |_{\mathcal{H}_{\alpha, p}} = T_{\alpha, p}$, whenever $\alpha \leq \beta$. This yields a locally bounded operator $T_p : \mathcal{D}_p \rightarrow \mathcal{D}_p$ (see Equation \eqref{eq; inverese limit of bounded operators}) given by
\begin{align*} 
T_p := \varprojlim\limits_{\alpha \in \Lambda} T_{\alpha, p}.
\end{align*}
Finally, we show that $T = \displaystyle \dilX T_p \, \dmu$. Let $u \in \displaystyle \dilX \mathcal{D}_p \, \dmu$. Then $u \in \mathcal{H}_\alpha$ for some $\alpha \in \Lambda$. This implies $Tu \in \mathcal{H}_\alpha$. By following the definition of $V_\alpha$,  Equation \eqref{eq; V alpha T V alpha star} and Equation \eqref{eq; T alpha p}, we get
\begin{equation*}
\big (Tu \big )(p) = \big (V_\alpha T u \big )(p) = \big (V_\alpha T V^\ast_\alpha \big )\big (V_\alpha u \big )(p) = S_{\alpha, p} u(p) =  T_{\alpha, p} u(p) = T_p u(p), \; \; \; \; \text{a.e.} \; \; p \in X_\alpha
\end{equation*}
and for $p \in X \setminus X_\alpha$, we have $u(p) = 0_{\mathcal{D}_p} = Tu(p) = T_pu(p)$. Since $u \in \displaystyle \dilX \mathcal{D}_p \, \dmu$ was chosen arbitrarily, by following  (\ref{def;Dec(lbo)}) of Definition \ref{def;DecDiag(lbo)}, we obtain 
$$T = \displaystyle \dilX T_p \, \dmu.$$ Thus $T \in C^\ast_{\mathcal{E}, \text{DEC}} \left(\displaystyle \dilX \mathcal{D}_p \, \dmu \right)$.  This proves the result. 
\end{proof}

\subsection*{Acknowledgment}
The first named author kindly acknowledges the financial support received as an Institute postdoctoral fellowship from the Indian Institute of Science Education and Research Mohali. The second named author would like to thank SERB (India) for a financial support in the form of a Startup Research Grant (File No. SRG/2022/001795). The authors express their sincere thanks to DST for a financial support in the form of the FIST grant (File No. SR/FST/MS-I/2019/46(C)) and the Department of Mathematical Sciences, IISER Mohali, for providing the necessary facilities to carry out this work.

\subsection*{Declaration} The authors declare that there are no conflicts of interest.

\bibliographystyle{plain}

\end{document}